\numberwithin{equation}{section}
\numberwithin{figure}{section}
\newtheorem{theorem}{Theorem}[section]
\newtheorem{lemma}[theorem]{Lemma}
\newtheorem{assumption}[theorem]{Assumption}
\newtheorem{proposition}[theorem]{Proposition}
\newtheorem{remark}{Remark}[section]
\newcommand{\x}{{{x}}}
\newcommand{\xx}{{y}}
\newcommand{\n}{{n}}
\renewcommand{\d}{\mathop{}\!\mathrm{d}}
\newcommand{\D}{\Omega}
\newcommand{\DC}{\widetilde{\D}}
\newcommand{\DI}{\D_{\delta}}
\newcommand{\DD}{\D\cup\DI}
\newcommand{\R}{\mathbb{R}}
\newcommand{\N}{\mathbb{N}}
\newcommand{\kernel}{\gamma}
\newcommand{\Vd}{V_{\delta}}
\newcommand{\q}{q} 
\newcommand{\hq}{h} 
\newcommand{\hu}{hu} 
\newcommand{\hs}{h_s} 
\newcommand{\hd}{h_{\delta}} 
\newcommand{\mcR}{\mathcal{R}}
\newcommand{\mcX}{\mathcal{X}}
\newcommand{\mcL}{\mathcal{L}}
\newcommand{\mbR}{\mathbb{R}}
\newcommand{\mbRn}{{\mathbb{R}^n}}
\newcommand{\gam}{{\gamma}}
\newcommand{\ud}{{u_d}}
\newcommand{\Q}{{Q}}
\newcommand*{\HD}[1]{{H_{\D}^{#1}(\R^n)}}
\renewcommand{\S}{S}
\newcommand*{\norms}[1]{{\left|{#1}\right|}}
\newcommand*{\norm}[1]{{\left\lVert#1\right\rVert}}
\newcommand*{\norml}[1]{{\left\lVert#1\right\rVert}_{L^2(\D)}}
\newcommand{\abs}[1]{\left|#1\right|}
\begin{document}
\title[]{An optimization-based approach to parameter learning for fractional type nonlocal models}
\author{Olena Burkovska${}^{1}$ \and Christian Glusa${}^{2}$ \and Marta D'Elia${}^{3}$}
\address{${}^1$ Computer Science and Mathematics Division, Oak Ridge National Laboratory, USA}
\email{burkovskao@ornl.gov}

\address{${}^2$ Center for Computing Research, Sandia National Laboratories, NM, USA}
\email{caglusa@sandia.gov}
\address{${}^3$ Computational Science and Analysis, Sandia National Laboratories, CA, USA}
\email{mdelia@sandia.gov}

\keywords{Identification problem, optimal control, nonlocal diffusion, fractional Laplacian, affine approximation}
\subjclass[]{}

\thanks{
This manuscript has been authored in part by UT-Battelle, LLC, under contract DE-AC05-00OR22725 with the US Department of Energy (DOE) and 
supported by the U.S. Department of Energy, Office of Advanced Scientific Computing Research, Applied Mathematics Program under the award numbers ERKJ345 and ERKJE45. The US government retains and the publisher, by accepting the article for publication, acknowledges that the US government retains a nonexclusive, paid-up, irrevocable, worldwide license to publish or reproduce the published form of this manuscript, or allow others to do so, for US government purposes. DOE will provide public access to these results of federally sponsored research in accordance with the DOE Public Access Plan (\texttt{http://energy.gov/downloads/doe-public-access-plan}).\\
This work was partially supported by the Sandia National Laboratories (SNL) Laboratory-directed Research and Development program and by the U.S. Department of Energy, Office of Advanced Scientific Computing Research under the Collaboratory on Mathematics and Physics-Informed Learning Machines for Multiscale and Multiphysics Problems (PhILMs) project. SNL is a multimission laboratory managed and operated by National Technology and Engineering Solutions of Sandia, LLC., a wholly owned subsidiary of Honeywell International, Inc., for the U.S. Department of Energy's National Nuclear Security Administration under contract DE-NA-0003525. This paper, SAND2020-10989, describes objective technical results and analysis. Any subjective views or opinions that might be expressed in this paper do not necessarily represent the views of the U.S. Department of Energy or the United States Government.
}

\begin{abstract}
Nonlocal operators of fractional type are a popular modeling choice for applications that do not adhere to classical diffusive behavior; however, one major challenge in nonlocal simulations is the selection of model parameters. In this work we propose an optimization-based approach to parameter identification for fractional models with an optional truncation radius. 
We formulate the inference problem as an optimal control problem where the objective is to minimize the discrepancy between observed data and an approximate solution of the model, and the control variables are the fractional order and the truncation length.
For the numerical solution of the minimization problem we propose a gradient-based approach, where we enhance the numerical performance by an approximation of the bilinear form of the state equation and its derivative with respect to the fractional order. 
Several numerical tests in one and two dimensions illustrate the theoretical results and show the robustness and applicability of our method.
\end{abstract}

\maketitle
\section{Introduction}\label{sec:intro}
Nonlocal models provide an improved predictive capability for several scientific and engineering applications thanks to their ability to describe effects that classical partial differential equations (PDEs) fail to capture. These effects include multiscale and anomalous behavior such as super- and sub-diffusion in, e.g., subsurface modeling \cite{Benson2000,Meerschaert2006,Schumer2001,Schumer2003} or sharp interface dynamics in phase field models~\cite{Burkovska2020CH,DuYang2016,DuJuLiQiao2018}. Furthermore, applications in material science, corrosion, turbulent flow, and geoscience exhibit hierarchical features that do not adhere to classical diffusive behavior. Fractional order models are one of the most common instances of the nonlocal models. Through fractional exponent derivatives, as opposed to integer ones in the case of PDEs, fractional order operators can accurately represent multiscale complex phenomena by incorporating long range interactions in the model itself in the form of an integral operator acting over the whole space. 

Given a scalar quantity $u:\mbRn\to\mbR$, the simplest form of a fractional operator is the fractional Laplacian; in its integral form, its action on $u$ is defined as \cite{Gorenflo1997}
\begin{equation}\label{eq:fractional-laplacian}
(-\Delta)^s u(x) = C_{n,s}\int_\mbRn \frac{u(x)-u(y)}{|x-y|^{n+2s}} dy,
\end{equation}
where $s\in(0,1)$ is the {\it fractional order} and $C_{n,s}$ is a constant that depends on the dimension $n$ and $s$:
\begin{equation}\label{eq:FL_constant}
C_{n,s}=\frac{2^{2s}s\Gamma(s+n/2)}{\pi^{n/2}\Gamma(1-s)}.
\end{equation}
Here, and in what follows, the integral in~\eqref{eq:fractional-laplacian} should be understood in the Cauchy principal value sense for $s\in [1/2, 1)$. It follows from \eqref{eq:fractional-laplacian} that in fractional modeling the state of a system at a point depends on the value of the state at any other point in the space; in other words, fractional models are nonlocal. Specifically, fractional operators are special instancies of more general nonlocal operators \cite{Defterli2015,DElia2020,DElia2013,Du2012,Pang2020} of the following form:
\begin{equation}\label{eq:nonlocal-laplacian}
-\mcL u(x) = 2\int_{B_\delta(x)} (u(x)-u(y)) \gam(x,y) dy.
\end{equation}
Here, interactions are limited to a Euclidean ball $B_\delta(x)$ of radius $\delta$, often referred to as horizon or interaction radius. The symmetric nonnegative kernel\footnote{For a discussion regarding nonpositive kernels we refer the reader to \cite{Mengesha2013} and for non-symmetric kernels to \cite{DElia2017,Felsinger2015}.} $\gam(x,y)$ is a modeling choice and determines regularity properties of the solution. First, for $\delta=\infty$ and for the fractional type kernel $\gam(x,y)=(C_{n,s}/2)|x-y|^{-n-2s}$ the nonlocal operator in \eqref{eq:nonlocal-laplacian} is equivalent to the fractional Laplacian in \eqref{eq:fractional-laplacian}; second, it has been shown \cite{DElia2013} that for that choice of $\delta$ and $\gam$ solutions corresponding to the nonlocal operator \eqref{eq:nonlocal-laplacian} converge to the ones corresponding to the fractional operator \eqref{eq:fractional-laplacian} as $\delta\to\infty$. In this paper we consider nonlocal operators with finite-range interactions and characterized by fractional type kernels and we refer to them as {\it truncated fractional operators} (see \cite{DElia2020} for a detailed classification of these operators and relationships between them). 

\smallskip
The highly descriptive power of these operators comes at the price of several modeling and computational challenges, including the unresolved treatment of nonlocal interfaces \cite{Alali2015,Capodaglio2020}; the prescription of nonlocal analogues of boundary conditions \cite{Cortazar2008,DEliaNeumann2019,Lischke2020}; the increasing computational cost as the extent of the nonlocal interactions increases \cite{AinsworthGlusa2017,AinsworthGlusa2018,DElia-ACTA-2020,DEliaFEM2020,Pasetto2019,Silling2005meshfree,Wang2010}, i.e. as $\delta\to\infty$; and the uncertainty and sparsity of model parameters and data \cite{Antil2019FracControl,DElia2019FracControl,DElia2014DistControl,DElia2016ParamControl,DElia2019Imaging,Glusa2019,Gulian2019,Pang2019fPINNs,Pang2017discovery,You2020Regression}. In this work we focus on the latter challenge and design an optimization-based data-driven parameter identification strategy.

Parameter identification problems for nonlocal and fractional operators have been addressed in the literature both with an optimal control approach and a machine learning approach. The literature on uncertainty quantification in the context of parameter estimation is still limited, \cite{Pang2017discovery} being perhaps the only relevant paper. In several works the identification problem is formulated as an optimal control problem where the objective functional is the mismatch between the solution and a given target and the constraints are the nonlocal equations. As an example, in \cite{DElia2016ParamControl} the authors learn a variable diffusion coefficient characterizing the kernel $\gam$ for a class of nonlocal operators that includes truncated fractional operators. An approach similar to the one presented in this work for the estimation of the fractional order of the {\it spectral} fractional Laplacian \cite{Musina2014} has been presented in \cite{Sprekels2017} and further analyzed in \cite{Antil2018sSpectralControl}. Based on these works, applications to image processing have also been considered \cite{Bartels2020}. On the other hand, several other works tackled the identification problem with a machine learning approach; as an example, in \cite{Pang2020,Pang2019fPINNs} the authors employ physics-informed neural networks to describe the nonlocal solution and learn model parameters such as $s$ and $\delta$ through minimization of a loss function given by the solution mismatch and the residual of the state equation. A similar approach, referred to as {\it operator regression} and also based on minimization of the equation residual, is utilized in \cite{You2020Regression} where the authors learn nonlocal kernels, including fractional type kernels, in a least-squares regression setting. Due to the complexity of the functionals involved in their formulations, the machine learning approaches mentioned above do not provide any thoeretical analysis, which is, instead, a central aspect of our work.

Our paper is aligned with the former set of approaches and formulates the learning strategy as an optimal control problem where the controls are the model parameters $s$ and $\delta$, the state equation is a Poisson problem for the truncated fractional Laplacian and the cost functional is a matching functional, i.e. a measure of the difference between the nonlocal solution and a given target. To the best of our knowledge, this is the first work that analyzes the optimal control problem with respect to the fractional order and the truncation legnth of the {\it integral} truncated fractional Laplacian. We also point out that our analysis holds for the case of infinite truncation length, for which the truncated nonlocal operator coincides with the fractional Laplacian itself; this was also missing in the literature. Additionally, we provide and analyze efficient approximations of the bilinear form induced by the variational formulation of the problem and of its derivatives, with the goal of improving the computational performance of the optimization algorithm. These approximations not only are beneficial in our optimal control setting, but have the potential to improve the performance of any algorithm that involves the evaluation of bilinear forms and corresponding derivatives when parameters, such as $s$ and $\delta$, vary throught the algorithm.
More specifically, our main contributions are:\\
1) the rigorous analysis of regularity properties of the bilinear form of the state equations and of the control-to-state map;\\
2) the formulation of a fractional-equation-constrained optimization problem and the derivation of optimality conditions;\\
3) the rigorous analysis, including error estimates, of efficient approximations of the bilinear form and its derivatives with respect to the fractional order (allowing for affine decompositions) and the truncation length;\\
4) the design of a gradient-based algorithm for the solution of the optimization problem, including the construction of an approximate gradient (based on 3) and its error analysis, that allows for fast computations and a complexity study for the resulting discretized formulation.

\subsection{Outline of the paper}
The paper is organized as follows.
In Section \ref{sec:preliminaries} we introduce rigorous notation and recall relevant theoretical results.
In Section \ref{sec:regularity} we analyze regularity properties of the bilinear form of the state equations with respect to the model parameters $\delta$ and $s$ and of the control-to-state map.
In Section \ref{sec:optimization} we introduce the optimal control problem, analyze the existence of solutions and propose an adjoint-based approach for its solution.
In Section \ref{sec:interp-form} we introduce approximations of the bilinear form and its {derivatives and} analyze their accuracy.
In Section~\ref{sec:discretization-error-estimates} we discuss the discretization of state and adjoint problems.
We analyze the convergence subject to discretization and interpolation error of state, adjoint and gradient of the cost functional.
In Section \ref{sec:numerics} we illustrate our theoretical results with several one- and two-dimensional computational tests.
Finally, in Section \ref{sec:conclusion} we report concluding remarks and provide insights on future research directions.

\section{Preliminaries}\label{sec:preliminaries}
Let $\D\subset\R^n$, $n\ge1$ be a bounded domain with $C^{\infty}$-boundary $\partial\D$. 
We introduce a parameter $\q=(s,\delta)\in\Q$, where 
\[
\Q:=\{\q\in\R^2\colon 0<s<1, \;\;0<\delta<\infty\},
\]
and we consider the parametrized nonlocal operator
\begin{equation}\label{nonl_operator}
-\mathcal{L}(\q)u(x):={2} \int_{\R^n}(u(\x)-u(\xx))\kernel(\x,\xx;\q)\d\xx,\quad\x\in\R^n,
\end{equation}
with a fractional type kernel function defined as
\begin{equation}
\kernel(\x,\xx;q):= \frac{\mcX_{B_\delta(\x)}(\xx)}{|\x-\xx|^{n+2s}}.
\label{eq:kernel_delta}
\end{equation}
Here, $\mcX_{B_\delta(\x)}$ denotes the characteristic function over the Euclidean ball $B_{\delta}(\x):=\{y\in\R^n\colon |\x-\xx|\leq\delta\}$, the parameter $\delta$ defines the extent of the nonlocal interactions, and $s$ represents the fractional order of the truncated fractional operator. 
We also introduce a vector $\hq:=[\hs,\hd]^T\in\Q$, and by a slight abuse of notation we denote by $\abs{\hq}^2:=\norm{\hq}^2=\abs{\hs}^2+\abs{\hd}^2$ the usual Euclidean norm.

\begin{remark}
For convenience of notation we omit the scaling constant $C_{n,s}$ in~\eqref{eq:kernel_delta}, and note that the approaches {developed in this work} can be directly extended to the case of $C_{n,s}$ defined in~\eqref{eq:FL_constant}.
\end{remark}

To capture nonlocal interactions outside of $\D$, we introduce the so-called interaction domain {$\DI:=\{\xx\in\R^n\colon \; \exists \;\x\in\D : |\x-\xx|<\delta\}$}, and in terms of this notation the operator in~\eqref{nonl_operator} can also be expressed as 
\begin{multline*}
-\mathcal{L}(\q)u(x)= 2\int_{\DD}(u(\x)-u(\xx))\kernel(\x,\xx;\q)\d\xx=2\int_{(\DD)\cap B_{\delta}(\x)}\frac{u(\x)-u(\xx)}{|\x-\xx|^{n+2s}}\d\xx.
\end{multline*}
We consider the following nonlocal problem with homogeneous Dirichlet-type volume constraints: 
\begin{equation}
\begin{aligned}
-\mathcal{L}(\q)u(x)&=f(\x),\quad&&\x\in\D,\\
u(\x)&=0,\quad&&\x\in\DI.\label{eq:nonl_strong}
\end{aligned}
\end{equation}
For convenience of notation we suppose that $u$ is extended by zero outside of $\DD$, and, by a slight abuse of notation, we still denote it by $u$. When necessary, we chose the whole $\R^n$ as a common spatial domain.
In what follows we often use the relation $a\lesssim b$, implying that $a\leq C b$, where $C>0$ is some constant that may be different in every case.

\subsection{Functional spaces}
We denote by $L^2(\DC)$ the Lebesgue space of square-integrable functions and by $H^1(\DC)$ and $H^1_0(\DC)$ the usual Sobolev spaces, where $\DC$ is given either by $\D$, $\DD$ or $\R^n$.
For $u,v\in L^2(\R^n)$ we define the bilinear form 
\begin{align}
a(u,v):=\int_{S_\delta} \frac{\left(u(\x)-u(\xx)\right)\left(v(\x)-v(\xx)\right)}{|\x-\xx|^{n+2s}}\d\xx\d x,
\label{bil_form}
\end{align}
where $S_\delta$ is the strip
\begin{equation}
S_{\delta}=\{(\x,\xx)\in\R^{2n}\colon 
|\x-\xx|\leq\delta\},
\end{equation}
and for $s\in (0,1)$, $0<\delta\leq\infty$ we introduce the following nonlocal function spaces
\[
\Vd = \{v\in L^2(\DD)\colon a(v,v)<\infty, \;\; v=0\;\;\text{a.e. on}\;\;\DI\},
\]
equipped with the inner product $(u,v)_{\Vd}:=a(u,v)$ and norm $\norm{u}^2_{\Vd}=(u,u)_{\Vd}$.
Next, we define the fractional Sobolev spaces of order $s\in(0,1)$
\begin{equation*}
 H^s(\D):=\big\{v\in L^2(\D)\colon |v|_{H^s(\D)}<\infty\big\},
\end{equation*}
where the Gagliardo seminorm is defined as
\begin{equation*} 
|v|_{H^s(\D)}^2:=\int_{\D}\int_{\D}\frac{|v(\x)-v(\xx)|^2}{|\x-\xx|^{n+2s}}
\d\xx\d\x.
\end{equation*}
For $s>1$ not an integer, we define $H^s(\D)$, $s=m+\sigma$ with $m\in\mathbb{N}$ and $\sigma \in (0,1)$, as
\begin{equation*}
 H^s(\D):=\{v\in H^m(\D)\colon {D^\alpha v} \in {H^\sigma(\D)} \text{ for } 
|\alpha|=m\},
\end{equation*}
together with the semi-norm $\norms{v}^{2}_{H^s(\D)}=\norms{v}^{2}_{H^m(\D)}+\sum_{|\alpha|=m}\norms{D^\alpha v}^{2}_{
H^\sigma(\D) }$.
The space $H^s(\D)$ is a Hilbert space, endowed with the norm 
$ \norm{v}^{2}_{H^s(\D)}=\norm{v}^{2}_{L^2(\D)}+\norms{v}^{2}_{H^s(\D)}$.
Additionally, we define the space incorporating the volume constraints given by
\begin{equation*}
 \HD{s}:=\{v\in H^s(\R^n)\colon v=0 \text{ on }\ \R^n\setminus\D\},
\end{equation*}
that is endowed with the semi-norm of $H^s(\R^n)$, {and by $(\cdot,\cdot)_{\HD{s}}$ we denote the corresponding inner product}. For negative exponents, we define the corresponding spaces by duality $H^{-s}(\Omega) = (H_{\D}^s(\R^n))^{\prime}$. 

We point out that for $0<\delta\leq\infty$ the nonlocal spaces $\Vd$ are all equivalent, see~\cite[Proposition~2.2]{BG2019rbm}. Furthermore, for $\Vd$ is equivalent to $\HD{s}$, which implies that we can equivalently work either with $\Vd$ or $\HD{s}$. 

We also recall that for all $v\in\HD{s}$ and $s\in(0,1)$ the following nonlocal Poincar\'e inequality holds
\begin{equation}
\norm{v}_{L^2(\D)}\leq C_P\norm{v}_{\HD{s}},\quad C_P>0.\label{eq:poincare}
\end{equation}

\subsection{Variational formulation}
We consider the following weak formulation of problem~\eqref{eq:nonl_strong}. For $\q\in\Q$, {$f\in H^{r}(\D)$, $r\geq -s$}, $\varepsilon>0$, find $u(\q)\in\HD{s}$, such that
\begin{equation}
a(u,v;\q)=(f,v)_{L^2(\D)},\quad\forall v\in\HD{s}.\label{eq:var_form}
\end{equation}
The existence and uniqueness of the solution of~\eqref{eq:var_form} follows by  the Lax-Milgram lemma. Moreover, the following improved regularity result holds true~\cite{BG2019vi,grubb2015}.
\begin{theorem}\label{thm:regularity_linear}
Let $\D$ be a domain with $C^{\infty}$ boundary $\partial\D$, and let $f\in  H^r(\D)$, $r\geq -s$, and let $u\in\HD{s}$ be the solution of~\eqref{eq:var_form} with the kernel~\eqref{eq:kernel_delta} for $\delta>0$. Then, for any $\varepsilon>0$ the following holds
\begin{equation}
\norm{u}_{H_\D^{s+\alpha}(\R^n)}\lesssim\norm{f}_{{H^r(\D)}},\label{nonl_regularity}
\end{equation} 
where $\alpha=\min\{s+r,1/2-\varepsilon\}$. 
\end{theorem}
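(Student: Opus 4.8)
The plan is to treat the truncated operator as a lower-order perturbation of the untruncated fractional Laplacian and to inherit the conclusion from the known regularity theory for the latter \cite{grubb2015}. Denote by $-\mcL_\infty$ the operator in~\eqref{nonl_operator} with $\delta=\infty$, which coincides, up to the normalization $C_{n,s}$, with $(-\Delta)^s$. Splitting the range of integration at $|\x-\xx|=\delta$ gives the identity $-\mcL_\infty u=-\mcL(\q)u+\mcR u$ in $\D$, with remainder $\mcR u(\x)=2\int_{\mbRn\setminus B_\delta(\x)}(u(\x)-u(\xx))\,|\x-\xx|^{-n-2s}\,\d\xx$, so that the given solution of~\eqref{eq:var_form} satisfies $-\mcL_\infty u=f+\mcR u$ in $\D$ together with $u=0$ on $\mbRn\setminus\D$. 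Writing $\mcR u=2c_\delta\,u-2\,(u\ast K_\delta)$, with the finite constant $c_\delta:=\int_{|z|\geq\delta}|z|^{-n-2s}\,\d z$ and the bounded, integrable kernel $K_\delta(z):=|z|^{-n-2s}\mcX_{\{|z|\geq\delta\}}$, I would move the zeroth-order term $2c_\delta u$ to the left-hand side; this perturbs $-\mcL_\infty$ only by a constant reaction term and hence leaves the principal symbol, and therefore the Dirichlet regularity theory, unchanged.

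The key step is to show that the convolution remainder is strictly more regular than $f$. Since $K_\delta$ is smooth away from the sphere $\{|z|=\delta\}$ and exhibits merely a jump across it, a stationary-phase estimate for this sphere-carried singularity yields the uniform decay $|\widehat{K_\delta}(\xi)|\lesssim(1+|\xi|)^{-(n+1)/2}$. Combined with $\widehat{u\ast K_\delta}=\widehat{u}\,\widehat{K_\delta}$ and $u\in\HD{s}$, this gives $u\ast K_\delta\in H^{s+(n+1)/2}(\mbRn)$. Crucially, this is a genuine Sobolev function on all of $\mbRn$, so its restriction to $\D$ carries no boundary layer and lies in $H^{s+(n+1)/2}(\D)$. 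Consequently the new right-hand side $g:=f-2\,(u\ast K_\delta)$ belongs to $H^{\tilde r}(\D)$ with $\tilde r=\min\{r,\,s+(n+1)/2\}$.

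It then remains to apply the regularity theorem for the homogeneous Dirichlet problem of the fractional Laplacian, including the admissible constant reaction term \cite{grubb2015,BG2019vi}: from $g\in H^{\tilde r}(\D)$ one obtains $u\in H_\D^{s+\alpha}(\R^n)$ with $\alpha=\min\{s+\tilde r,\,1/2-\varepsilon\}$, the ceiling $1/2-\varepsilon$ reflecting the boundary behavior $u\sim\operatorname{dist}(\cdot,\partial\D)^s$ of the $\mu$-transmission problem and requiring the $C^\infty$ boundary. A short bookkeeping closes the argument: when $r\leq s+(n+1)/2$ one has $\tilde r=r$ and the exponent is exactly $\min\{s+r,\,1/2-\varepsilon\}$, whereas when $r>s+(n+1)/2$ one has $s+\tilde r=2s+(n+1)/2\geq1>1/2-\varepsilon$, so both $\min\{s+\tilde r,1/2-\varepsilon\}$ and $\min\{s+r,1/2-\varepsilon\}$ equal $1/2-\varepsilon$. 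Estimate~\eqref{nonl_regularity} follows after absorbing the lower-order contributions through the energy bound $\norm{u}_{\HD{s}}\lesssim\norm{f}_{H^{-s}(\D)}\lesssim\norm{f}_{H^r(\D)}$, valid since $r\geq-s$.

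The main obstacle is the second step: certifying that the truncation remainder is genuinely regularizing and compatible with the $\mu$-transmission framework. Two points require care, namely establishing the uniform Fourier decay of the sphere-jump kernel $K_\delta$ (equivalently, identifying $u\mapsto u\ast K_\delta$ as a pseudodifferential operator of strictly negative order), and verifying that neither this smoothing perturbation nor the constant reaction term disturbs the delicate boundary regularity that produces the sharp ceiling $1/2-\varepsilon$. Once the remainder is shown to map $\HD{s}$ into a space strictly above the $f$-scale, the assertion is inherited verbatim from the untruncated theory, which also covers the limiting case $\delta=\infty$ where $\mcR\equiv0$.
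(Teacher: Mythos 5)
The paper never proves this theorem internally: it is imported directly from \cite{BG2019vi,grubb2015}, so there is no in-paper argument to compare yours against line by line. Your reduction of the truncated case to the untruncated one is correct, and it is essentially the standard route behind that citation; note that the very same operator splitting $a(u,v;s,\delta)=a(u,v;s,\infty)+c(u,v;s,\delta)$, with the correction decomposed into a multiple of the identity plus convolution against $|z|^{-n-2s}\mcX_{\{|z|\geq\delta\}}$, reappears in the paper itself in Section~\ref{sec:interp-form}, equation~\eqref{eq:correction}. Two remarks on your two flagged ``obstacles''. First, the stationary-phase decay $|\widehat{K_\delta}(\xi)|\lesssim(1+|\xi|)^{-(n+1)/2}$ is correct (split $K_\delta$ into a smooth integrable tail, whose transform decays rapidly, and a compactly supported piece with a conormal jump across the sphere of nonvanishing curvature), but it is more than you need: since the regularity ceiling is $1/2-\varepsilon$, any decay of order $1$ already gives $s+\tilde r\geq 2s+1>1/2$, and even mere boundedness of $\widehat{K_\delta}$ (immediate from $K_\delta\in L^1$) closes the argument after a single bootstrap when $s<1/4$; this sidesteps the only genuinely delicate estimate in your write-up. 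Second, your treatment of the constant reaction term is sound precisely because only the a priori lifting statement of the $\mu$-transmission theory is being transferred, not solvability, so the sign of $-2c_\delta$ and possible spectral coincidences are irrelevant; the alternative of leaving $2c_\delta u\in H^s(\D)$ on the right-hand side and bootstrapping once works equally well. Your final bookkeeping ($\tilde r=r$ when $r\leq s+(n+1)/2$, ceiling saturated otherwise) and the energy bound $\norm{u}_{\HD{s}}\lesssim\norm{f}_{H^{-s}(\D)}$ for $r\geq-s$ are both correct; the only caveat worth recording is that the hidden constant inherits a dependence on $\delta$ through $c_\delta$ and $\norm{K_\delta}_{L^1}$, which is consistent with the theorem as stated since no uniformity in $\delta$ is claimed.
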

\begin{assumption}\label{ass:f-regularity}
We let the forcing term $f$ be such that $f\in H^{1/2-\varepsilon}(\D)$.
\end{assumption}
Note that Assumption \ref{ass:f-regularity} implies that $u\in\HD{s+1/2-\varepsilon}$, $\varepsilon>0$; this is the highest regularity for the solution $u$.

To conclude this section, we introduce a solution operator $\S:\Q\to\HD{s}\subset L^2(\D)$, such that $u(\q)=\S(\q)$ is a unique solution of~\eqref{eq:var_form}, i.e., $\S(\q)$ is such that
\begin{equation}
a(\S(\q),v;\q)=(f,v)_{L^2(\D)},\quad\forall v\in\HD{s}.\label{eq:var_form_s}
\end{equation}

\section{Regularity of the bilinear form and of the Control-to-State map}\label{sec:regularity}
In this section we recall from~\cite{BG2019rbm} the parametric regularity results with respect to $s$ and $\delta$ for the bilinear form associated with the state equation and then analyze properties of the control-to-state map.
\begin{proposition}[$\delta$-regularity]\label{prop:delta_reg}
For $\delta\in(0,\infty)$, $s\in(0,1)$, and $\kernel$ defined in~\eqref{eq:kernel_delta}, the following hold true for all $u,v\in\HD{s}$:
\begin{enumerate}
\item[(i)] the bilinear form  $a(\cdot,\cdot;\delta)$ is differentiable with respect to $\delta$ with
\begin{equation}
\label{eq:der_a}
a^{\prime}_\delta(u,v;\delta)
= \int_{\R^n} \int_{\partial B_\delta(\x)} \frac{\left(u(\x)-u(\xx)\right)\left(v(\x)-v(\xx)\right)}{|\x-\xx|^{n+2s}}\d\xx \d\x,
\end{equation}
where $\partial B_\delta(\x)$ is the surface of the ball of radius $\delta$ centered at $\x$, i.e.\ $\partial B_\delta(\x) = \{\xx \in \R^n\colon |\x-\xx|= \delta\}$, and the inner integral is understood as a surface integral;
\item[(ii)] the derivative  $a^\prime_{\delta}(\cdot,\cdot;\delta)$ is bounded
\begin{equation}
\norms{a^{\prime}_\delta(u,v;\delta)}\lesssim\norm{u}_{L^2(\D)}\norm{v}_{L^2(\D)};
\label{eq:bdd_der_a_delta}
\end{equation}
\item[(iii)] the derivative  $a^\prime_{\delta}(\cdot,\cdot;\delta)$ is H\"older continuous, i.e., for $\theta\in[0,1]$ and $\delta_1,\delta_2\in(0,\infty)$ it holds
\begin{equation}
\norms{a^{\prime}_{\delta}(u,v;\delta_1)-a^{\prime}_{\delta}(u,v;\delta_2)}\lesssim \norms{\delta_1-\delta_2}^\theta\norm{u}_{\HD{\theta}}\norm{v}_{L^2(\D)},\label{eq:hodler_der_a_delta}
\end{equation}
where $\HD{0}\equiv L^2(\D)$ and $\HD{1}\equiv H^1_0(\D)$.
\end{enumerate}
\end{proposition}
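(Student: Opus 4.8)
The plan is to collapse the bilinear form into a single radial--spherical integral, so that differentiation in $\delta$ becomes an application of the fundamental theorem of calculus, and then to control the resulting surface integral using the finite-difference characterization of fractional Sobolev regularity. For (i), I would first pass to polar coordinates in the inner variable of \eqref{bil_form}: writing $\xx=\x+r\omega$ with $r\in(0,\delta)$, $\omega\in S^{n-1}$, so that $\d\xx=r^{n-1}\d r\,\d\omega$ and $|\x-\xx|=r$, the integral over $B_\delta(\x)$ becomes
\[
a(u,v;\delta)=\int_0^\delta r^{-1-2s}\Big[\int_{\R^n}\int_{S^{n-1}}(u(\x)-u(\x+r\omega))(v(\x)-v(\x+r\omega))\,\d\omega\,\d\x\Big]\d r=:\int_0^\delta\phi(r)\,\d r.
\]
Here $\phi$ is locally integrable on $(0,\infty)$ — its integrability near $0$ is exactly the finiteness of the $H^s$-seminorm — and it is continuous at every fixed $\delta>0$, since for $r$ bounded away from $0$ the bracketed term is continuous in $r$ by dominated convergence (the difference quotients are bounded in $L^2$ by $2\norm{u}_{L^2(\D)}$ and $2\norm{v}_{L^2(\D)}$). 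The fundamental theorem of calculus then gives $a^\prime_\delta(u,v;\delta)=\phi(\delta)$, and re-expressing $\delta^{n-1}\d\omega$ as surface measure on $\partial B_\delta(\x)$ recovers \eqref{eq:der_a}.

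For (ii), I would start from the surface representation and use that $|\x-\xx|=\delta$ is constant on $\partial B_\delta(\x)$; parametrizing $\xx=\x+\delta\omega$ gives
\[
a^\prime_\delta(u,v;\delta)=\delta^{-1-2s}\int_{S^{n-1}}\int_{\R^n}(u(\x)-u(\x+\delta\omega))(v(\x)-v(\x+\delta\omega))\,\d\x\,\d\omega.
\]
For each fixed $\omega$, Cauchy--Schwarz together with translation invariance of the Lebesgue integral bounds the inner integral by $4\norm{u}_{L^2(\D)}\norm{v}_{L^2(\D)}$, and integrating over $S^{n-1}$ yields \eqref{eq:bdd_der_a_delta}, the implicit constant carrying the (fixed) factor $\delta^{-1-2s}|S^{n-1}|$.

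For (iii), the preliminary step is to expand the product and shift the increment off $v$ onto $u$ — legitimate by translation invariance together with the symmetrization $\omega\mapsto-\omega$ over the full sphere — to reach the asymmetric representation
\[
a^\prime_\delta(u,v;\delta)=2\delta^{-1-2s}\int_{S^{n-1}}\big(u(\cdot)-u(\cdot+\delta\omega),\,v\big)_{L^2}\,\d\omega,
\]
which already explains why the target estimate carries $\HD{\theta}$ on $u$ but only $L^2(\D)$ on $v$. Subtracting the two values of $\delta$ and applying Cauchy--Schwarz in $\x$ pointwise in $\omega$, it remains to bound
\[
\int_{S^{n-1}}\big\|\delta_1^{-1-2s}(u(\cdot)-u(\cdot+\delta_1\omega))-\delta_2^{-1-2s}(u(\cdot)-u(\cdot+\delta_2\omega))\big\|_{L^2}\,\d\omega\lesssim\abs{\delta_1-\delta_2}^\theta\norm{u}_{\HD{\theta}}.
\]
I would split the integrand into a step-difference term $\delta_1^{-1-2s}(u(\cdot+\delta_2\omega)-u(\cdot+\delta_1\omega))$ and a weight-difference term $(\delta_1^{-1-2s}-\delta_2^{-1-2s})(u(\cdot)-u(\cdot+\delta_2\omega))$, estimating each by the finite-difference bound $\norm{u(\cdot)-u(\cdot+h)}_{L^2}\lesssim\abs{h}^\theta\norm{u}_{\HD{\theta}}$ for $\theta\in[0,1]$, which follows from Plancherel and $|e^{ih\cdot\xi}-1|^2\le4^{1-\theta}(|h|\,|\xi|)^{2\theta}$. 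The first term produces the factor $\abs{\delta_1-\delta_2}^\theta$ directly; for the second the mean value theorem gives $|\delta_1^{-1-2s}-\delta_2^{-1-2s}|\lesssim\abs{\delta_1-\delta_2}$.

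The hard part is precisely this weight-difference term: it yields a first power $\abs{\delta_1-\delta_2}^1$ rather than $\abs{\delta_1-\delta_2}^\theta$, so one must absorb the discrepancy (using $\abs{\delta_1-\delta_2}\le\abs{\delta_1-\delta_2}^\theta$ when $\abs{\delta_1-\delta_2}\le1$, and falling back on the uniform bound from (ii) for larger separations), while simultaneously tracking how the singular prefactor $\delta^{-1-2s}$ enters the constant. The constant so obtained is uniform on compact subsets of $(0,\infty)$ but degenerates as $\delta\to0$; managing this degeneration, rather than the decay in $\abs{\delta_1-\delta_2}$ itself, is where the genuine care lies.
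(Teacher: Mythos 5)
Your proposal is correct, but it takes a genuinely different route from the paper: the paper's proof of this proposition is essentially a citation — items (i), (ii) and the endpoint cases $\theta=0,1$ of (iii) are taken directly from \cite[Theorem~3.3]{BG2019rbm}, and the intermediate exponents $\theta\in(0,1)$ are then obtained by interpolating between the $L^2(\D)$ and $H^1_0(\D)$ estimates. You instead give a self-contained derivation: polar coordinates plus the fundamental theorem of calculus for (i), the surface parametrization with Cauchy--Schwarz and translation invariance for (ii), and for (iii) the symmetrization $\omega\mapsto-\omega$ to shift all increments onto $u$, followed by the finite-difference characterization $\norm{u(\cdot+h)-u}_{L^2}\lesssim|h|^\theta\norm{u}_{\HD{\theta}}$ applied uniformly in $\theta\in[0,1]$. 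The two arguments are morally close — your Plancherel bound on $|e^{ih\cdot\xi}-1|$ is exactly the interpolation inequality between the $\theta=0$ and $\theta=1$ endpoints in disguise — but yours has the advantage of being explicit about the constants: in particular it makes visible that the hidden constants in (ii) and (iii) necessarily degenerate like $\delta^{-1-2s}$ as $\delta\to0$ (so the estimates are uniform only on compact subsets of $(0,\infty)$), a dependence the paper leaves implicit in the reference. Your handling of the weight-difference term (using $|\delta_1-\delta_2|\le|\delta_1-\delta_2|^\theta$ for small separations and falling back on (ii) plus the Poincar\'e inequality $\norm{u}_{L^2(\D)}\lesssim\norm{u}_{\HD{\theta}}$ for large ones) closes the only delicate gap correctly.
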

\begin{proof}
The first two statements follow directly from~\cite[Theorem~3.3]{BG2019rbm}. For $\theta=1$ and $\theta=0$, assertion (iii) has been proven in~\cite[Theorem~3.3]{BG2019rbm}. Then, applying the interpolation theorem we obtain the remaining estimate for $\theta\in(0,1)$ and conclude the proof. 
\end{proof}

\begin{proposition}[$s$-regularity]\label{prop:s_reg}
Let $\delta\in(0,\infty)$ and $s\in (0,1)$.
Then, for $u\in\HD{s+\alpha}$, $0<\alpha<1-s$, $v\in\HD{s}$, and $\kernel$ defined in~\eqref{eq:kernel_delta}, the following hold true:
\begin{enumerate}
\item[(i)] the bilinear form $a(u,v;s)$ is infinitely many times differentiable, i.e., for $k=1,2,\dots$, it holds
\begin{align}
a_s^{(k)}(u,v;s):=(-2)^k\int_{S_{\delta}}\frac{\left(u(\x)-u(\xx)\right)\left(v(\x)-v(\xx)\right)\log^k(|\x-\xx|)}{|\x-\xx|^{n+2s}}\d\xx \d\x;
\end{align}
\item[(ii)] for $u\in\HD{s_2}$, $v\in\HD{s_1}$, $s_1,s_2\in(0,1)$, $s\in(0,(s_1+s_2)/2)\subset(0,1)$, the derivative $a_s^{(k)}(u,v;s)$ is bounded
\begin{equation}
|a_s^{(k)}(u,v;s)|\leq C(k,\hat{\varepsilon}) \norm{u}_{\HD{s_2}}\norm{v}_{\HD{s_1}},\label{eq:bdd_der_a_s}
\end{equation}
where $C(k,\hat{\varepsilon})=2^k\left((k/(e\hat{\varepsilon}))^k+\delta^{\hat{\varepsilon}}(\log(\delta))_{+}^k\right)$, $\hat{\varepsilon}=s_1+s_2-2s>0$;
\item[(iii)] the derivative $a_s^{(k)}(u,v;s)$ is locally Lipschitz continuous:
\begin{equation}
|a_s^{(k)}(u,v;s)-a_s^{(k)}(u,v;s+\eta)|\lesssim \eta\norm{u}_{\HD{s+\alpha}}\norm{v}_{\HD{s}},\label{lipschitz_der_a}
\end{equation}
where $0<\alpha<1-s$, $0<\eta<\alpha/2$ and the hidden constant depends on $\D$, and continuously on $s$ and $\alpha$.
\end{enumerate}
\end{proposition}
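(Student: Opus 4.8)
The plan is to derive both (i) and (iii) from the quantitative bound (ii), which carries all the analytic content. For (i), I would first note that the only $s$-dependence of the kernel~\eqref{eq:kernel_delta} lies in the factor $|\x-\xx|^{-n-2s}=e^{-(n+2s)\log|\x-\xx|}$, which is smooth in $s$ with $\partial_s^k|\x-\xx|^{-n-2s}=(-2\log|\x-\xx|)^k|\x-\xx|^{-n-2s}$; differentiating under the integral sign then formally reproduces the stated formula for $a_s^{(k)}$. To justify the interchange I would argue by induction on $k$, applying the dominated convergence theorem to the difference quotients $\eta^{-1}\big(a_s^{(k)}(u,v;s+\eta)-a_s^{(k)}(u,v;s)\big)$. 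A dominating function for $s'$ ranging over a small compact interval around $s$ is $|u(\x)-u(\xx)|\,|v(\x)-v(\xx)|\,\big|\log|\x-\xx|\big|^{k+1}|\x-\xx|^{-n-2s'}$, whose integrability over $S_\delta$ is exactly the estimate proven in (ii) with $k$ replaced by $k+1$ and the exponent choice $s_2=s+\alpha$, $s_1=s$, which keeps $\hat{\varepsilon}>0$ on the whole interval.

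The heart of the proposition is (ii). Here I would factor the integrand so that the two difference quotients align with the Gagliardo seminorms of $u$ and $v$, writing
\[
\frac{(u(\x)-u(\xx))(v(\x)-v(\xx))\log^k|\x-\xx|}{|\x-\xx|^{n+2s}}=\frac{u(\x)-u(\xx)}{|\x-\xx|^{(n+2s_2)/2}}\cdot\frac{v(\x)-v(\xx)}{|\x-\xx|^{(n+2s_1)/2}}\cdot W(\x,\xx),
\]
where the leftover weight is $W(\x,\xx)=|\x-\xx|^{\hat{\varepsilon}}\log^k|\x-\xx|$ with $\hat{\varepsilon}=s_1+s_2-2s$. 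The Cauchy--Schwarz inequality on $S_\delta$, together with $\int_{S_\delta}|u(\x)-u(\xx)|^2|\x-\xx|^{-n-2s_2}\,\d\xx\,\d\x\le\norm{u}^2_{\HD{s_2}}$ and the analogous bound for $v$, reduces everything to estimating $\norm{W}_{L^\infty(S_\delta)}=\sup_{0<t\le\delta}\big|t^{\hat{\varepsilon}}\log^k t\big|$.

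The step I expect to demand the most care is the sharp elementary evaluation of this supremum. On $(0,1]$ the function $t^{\hat{\varepsilon}}(-\log t)^k$ has its maximum at $t_\ast=e^{-k/\hat{\varepsilon}}$, with value $(k/(e\hat{\varepsilon}))^k$; on $[1,\delta]$ the function $t^{\hat{\varepsilon}}\log^k t$ is increasing, contributing $\delta^{\hat{\varepsilon}}(\log\delta)_+^k$ (this term vanishing automatically when $\delta<1$). Summing the two contributions and reinstating the prefactor $2^k$ yields precisely $C(k,\hat{\varepsilon})$. It is exactly here that the hypothesis $s<(s_1+s_2)/2$ enters: it guarantees $\hat{\varepsilon}>0$, which is what allows the logarithmic singularity at $t=0$ to be absorbed into the integrable power $t^{\hat{\varepsilon}}$.

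Finally, for (iii), since by (i) the real-valued map $s\mapsto a_s^{(k)}(u,v;s)$ is continuously differentiable near $[s,s+\eta]$ with derivative $a_s^{(k+1)}(u,v;\cdot)$ — the regularity $u\in\HD{s+\alpha}$, $v\in\HD{s}$ together with $\eta<\alpha/2$ keeping $\hat{\varepsilon}>0$ throughout — the mean value theorem gives $a_s^{(k)}(u,v;s)-a_s^{(k)}(u,v;s+\eta)=-\eta\,a_s^{(k+1)}(u,v;\xi)$ for some $\xi\in(s,s+\eta)$. Bounding the right-hand side by (ii) with $s_2=s+\alpha$, $s_1=s$, and $k$ replaced by $k+1$, I observe that the relevant exponent $\hat{\varepsilon}(\xi)=2s+\alpha-2\xi$ lies in $(\alpha-2\eta,\alpha)$ and is therefore bounded away from $0$; hence $C(k+1,\hat{\varepsilon}(\xi))$ stays uniformly bounded over $\xi\in(s,s+\eta)$ by a quantity depending on $\D$, $\delta$, $k$ and continuously on $s$ and $\alpha$. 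This uniform bound multiplied by $\eta$ and by $\norm{u}_{\HD{s+\alpha}}\norm{v}_{\HD{s}}$ is the claimed Lipschitz estimate.
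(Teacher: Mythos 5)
Your proposal is correct and follows essentially the same route as the paper: the paper disposes of (i) and (ii) by citing \cite[Lemma~5.1]{BG2019rbm} and proves (iii) exactly as you do, via the mean value theorem applied to $s\mapsto a_s^{(k)}(u,v;s)$ together with the bound (ii) at $s_2=s+\alpha$, $s_1=s$. Your self-contained derivation of (ii) (Cauchy--Schwarz after splitting off the weight $|\x-\xx|^{\hat{\varepsilon}}\log^k|\x-\xx|$, then the elementary maximization at $t_*=e^{-k/\hat{\varepsilon}}$) recovers the paper's constant $C(k,\hat{\varepsilon})$ exactly, so it is evidently the argument behind the cited lemma; the only caveat, shared with the paper, is that in (iii) the constant $C(k+1,\hat{\varepsilon}(\xi))$ degenerates as $\eta\to\alpha/2$, so the hidden constant implicitly depends on $\alpha-2\eta$ as well.
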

\begin{proof}
Since, the first two statements follow directly from~\cite[Lemma~5.1]{BG2019rbm}, it remains to prove the validity of~\eqref{lipschitz_der_a}. Using the results of~\cite[Lemma~5.1]{BG2019rbm} with $s_2=s+\alpha$ and $s_1=s$, we obtain 
\begin{equation*}
\begin{aligned}
   |a_s^{(k)}(u,v;s)-a_s^{(k)}(u,v;s+\eta)|
& \leq \eta\max_{\xi\in[s,s+\eta]}|a_s^{(k+1)}(u,v;\xi)|\\
& \lesssim\eta\norm{u}_{\HD{s+\alpha}}\norm{v}_{\HD{s}},
\end{aligned}
\end{equation*}
which concludes the proof.
\end{proof}
Combining the results of Proposition~\ref{prop:delta_reg} and Proposition~\ref{prop:s_reg} we derive the following estimates for the mixed derivatives.
\begin{proposition}[Mixed derivatives]
For $\delta\in(0,\infty)$, $s\in(0,1)$, and $\kernel$ defined in~\eqref{eq:kernel_delta}, the following hold true for all $u\in\HD{s+\alpha}$, $0<\alpha<1-s$, $v\in\HD{s}$:
\begin{enumerate}
\item[(i)] there exists a second mixed derivative of  $a(\cdot,\cdot;\delta)$ with respect to $\delta$ and $s$: 
\begin{equation}
\begin{aligned}
a^{\prime\prime}_{s,\delta}(u,v;s,\delta):=&\ \frac{\partial^2}{\partial s\partial\delta}\left(a(u,v;s,\delta)\right)\\
= &\ -2\int_{\R^n} \int_{\partial B_\delta(\x)} \frac{\left(u(\x)-u(\xx)\right)\left(v(\x)-v(\xx))\log(|\x-\xx|\right)}{|\x-\xx|^{n+2s}}\d\xx \d\x,
\end{aligned}\label{eq:der_a_deltas}
\end{equation}
where $\partial B_\delta(\x)$ is defined as in Proposition~\ref{prop:delta_reg}.
\item[(ii)] the mixed derivative  $a^{\prime\prime}_{s,\delta}(\cdot,\cdot;s,\delta)$ is bounded
\begin{equation}
\norms{a^{\prime\prime}_{s,\delta}(u,v;s,\delta)}\lesssim\norm{u}_{L^2(\D)}\norm{v}_{L^2(\D)},
\label{eq:bdd_der_a_deltas}
\end{equation}
with the hidden constant depending continuously on $s$ and $\delta$.
\end{enumerate}
\end{proposition}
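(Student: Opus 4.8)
The plan is to obtain the mixed derivative by differentiating the two single-parameter derivative formulas we already possess, and then to reuse the boundedness bounds from Propositions~\ref{prop:delta_reg} and~\ref{prop:s_reg} rather than re-estimating integrals from scratch. For part (i), I would start from the $\delta$-derivative expression~\eqref{eq:der_a}, namely $a'_\delta(u,v;\delta)=\int_{\R^n}\int_{\partial B_\delta(\x)}\frac{(u(\x)-u(\xx))(v(\x)-v(\xx))}{|\x-\xx|^{n+2s}}\d\xx\d\x$, and differentiate it formally with respect to $s$. Since only the factor $|\x-\xx|^{-n-2s}$ depends on $s$, and $\frac{\partial}{\partial s}|\x-\xx|^{-n-2s}=-2\log(|\x-\xx|)\,|\x-\xx|^{-n-2s}$, differentiating under the integral sign produces exactly the claimed formula~\eqref{eq:der_a_deltas}. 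The same expression would also result from differentiating the $s$-derivative $a_s^{(1)}(u,v;s)=-2\int_{S_\delta}\frac{(u-u)(v-v)\log(|\x-\xx|)}{|\x-\xx|^{n+2s}}$ with respect to $\delta$, converting the volume integral over the strip $S_\delta$ into a surface integral over $\partial B_\delta(\x)$ via the coarea formula; noting that both routes give the same answer confirms equality of the mixed partials.

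The genuine work is in justifying the interchange of differentiation and integration, i.e.\ establishing that the formal computation above is legitimate. I would argue this by showing that the differentiated integrand is dominated, locally uniformly in $(s,\delta)$, by an integrable function, so that a dominated-convergence / differentiation-under-the-integral theorem applies. The key difficulty is the product $\log(|\x-\xx|)$, which blows up both as $|\x-\xx|\to0$ and as $|\x-\xx|\to\infty$; near the origin it competes with the nearly singular kernel, and the surface restriction to $|\x-\xx|=\delta$ must be handled carefully. I expect this domination step to be the main obstacle, but it can be controlled by the same mechanism already used in~\cite{BG2019rbm}: for any fixed $\delta$ the logarithm on the sphere $\partial B_\delta(\x)$ equals the constant $\log\delta$, which removes the singularity on the surface and reduces the bound to a weighted surface integral of $(u(\x)-u(\xx))(v(\x)-v(\xx))$.

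For part (ii), the boundedness estimate~\eqref{eq:bdd_der_a_deltas}, I would exploit that on $\partial B_\delta(\x)$ the logarithm is constant, so that $|a''_{s,\delta}(u,v;s,\delta)|=2|\log\delta|\,|a'_\delta(u,v;\delta)|$. The right-hand side is then bounded directly by the $\delta$-derivative bound~\eqref{eq:bdd_der_a_delta} from Proposition~\ref{prop:delta_reg}(ii), giving $|a''_{s,\delta}(u,v;s,\delta)|\lesssim|\log\delta|\,\norm{u}_{L^2(\D)}\norm{v}_{L^2(\D)}$, with the factor $|\log\delta|$ absorbed into a hidden constant that depends continuously on $\delta$ (and on $s$ through~\eqref{eq:bdd_der_a_delta}). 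This is the cleanest path: because the surface integral forces $|\x-\xx|=\delta$ exactly, the troublesome logarithm degenerates to a harmless multiplicative constant, and the whole estimate collapses onto the already-proven $\delta$-derivative bound rather than requiring a fresh trace-type argument. I would close by remarking that the continuous dependence of the constant on $s$ and $\delta$ is inherited from the corresponding statement in Proposition~\ref{prop:delta_reg}.
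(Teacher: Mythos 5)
Your proposal is correct and takes essentially the route the paper intends: the paper gives no written proof for this proposition, stating only that it follows by ``combining the results of Proposition~\ref{prop:delta_reg} and Proposition~\ref{prop:s_reg},'' which is exactly what you carry out. Your key observation for (ii) --- that $\log|\x-\xx|=\log\delta$ is constant on $\partial B_\delta(\x)$, so that $a^{\prime\prime}_{s,\delta}(u,v;s,\delta)=-2\log(\delta)\,a^{\prime}_{\delta}(u,v;\delta)$ and the bound collapses onto~\eqref{eq:bdd_der_a_delta} with a factor $2|\log\delta|$ absorbed into the $\delta$-dependent constant --- is the clean way to make that combination explicit, and it simultaneously supplies the domination needed to justify differentiating under the integral sign in (i).
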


Next, we analyze the properties of the map $\S:\Q\to L^2(\D)$. The following proposition shows that $\S$ is Lipschitz continuous.

\begin{proposition}[Lipschitz continuity of $\S(\q)$]\label{prop:cont_S}
Let $\delta\in(0,\infty)$ and $s\in (0,1)$.
Then, for any $\q\in\Q$, $S(\q)\in\HD{s+1/2-\varepsilon}$. Moreover, for $0<\alpha<\min\{(1-s)/2-\varepsilon,s\}$, $\norms{\hq}<\alpha/2$, and $\kernel$ defined in~\eqref{eq:kernel_delta}, $S(\q)$ is locally Lipschitz continuous:
\begin{equation}
\norm{\S(\q)-\S(\q+\hq)}_{\HD{s+\alpha}}\lesssim\norms{\hq}\norm{\S(\q+\hq)}_{\HD{s+2\alpha}}.\label{lipschitz_S}
\end{equation}
\end{proposition}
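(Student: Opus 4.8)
The plan is to derive an equation satisfied by the difference $w:=\S(\q)-\S(\q+\hq)$, to identify its right-hand side as a negative-order functional measuring the variation of the bilinear form in the parameters, and then to gain the extra $\alpha$ derivatives through the lifting provided by Theorem~\ref{thm:regularity_linear}. First I would record that $\S(\q)\in\HD{s+1/2-\varepsilon}$ follows at once from Theorem~\ref{thm:regularity_linear} and Assumption~\ref{ass:f-regularity}: taking $r=1/2-\varepsilon$ gives regularity index $\min\{s+1/2-\varepsilon,1/2-\varepsilon\}=1/2-\varepsilon$. Writing $u:=\S(\q)$ and $\tilde u:=\S(\q+\hq)$, both solutions lie in $\HD{s}$ (the restriction $\norms{\hq}<\alpha/2$ keeps the order $s+\hs$ of $\tilde u$ large enough that $s+\hs+1/2-\varepsilon>s$), so $w\in\HD{s}$ is well defined. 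Subtracting the two instances of~\eqref{eq:var_form_s} and inserting $\pm\,a(\tilde u,v;\q)$ yields the defect equation
\[
a(w,v;\q)=a(\tilde u,v;\q+\hq)-a(\tilde u,v;\q)=:\dual{R,v},
\]
valid first for $v\in\HD{\max\{s,s+\hs\}}$; by density of this space in $\HD{s}$ and continuity of both sides (verified using the smoothness of $\tilde u$ together with the bound on $R$ below), the identity extends to all $v\in\HD{s}$.

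Next I would estimate $R$ in the dual norm of $\HD{s-\alpha}$, i.e.\ in $H^{\alpha-s}(\D)=(\HD{s-\alpha})'$, noting that $\alpha-s<0$ since $\alpha<s$. Splitting the increment into a pure $s$-step followed by a pure $\delta$-step,
\[
\dual{R,v}=\big[a(\tilde u,v;s+\hs,\delta+\hd)-a(\tilde u,v;s,\delta+\hd)\big]+\big[a(\tilde u,v;s,\delta+\hd)-a(\tilde u,v;s,\delta)\big],
\]
I apply the mean value theorem to each bracket. For the $\delta$-step the bound~\eqref{eq:bdd_der_a_delta} produces a factor $\norms{\hd}\,\norm{\tilde u}_{L^2(\D)}\norm{v}_{L^2(\D)}$, and the Poincar\'e inequality~\eqref{eq:poincare} (applied at orders $s-\alpha>0$ and $s+2\alpha$) absorbs the $L^2$ norms into $\norm{\tilde u}_{\HD{s+2\alpha}}\norm{v}_{\HD{s-\alpha}}$. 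For the $s$-step I use~\eqref{eq:bdd_der_a_s} with $k=1$ and the exponent choice $s_2=s+2\alpha$, $s_1=s-\alpha$: the admissibility condition $\xi<(s_1+s_2)/2=s+\alpha/2$ at the intermediate point $\xi\in[s,s+\hs]$ holds precisely because $\norms{\hq}<\alpha/2$, while $\alpha<(1-s)/2-\varepsilon$ keeps $s_2=s+2\alpha<1$ in the admissible fractional range. Collecting the two contributions gives
\[
\norms{\dual{R,v}}\lesssim\norms{\hq}\,\norm{\tilde u}_{\HD{s+2\alpha}}\norm{v}_{\HD{s-\alpha}},\qquad\text{whence}\qquad\norm{R}_{H^{\alpha-s}(\D)}\lesssim\norms{\hq}\,\norm{\tilde u}_{\HD{s+2\alpha}}.
\]

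Finally I would read the defect equation as problem~\eqref{eq:var_form_s} with forcing $R\in H^{\alpha-s}(\D)$ and invoke Theorem~\ref{thm:regularity_linear} with $r=\alpha-s\ge-s$: since $\alpha<1/2-\varepsilon$, the regularity index equals $\min\{s+r,1/2-\varepsilon\}=\alpha$, so $\norm{w}_{\HD{s+\alpha}}\lesssim\norm{R}_{H^{\alpha-s}(\D)}$, and combining with the previous display gives exactly~\eqref{lipschitz_S}. I expect the main obstacle to be the bookkeeping forced by the $s$-dependence of the solution space: the two solutions nominally inhabit the different spaces $\HD{s}$ and $\HD{s+\hs}$, so one must carefully justify the defect equation on the full space $\HD{s}$ and, above all, select the pair $(s_1,s_2)=(s-\alpha,s+2\alpha)$ so that the interpolation condition $\xi<(s_1+s_2)/2$ of Proposition~\ref{prop:s_reg}(ii) is satisfied while still yielding precisely the norm $\norm{\S(\q+\hq)}_{\HD{s+2\alpha}}$ required on the right-hand side of~\eqref{lipschitz_S}.
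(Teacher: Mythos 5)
Your proposal is correct and follows essentially the same route as the paper's proof: derive the defect equation for $\S(\q)-\S(\q+\hq)$, bound its right-hand side in $H^{\alpha-s}(\D)$ via the parameter derivatives of $a$ with the exponent pair $(s_1,s_2)=(s-\alpha,s+2\alpha)$ (the $\delta$-part via \eqref{eq:bdd_der_a_delta} and Poincar\'e), and lift by Theorem~\ref{thm:regularity_linear}. The only cosmetic difference is that you split the increment into separate $s$- and $\delta$-steps with the mean value theorem, whereas the paper integrates $a'_\q$ along the segment $\q+\tau\hq$; the resulting estimates are identical.
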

\begin{proof}
We note that the first statement $\S(\q)\in\HD{s+1/2-\varepsilon}$ follows directly from Theorem~\ref{thm:regularity_linear}. Then, for $\q,\hq\in\Q$ using~\eqref{eq:var_form_s} we can express
\begin{multline*}
a(\S(\q)-\S(\q+\hq),v;\q)=a(\S(\q),v;\q)-a(\S(\q+\hq),v;\q)\pm a(\S(\q+\hq)v;\q+\hq)\\
=a(\S(\q+\hq),v;\q+\hq)-a(\S(\q+\hq),v;\q).
\end{multline*}
Let $\widetilde{\q}(\tau)=\q+\tau\hq$, $\tau\in(0,1)$, and $g(\tau):=a(\S(\widetilde{\q}(1)),v;\widetilde{\q}(\tau))$, then we obtain that $g^{\prime}(\tau)=a^\prime_\q(\S(\q+\hq),v;\q+\tau\hq)(\hq)$ and the following holds
\begin{multline*}
a(\S(q+\hq),v;\q+\hq)-a(\S(\q+\hq),v;\q)=g(1)-g(0)=\int_{0}^1g^\prime(\xi)\d\xi\\
=\int_0^1 a^\prime_\q(\S(\q+\hq),v;\q+\xi\hq)(\hq)\d\xi.
\end{multline*}
Using~\eqref{lipschitz_der_a} with $2\alpha-1/2+\varepsilon\leq\hs<\alpha/2$, where $0<\alpha<\min\{(1-s)/2-\varepsilon,s\}$, $\varepsilon>0$, and~\eqref{eq:bdd_der_a_delta} with~\eqref{eq:poincare} we obtain 
\begin{multline*}
\abs{a^{\prime}_\q(\S(\q+\hq),v;\q+\xi\hq)(\hq)}\\
\leq\abs{\hs}\abs{a^{\prime}_s(\S(\q+\hq),v;\q+\xi\hq)}+\abs{\hd}\abs{a^{\prime}_{\delta}(\S(\q+\hq),v;\q+\xi\hq)}\\
\lesssim\abs{\hs}\norm{\S(\q+\hq)}_{\HD{s+2\alpha}}\norm{v}_{\HD{s-\alpha}}+\abs{\hd}\norm{\S(\q+\hq)}_{L^2(\D)}\norm{v}_{L^2(\D)}\\
\lesssim\norms{h}\norm{\S(\q+\hq)}_{\HD{s+2\alpha}}\norm{v}_{\HD{s-\alpha}}.
\end{multline*}
We note that under the above conditions on $\hs$ and $\alpha$, we have that $\S(\q+\hq)\in\HD{s+\hs+1/2-\varepsilon}\subset\HD{s+2\alpha}$, and hence $\norm{u}_{\HD{s+2\alpha}}$ is well-defined. Furthermore, if we introduce $F(v):=\int_0^1 a^\prime_\q(\S(\q+\hq),v;\q+\xi\hq)(\hq)\d\xi$, then the above also implies that $F\in(\HD{s-\alpha})^*= H^{-s+\alpha}(\D)$. Now, applying Theorem~\ref{thm:regularity_linear} to the problem
\begin{equation}
a(\S(\q)-\S(\q+\hq),v;\q)=F(v),\quad\forall v\in\HD{s},
\end{equation}
we conclude that $\S(\q)-\S(\q+\hq)\in\HD{s+\beta}$, where $\beta=\min\{\alpha,1/2-\varepsilon\}$. Since, $\alpha<(1-s)/2-\varepsilon$, it follows that $\beta=\alpha$ and from~\eqref{nonl_regularity} we obtain
\[
\norm{\S(\q)-\S(\q+\hq)}_{\HD{s+\alpha}}\lesssim\norm{F}_{H^{-s+\alpha}(\D)}
\lesssim\norms{h}\norm{\S(\q+\hq)}_{\HD{s+2\alpha}}
\]
that concludes the proof. 
\end{proof}

{We now investigate regularity properties.} By differentiating the state equation~\eqref{eq:var_form} with respect to $\q\in\Q$ in the direction $\hq\in\Q$, and taking into account that $a^\prime_u(u,v;\q)(\hu)=a(\hu,v;\q)$ for any direction $\hu\in\HD{s}$, we obtain the following \emph{sensitivity equation}:
\begin{equation}
a(\hu,v;\q)=-a^{\prime}_\q(u,v;\q)(\hq)\quad\forall v\in\HD{s}.
\label{eq:sensitivity}
\end{equation}
Next, we establish the differentiability of the solution map $\S:\Q\to L^2(\D)$.
\begin{theorem}\label{thm:frechet}
The map $\S:\Q\to L^2(\D)$ is Fr\'echet differentiable, i.e., for $\q\in\Q$ and $\hq\in\Q$ we have
\begin{equation*}
\lim_{|\hq|\to 0}\frac{\norm{\S(\q+\hq)-\S(\q)-\S^\prime(\q)(\hq)}_{\HD{s}}}{|\hq|}=0.
\end{equation*}
\end{theorem}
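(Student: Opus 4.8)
The plan is to identify the Fréchet derivative with the solution of the sensitivity equation \eqref{eq:sensitivity}. For fixed $\q\in\Q$ and a direction $\hq$, let $\hu=\S^\prime(\q)(\hq)\in\HD{s}$ denote the unique solution of $a(\hu,v;\q)=-a^\prime_\q(\S(\q),v;\q)(\hq)$ for all $v\in\HD{s}$. This is well posed by Lax--Milgram: $a(\cdot,\cdot;\q)$ is coercive on $\HD{s}$ (it is the inner product on $\Vd\equiv\HD{s}$), and by the boundedness estimates \eqref{eq:bdd_der_a_delta} and \eqref{eq:bdd_der_a_s} together with Poincaré \eqref{eq:poincare} the right-hand side is a bounded linear functional of $v$. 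Since $a^\prime_\q(u,v;\q)(\hq)=\hs\,a^\prime_s(u,v;\q)+\hd\,a^\prime_\delta(u,v;\q)$ depends linearly on $\hq=[\hs,\hd]^T$, the map $\hq\mapsto\hu$ is linear and bounded, so it is the natural candidate for $\S^\prime(\q)$.

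Next I would set $w:=\S(\q+\hq)-\S(\q)-\S^\prime(\q)(\hq)$ with $u=\S(\q)$, $u_{\hq}=\S(\q+\hq)$, and derive an error equation. Using \eqref{eq:var_form_s} at both parameter values gives $a(u,v;\q)=a(u_{\hq},v;\q+\hq)$, whence $a(u_{\hq}-u,v;\q)=-\bigl(a(u_{\hq},v;\q+\hq)-a(u_{\hq},v;\q)\bigr)=-\int_0^1 a^\prime_\q(u_{\hq},v;\q+\tau\hq)(\hq)\,\d\tau$ by the fundamental theorem of calculus in the parameter. Subtracting \eqref{eq:sensitivity} (in the form $a(\hu,v;\q)=-a^\prime_\q(u,v;\q)(\hq)$) and writing $a^\prime_\q(u,v;\q)(\hq)=\int_0^1 a^\prime_\q(u,v;\q)(\hq)\,\d\tau$, linearity of the derivative in its first slot yields $a(w,v;\q)=-\int_0^1\bigl[a^\prime_\q(u_{\hq}-u,v;\q+\tau\hq)(\hq)+\bigl(a^\prime_\q(u,v;\q+\tau\hq)(\hq)-a^\prime_\q(u,v;\q)(\hq)\bigr)\bigr]\d\tau$.

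The estimate of the integrand splits into two contributions. The first, $a^\prime_\q(u_{\hq}-u,v;\q+\tau\hq)(\hq)$, carries an explicit factor $\norms{\hq}$ from the direction and a factor $\norm{u_{\hq}-u}_{\HD{s+\alpha}}$; by the Lipschitz continuity of the control-to-state map \eqref{lipschitz_S} the latter is itself of order $\norms{\hq}$, so this term is of order $\norms{\hq}^2$ after applying \eqref{eq:bdd_der_a_delta} and \eqref{eq:bdd_der_a_s} (with $s_2=s+\alpha$, $s_1=s$, which is legitimate since $\tau\hs<\alpha/2$ forces $s+\tau\hs<s+\alpha/2$) and collapsing the $v$-norms to $\norm{v}_{\HD{s}}$ via Poincaré. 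The hard part is the second contribution, the variation of the derivative in $\q$: Propositions \ref{prop:delta_reg} and \ref{prop:s_reg} only control one parameter at a time, so I would interpolate through an intermediate node such as $(s,\delta+\tau\hd)$, treating the pure increments with the Hölder bound \eqref{eq:hodler_der_a_delta} and the Lipschitz bound \eqref{lipschitz_der_a}, and controlling the two cross increments (a change of $\delta$ inside $a^\prime_s$ and a change of $s$ inside $a^\prime_\delta$) with the mixed-derivative bound \eqref{eq:bdd_der_a_deltas}. This gives a bound of order $\norms{\hq}^{1+\theta}$; the only factor that is not already quadratic is the $\delta$-Hölder term $\abs{\hd}\,\abs{\tau\hd}^\theta\norm{u}_{\HD{\theta}}$, and since Assumption \ref{ass:f-regularity} and Theorem \ref{thm:regularity_linear} give $u\in\HD{s+1/2-\varepsilon}$, any $\theta\in(0,\min\{s+1/2-\varepsilon,1\}]$ is admissible, which is all that is required.

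Finally I would close the argument by coercivity: testing the error equation with $v=w$ and using that the two contributions are bounded by $\norms{\hq}^{1+\theta}\norm{w}_{\HD{s}}$ gives $\norm{w}_{\HD{s}}^2\lesssim a(w,w;\q)\lesssim\norms{\hq}^{1+\theta}\norm{w}_{\HD{s}}$, hence $\norm{w}_{\HD{s}}\lesssim\norms{\hq}^{1+\theta}=o(\norms{\hq})$, which is exactly the differentiability claim. I expect the principal difficulty to be the bookkeeping of Sobolev exponents in the second contribution rather than any conceptual obstacle: one must ensure simultaneously that $u$ and $u_{\hq}$ lie in the spaces demanded by \eqref{eq:bdd_der_a_s}, \eqref{lipschitz_der_a} and \eqref{eq:hodler_der_a_delta}, that the constraints $\tau\hs<\alpha/2$ and $0<\alpha<\min\{(1-s)/2-\varepsilon,s\}$ inherited from Proposition \ref{prop:cont_S} remain mutually compatible as $\norms{\hq}\to0$, and that every $v$-dependence ultimately reduces to $\norm{v}_{\HD{s}}$.
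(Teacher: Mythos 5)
Your proposal is correct and follows essentially the same route as the paper's proof: identify $\S^\prime(\q)(\hq)$ with the solution of the sensitivity equation, derive the error equation for $w$ via the fundamental theorem of calculus in the parameter, split into a term controlled by the Lipschitz continuity of $\S$ (Proposition~\ref{prop:cont_S}) and a term measuring the variation of $a^\prime_\q$ in $\q$ handled by inserting an intermediate node and invoking \eqref{lipschitz_der_a}, \eqref{eq:hodler_der_a_delta} and the mixed-derivative bound \eqref{eq:bdd_der_a_deltas}, then close by testing with $v=w$ and coercivity. The only (immaterial) deviations are the grouping of terms in the error equation, the choice of intermediate interpolation node, and your use of a general H\"older exponent $\theta$ where the paper fixes $\theta=s$.
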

\begin{proof}
Let $z(\q):=\S(\q+\hq)-\S(\q)-\S^\prime(\q)(\hq)$, then using the similar arguments as in the proof of Proposition~\ref{prop:cont_S}, and the sensitivity equation~\eqref{eq:sensitivity} with $\hu:=\S^\prime(\q)(\hq)$, Lipschitz continuity and boundedness of $a^\prime_q(\cdot,\cdot;\q)$, we obtain:
\begin{equation}
\begin{aligned}
a(z(\q),v;\q)=&\ a(\S(\q+\hq)-\S(\q)-\S^\prime(\q)(\hq),v;\q) \pm a(\S(\q+\hq),v;\q+\hq)\\
=&\ a(\S(q+\hq),v;\q)-a(\S(\q+\hq),v;\q+\hq)-a(\S^\prime(\q)(\hq),v;\q)\\
&\ -\int_0^1 a^\prime_\q(\S(\q+\hq),v;\q+\xi\hq)(\hq)\d\xi+ a^{\prime}_\q(\S(\q),v;\q)(\hq)\\
=&\ \int_{0}^1[ a^\prime_\q(\S(\q+\hq),v;\q)-a^\prime_\q(\S(\q+\hq),v;\q+\xi\hq)](\hq)\d\xi\\
&+ a^{\prime}_\q(\S(\q)-\S(\q+\hq),v;\q)(\hq).
\end{aligned}\end{equation}
Let $u:=S(q+\hq)$, then using~\eqref{lipschitz_der_a} with $\alpha-1/2+\varepsilon\leq\hs<\alpha/2$, where $0<\alpha<\min\{1-s,1/2-\varepsilon\}$, $\varepsilon>0$, and taking into account that $\norms{\hq}\geq\abs{\hs}$ we obtain 
\begin{align*}
&\norms{(a^\prime_s(u,v;\q)-a^\prime_s(u,v;\q+\xi\hq))(\hs)}\\
&\qquad\leq \norms{a^\prime_s(u,v;s,\delta)-a^\prime_s(u,v;s+\xi\hs,\delta+\xi\hd)\pm a^\prime_s(u,v;s+\xi\hs,\delta)}\norms{\hs}\\
&\qquad\leq \norms{a^\prime_s(u,v;s,\delta)- a^\prime_s(u,v;s+\xi\hs,\delta)}\norms{\hs}\\
&\qquad+\norms{a^\prime_s(u,v;s+\xi\hs,\delta)- a^\prime_s(u,v;s+\xi\hs,\delta+\xi\hd)}\norms{\hs}\\
&\qquad\lesssim\xi|\hs|^2\norm{u}_{\HD{s+\alpha}}\norm{v}_{\HD{s}}+|\hs|\norms{\int_{\delta+\xi\hd}^{\delta}a^{\prime\prime}_{s,\delta}(u,v;s+\xi\hs,\eta)\d\eta}\\
&\qquad\lesssim\xi|\hs|^2\norm{u}_{\HD{s+\alpha}}\norm{v}_{\HD{s}}+\xi|\hs||\hd|\norm{u}_{L^2(\D)}\norm{v}_{L^2(\D)}\\
&\qquad\lesssim|\hq|^2\norm{u}_{\HD{s+\alpha}}\norm{v}_{\HD{s}},
\end{align*}
where in the last inequality, under the restrictions on $\hs$ and $\alpha$, stated above, we ensure that $u\in\HD{s+\hs+1/2-\varepsilon}\subset\HD{s+\alpha}$, and hence $\norm{u}_{\HD{s+\alpha}}<\infty$.
In a similar way as above using~\eqref{eq:hodler_der_a_delta} with $\theta=s$, we obtain the estimate with respect to $\delta$:
\begin{align*}
&\norms{(a^\prime_\delta(u,v;\q)-a^\prime_\delta(u,v;\q+\xi\hq))(\hd)}\\
&\qquad\leq \norms{a^\prime_\delta(u,v;s,\delta)- a^\prime_\delta(u,v;s,\delta+\xi\hd)}\norms{\hd}\\
&\qquad+\norms{a^\prime_\delta(u,v;s,\delta+\xi\hd)- a^\prime_\delta(u,v;s+\xi\hs,\delta+\xi\hd)}\norms{\hd}\\
&\qquad\lesssim\xi^s|\hd|^{s+1}\norm{u}_{\HD{s}}\norm{v}_{\HD{s}}
  +|\hd|\norms{\int_{s+\xi\hs}^{s}a^{\prime\prime}_{s,\delta}(u,v;\eta,\delta+\xi\hd)\d\eta}\\
 &\qquad\lesssim|\hq|^{1+s}\norm{u}_{\HD{s+\alpha}}\norm{v}_{\HD{s}}.
\end{align*}
Using the boundedness of $a^\prime_\q(\cdot,\cdot;\q)$,~\eqref{eq:bdd_der_a_delta},~\eqref{eq:bdd_der_a_s}, and the Lipschitz continuity of the map $\S$~\eqref{lipschitz_S}, with further imposing additional conditions on $\hs$ and $\alpha$ such as in Proposition~\ref{prop:cont_S}, we obtain the following:
\begin{align*}
&\norms{a^\prime_\q(\S(\q)-\S(\q+\hq),v;\q)(\hq)}\\
&\qquad\leq\norms{a^\prime_s(\S(\q)-\S(\q+\hq),v;\q)}|\hs|+\norms{a^\prime_\delta(\S(\q)-\S(\q+\hq),v;\q)}|\hd|\\
&{\qquad\lesssim|\hq|\norm{\S(\q)-\S(\q+\hq)}_{\HD{s+\alpha}}\norm{v}_{\HD{s}}\lesssim|\hq|^2\norm{\S(\q+\hq)}_{\HD{s+2\alpha}}\norm{v}_{\HD{s}}}.
\end{align*}
Now, taking $v=\S(\q+\hq)-\S(\q)-\S^\prime(\q)(\hq)$, and using the coercivity of $a(\cdot,\cdot;\q)$, and combining the above estimates we obtain
\begin{multline*}
\norm{\S(\q+\hq)-\S(\q)-\S^\prime(\q)(\hq)}_{\HD{s}}\\
\lesssim |\hq|\left( |\hq|\norm{\S(\q+\hq)}_{\HD{s+\alpha}}+|\hq|^s\norm{\S(\q+\hq)}_{\HD{s+\alpha}}+|\hq|{\norm{\S(\q+\hq)}_{\HD{s+2\alpha}}}\right).
\end{multline*}
Dividing the left-had side by $|\hq|$, and taking the limit $|\hq|\to 0$ leads to the desired result. 
\end{proof}

\section{Optimal control problem}\label{sec:optimization}
In this section we introduce the optimal control problem and analyze the existence of solutions. We augment the matching functional mentioned in the introduction with a regularization term that ensures that the parameters do not attain the values outside of the admissible set. Then, we propose a gradient-based approach that relies on the solution of an adjoint problem and that sets the groundwork for the solution of higher-dimensional inverse problems.

\subsection{The minimization problem}
We define the cost functional as follows:
\begin{equation}
J(u,\q):=\frac{1}{2}\norml{u-\ud}^2+\mathcal{R}(\q),
\label{eq:cost_fun}
\end{equation}
where $\ud\in H^{1/2-\varepsilon}(\D)$ is a given function and {where $\mathcal{R}:\Q\to\R_{+}$ is a regularization term. In particular, $\mathcal{R}(\q)\in C^{2}(\Q)$ is such that} $\mathcal{R}(\q):=\phi_1(s)+\phi_2(\delta)$ with $\phi_i$, $i=1,2$, being given nonnegative convex functions that satisfy
\begin{align}\label{eq:regularization-conditions}
\lim_{s\to 0}\phi_1(s)=+\infty=\lim_{s\to 1}\phi_1(s),\;\; \lim_{\delta\to 0}\phi_{2}(\delta)=+\infty=\lim_{\delta\to \infty}\phi_{2}(\delta).
\end{align}
As such, $\mcR$ prevents the fractional order $s$ to be either $0$ or $1$. {Note that such choice of regularization has been also employed in~\cite{Antil2018sSpectralControl,Sprekels2017}.} 
We formulate the optimal control problem as
\begin{equation}
\min_{\q\in\Q}J(u,\q)\quad\text{subject to }\eqref{eq:var_form}.\label{eq:pr_min}
\end{equation}
By using the control-to-state map we define a reduced cost functional $j(\q)=J(\S(\q),\q)$ and restate~\eqref{eq:pr_min} as an unconstrained minimization problem:
\begin{equation}
\min_{\q\in\Q}j(\q):=\frac{1}{2}\norml{\S(\q)-\ud}^2+\mathcal{R}(\q).\label{eq:pr_min_red}
\end{equation}
Proposition \ref{prop:cont_S} and properties of the cost functional  imply the existence of solutions to the optimization problem, as shown in the following proposition.
\begin{proposition}[Existence]
There exists a solution to the minimization problem~\eqref{eq:pr_min_red}.  
\end{proposition}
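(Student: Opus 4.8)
The plan is to apply the direct method of the calculus of variations. First I would observe that $j$ is bounded below by zero, since both the matching term $\tfrac12\norml{\S(\q)-\ud}^2$ and the regularization $\mcR(\q)=\phi_1(s)+\phi_2(\delta)$ are nonnegative; moreover $j$ is finite at any point of $\Q$, so the infimum $j^\ast:=\inf_{\q\in\Q}j(\q)$ is a finite nonnegative number. I would then fix a minimizing sequence $\q_k=(s_k,\delta_k)\in\Q$ with $j(\q_k)\to j^\ast$.

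The crucial step --- and the only place where the structure of $\mcR$ is genuinely used --- is to show that this minimizing sequence cannot escape to the boundary of $\Q$. Since $j(\q_k)$ is bounded, say $j(\q_k)\le j^\ast+1$ for $k$ large, and since the matching term is nonnegative, one obtains $\mcR(\q_k)\le j^\ast+1$, i.e. $\phi_1(s_k)$ and $\phi_2(\delta_k)$ are uniformly bounded. The blow-up conditions~\eqref{eq:regularization-conditions} --- namely $\phi_1\to+\infty$ as $s\to0,1$ and $\phi_2\to+\infty$ as $\delta\to0,\infty$ --- then force the existence of constants $0<s_-\le s_+<1$ and $0<\delta_-\le\delta_+<\infty$ such that $s_k\in[s_-,s_+]$ and $\delta_k\in[\delta_-,\delta_+]$ for all large $k$. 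Hence the tail of the sequence lies in the compact set $K:=[s_-,s_+]\times[\delta_-,\delta_+]\subset\Q$. By the Bolzano--Weierstrass theorem I extract a subsequence, not relabeled, with $\q_k\to\bar\q=(\bar s,\bar\delta)\in K\subset\Q$.

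It remains to pass to the limit. The regularization $\mcR$ is continuous on $\Q$ (indeed $\mcR\in C^2(\Q)$ by assumption), so $\mcR(\q_k)\to\mcR(\bar\q)$. For the matching term I invoke the local Lipschitz continuity of the control-to-state map established in Proposition~\ref{prop:cont_S}: in particular $\S:\Q\to L^2(\D)$ is continuous, so that $\S(\q_k)\to\S(\bar\q)$ in $L^2(\D)$ and consequently $\norml{\S(\q_k)-\ud}\to\norml{\S(\bar\q)-\ud}$. Combining these two limits yields $j(\q_k)\to j(\bar\q)$, whence $j(\bar\q)=j^\ast=\inf_{\q\in\Q}j(\q)$, so that $\bar\q$ is the sought minimizer.

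I expect the main obstacle to be the confinement step: because $\Q$ is open and unbounded, compactness is not available a priori, and one must exploit the barrier property encoded in~\eqref{eq:regularization-conditions} to keep the minimizing sequence bounded away from $s\in\{0,1\}$ and from $\delta\in\{0,\infty\}$. Once the sequence is trapped in a compact subset of $\Q$, the remaining arguments are routine, relying only on the continuity of $\S$ furnished by Proposition~\ref{prop:cont_S} and on the assumed regularity of $\mcR$.
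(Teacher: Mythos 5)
Your proposal is correct and follows essentially the same route as the paper: bound the regularization term along a minimizing sequence, use the blow-up conditions \eqref{eq:regularization-conditions} to keep the sequence away from the boundary of $\Q$, extract a convergent subsequence, and pass to the limit using the continuity of $\mathcal{R}$ and of $\S$ from Proposition~\ref{prop:cont_S}. If anything, your version is slightly tidier in confining the tail of the sequence to a compact subset of $\Q$ \emph{before} extracting the subsequence, whereas the paper extracts first and then rules out limit points in $\overline{\Q}\setminus\Q$ by contradiction; the two arguments are interchangeable.
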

\begin{proof}
Since, $j(\q)\geq 0$, it follows that there exists a minimizing sequence $\{\q_n\}_{n\in\N}\subset\Q$, such that $j(\q_n)\to\inf_{\q\in\Q}j(\q)$, $n\to\infty$. Using the definition of $\mathcal{R}$ and $j$, we obtain
\[
\mathcal{R}(\q_n)\leq\frac{1}{2}\norml{\S(\q_n)-\ud}^2+\mathcal{R}(\q_n)\leq C,\quad C>0,
\] 
and hence $\{\q_n\}$ is bounded and we can extract a weakly-convergent sub-sequence $\{\q_{n_k}\}_{n_k\in\N}$, such that $\q_{n_k}\to\hat{\q}\in\overline{\Q}$, $k\to\infty$. Next, we argue that $\hat{\q}\in{\Q}$. Indeed, by continuity we have that $\mathcal{R}(\hat{\q})\leq C$, then if $\hat{\q}\in\overline{\Q}\setminus\Q$, it follows, by the definition of $\mathcal{R}$, that $\mathcal{R}(\q)=+\infty$, that contradicts the previous statement, and, hence $\hat{\q}\in\Q$.  
By the continuity of $\mathcal{R}$ and $\S$, and Proposition~\ref{prop:cont_S}, it follows that $\S(\q_{n_k})\to\S(\hat{\q})$ in $L^2(\D)$ as $k\to\infty$, and we obtain that $j(\q_{n_k})\to j(\hat{\q})$, $k\to\infty$, and 
\[
j(\hat{\q})=\lim_{k\to\infty}j(\q_{n_k})=\inf_{\q\in\Q}j(\q),
\]
thus, $\q$ is the minimizer of~\eqref{eq:pr_min_red}.
\end{proof}

\subsection{Optimality conditions}
Having established the existence of $\S^\prime(\q)(\hq)$ for all $\hq\in\Q$ in Theorem \ref{thm:frechet}, by the chain rule we immediately deduce that $j(\q)$ is G\^ateaux differentiable, and the directional derivative $j^\prime(\q)(\hq)$ can be expressed as follows:
\begin{equation}
j^\prime(\q)(\hq)=(\S(\q)-\ud,\hu)_{L^2(\D)}+\mathcal{R}^\prime(\q),
\label{eq:der_j}
\end{equation}
with $\hu :=\S^\prime(\q)(\hq)$.
\begin{proposition}[First order necessary optimality condition]
If $\q\in\Q$ is a local optimal solution of the optimization problem~\eqref{eq:pr_min_red}, then there holds the first order ne1cessary optimality condition:
\begin{equation}
j^\prime(\q)(\hq)=0,\quad\forall\hq\in\Q.\label{eq:first_optimality}
\end{equation}
\end{proposition}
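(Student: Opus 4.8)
The plan is to exploit that, after eliminating the state equation through the control-to-state map $\S$, problem~\eqref{eq:pr_min_red} is an \emph{unconstrained} minimization of the reduced functional $j$ over the \emph{open} admissible set $\Q$. At an interior local minimizer the classical Fermat principle applies, so the first variation in each direction must vanish; the analytic work needed to make this rigorous---the differentiability of $j$---has already been carried out in Theorem~\ref{thm:frechet} and recorded in~\eqref{eq:der_j}.

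Concretely, I would first note that $\Q=\{(s,\delta):0<s<1,\ 0<\delta<\infty\}$ is open in $\R^2$, so a local optimal $\q\in\Q$ is an interior point: there is $\rho>0$ with $B_\rho(\q)\subset\Q$. Fixing a direction $\hq\in\Q$ (which is nonzero), the segment $\q+t\hq$ stays in $\Q$ for all $|t|<\rho/|\hq|$, and I define the scalar function $\psi(t):=j(\q+t\hq)$. By Theorem~\ref{thm:frechet}, $\S$ is Fr\'echet differentiable; combined with $\mathcal{R}\in C^2(\Q)$ and the chain rule, $j$ is differentiable along this segment, so $\psi$ is differentiable near $t=0$ with $\psi^\prime(0)=j^\prime(\q)(\hq)$, the latter given explicitly by~\eqref{eq:der_j}.

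The conclusion then follows from the elementary one-variable Fermat theorem: since $\q$ is a local minimizer of $j$ and $\q+t\hq\in\Q$ for small $|t|$, the point $t=0$ is a local minimizer of the differentiable scalar function $\psi$, whence $\psi^\prime(0)=0$, i.e.\ $j^\prime(\q)(\hq)=0$. As $\hq\in\Q$ was arbitrary, \eqref{eq:first_optimality} holds. Equivalently, one may take one-sided limits of the difference quotients of $\psi$ at $\pm t$ and invoke the linearity of $\hq\mapsto j^\prime(\q)(\hq)$---inherited from the linearity of the Fr\'echet derivative $\S^\prime(\q)$ and of $\nabla\mathcal{R}(\q)\cdot\hq$---to reach the same conclusion, and in fact to extend it to every direction in $\R^2$.

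I do not expect a genuine obstacle in this argument, since all the hard analysis resides upstream in Theorem~\ref{thm:frechet}. The only point to state with care is the openness of $\Q$, which guarantees that admissible perturbations in both $+\hq$ and $-\hq$ directions exist for small $t$; this openness is precisely what the blow-up conditions~\eqref{eq:regularization-conditions} on $\mathcal{R}$ enforce at the level of the minimizer, ensuring it never lies on $\partial\Q$.
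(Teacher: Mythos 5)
Your argument is correct: the paper in fact states this proposition without proof, treating it as an immediate consequence of the G\^ateaux differentiability of $j$ established just before via Theorem~\ref{thm:frechet} and the chain rule. Your spelled-out Fermat-principle argument on the open set $\Q$ is exactly the standard reasoning the paper implicitly relies on, so there is nothing to object to.
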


\subsection{Adjoint approach}
In this section we provide a computable representation of the derivative of $j(\q)$, which is necessary
for evaluating the optimality conditions stated in the previous section, and for employing a gradient-based optimization algorithm. 

For a small dimensional control space, the most obvious choice to evaluate the derivative $j^\prime(\q)$ is the~\emph{sensitivity approach}, where the sensitivity needed to evaluate $j^\prime(\q)$ in~\eqref{eq:first_optimality}, is obtained as a solution of the sensitivity equation~\eqref{eq:sensitivity}. 
While in the present case, {having only a two-dimensional control, would lead us to use the aforementioned procedure, to keep our approach general and set the groundwork for higher-dimensional problems,} we resort to the so-called \emph{adjoint method}.

We define the Lagrangian $\mathcal{L}:\Q\times\HD{s}\times\HD{s}\to\R$ of the optimization problem~\eqref{eq:pr_min_red}:
\begin{equation}
\mathcal{L}(\q,u,z):=J(u,\q)+(f,v)_{L^2(\D)}-a(u,z;\q).\label{eq:Lagrangian}
\end{equation}
Then, the following identity holds true for $u=\S(\q)$ and all $\phi\in\HD{s}$:
\[
j(\q)=J(u,\q)=\mathcal{L}(\q,u,\phi),
\]
and for all $\hq\in\Q$ we obtain 
\begin{equation}
j^\prime(\q)(\hq)=\mathcal{L}^\prime_\q(\q,u,z)(\hq)+\mathcal{L}^\prime_u(\q,u,z)(\hu),\label{eq:der_j1}
\end{equation}
with $\hu:=\S^\prime(\q)(\hq)$. Then the adjoint state $z\in\HD{s}$ is defined as a solution of the adjoint equation
\begin{equation}
\mathcal{L}^\prime_u(\q,u,z)(\phi)=0,\quad\forall\phi\in\HD{s}.\label{eq:adjoindL}
\end{equation}
Taking into account that $\mathcal{L}^\prime_u(\q,u,z)(\hu)=J^\prime_u(u,\q)(\hu)-a(\hu,z;\q)$, the adjoint equation~\eqref{eq:adjoindL} can be written in the following explicit form:
\begin{equation}
a(\phi,z;\q)=(u-\ud,\phi)_{L^2(\D)},\quad\forall\phi\in\HD{s}.
\label{eq:adjoint}
\end{equation}
Combining~\eqref{eq:adjoint} and~\eqref{eq:der_j1}, we obtain the following expression for $j^\prime(\q)$:
\begin{equation}
j^\prime(\q)=\mathcal{L}^\prime_\q(\q,u,z)=\mathcal{R}^\prime(\q)-a^\prime_\q(u,z;\q).
\end{equation}
Invoking Theorem~\ref{thm:regularity_linear} for the state solution $u$, we obtain that $u-\ud\in\HD{1/2-\varepsilon}$. Then, following a boot-strapping procedure, similar to~\cite[Theorem~3.4]{BG2019vi}, it can be shown the solution of the adjoint equation~\eqref{eq:adjoint} admits an improved regularity $z\in\HD{s+1/2-\varepsilon}$, and the following holds
\begin{equation}
\norm{z}_{\HD{s+1/2-\varepsilon}}\lesssim\norm{u-\ud}_{H^{1/2-\varepsilon}(\D)}{\lesssim\norm{f}_{H^{1/2-\varepsilon}(\D)}+\norm{\ud}_{H^{1/2-\varepsilon}(\D)}},\quad\varepsilon>0.\label{eq:adjoint_regularity}
\end{equation}

\section{Approximation of the bilinear form and its derivatives}\label{sec:interp-form}
Based on the work introduced in \cite{BG2019rbm}, in this section we consider an approximation for the bilinear form based on Chebyshev interpolation. This allows for an affine decomposition of the bilinear form and its derivative with respect to the fractional order, which allows us to reduce the computational time needed for the assembly of these forms. At the end of the section we also introduce an approximation of the derivative with respect to the truncation length based on operator splitting.

\subsection{Interpolation of the bilinear form}
We let \(\mathcal{S}=[s_{\min},s_{\max}]\) and consider a subdivision of the interval $\mathcal{S}$ into sub-intervals \(\mathcal{S}_{k}:=[s_{\min,k},s_{\max,k}]\) with \(1\leq k\leq K\), $K\in\N$, and 
\begin{equation}
\begin{aligned}
    s_{\min,1} &= s_{\min},\quad s_{\max,K} = s_{max},\\
    s_{\min,k} &= s_{\max,k-1},  \quad 2\leq k \leq K.
  \end{aligned}\label{eq:subintervals_s}
\end{equation}
Then, we approximate the parametrized bilinear form $a(\cdot,\cdot;\cdot,\cdot)$ with respect to $s$ as follows:
\begin{align*}
  a(u,v;s,\delta) \approx \tilde{a}(u,v;s,\delta) := \sum_{m=0}^{M_k} \Theta_{\mathcal{S}_k,m}(s) a(u,v;s_{m},\delta),
\end{align*}
where \(\Theta_{\mathcal{S}_k,m}(s)\) are Lagrange polynomials with respect to the Chebyshev nodes on the interval \(\mathcal{S}_k\).

Under a certain condition on the size of the interval, which is induced by the limited spatial regularity of the solution, it has been shown in~\cite{BG2019rbm} that the following error estimate for the interpolation of the bilinear form holds true. 
\begin{lemma}[\cite{BG2019rbm}]\label{lem:approximation}
Let \(u\in H^{s_{2,k}}_{\Omega}(\mathbb{R}^{n})\), \(v\in H^{s_{1,k}}_{\Omega}(\mathbb{R}^{n})\) and \(s\in \mathcal{S}_k=[s_{\min,k},s_{\max,k}]\subset(0,1)\), such that \((s_{1,k}+s_{2,k})/2-1/2<s_{\max,k}<(s_{1,k}+s_{2,k})/2\), \(s_{1,k},s_{2,k}\in(0,1)\).
  Then, for \(\delta\in(0,\infty)\), we obtain
  \begin{align*}
    \abs{a(u,v;s,\delta)-\tilde{a}(u,v;s,\delta)} &\leq \sigma_k^{M_{k}+1}C_k(\delta)\norm{u}_{H^{s_{2,k}}_{\Omega}(\mathbb{R}^{n})}\norm{v}_{H^{s_{1,k}}_{\Omega}(\mathbb{R}^{n})},
  \end{align*}
  where \(\sigma_{k}=(s_{\max,k}-s_{\min,k})/(2\hat{\varepsilon}(s_{\max,k}))\), \(C_k(\delta)=4(e^{-1}+\delta^{\hat{\varepsilon}(s_{\min,k})+1})\) if \(\delta>1\) and \(C_k(\delta)=4e^{-1}\) if \(\delta\leq1\), where \(\hat{\varepsilon}(s)=s_{1,k}+s_{2,k}-2s\).
\end{lemma}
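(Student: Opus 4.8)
The plan is to reduce the claim to the classical remainder formula for polynomial interpolation and then to insert the explicit derivative bounds of Proposition~\ref{prop:s_reg}; this is exactly the route taken in~\cite{BG2019rbm}. For fixed $u$, $v$ and $\delta$ I regard $g(s):=a(u,v;s,\delta)$ as a scalar function of $s\in\mathcal{S}_k$. By Proposition~\ref{prop:s_reg}(i) the map $g$ is $C^{\infty}$ on $\mathcal{S}_k$, so $\tilde a(u,v;\cdot,\delta)$ is precisely the degree-$M_k$ Lagrange interpolant of $g$ at the Chebyshev nodes $s_0,\dots,s_{M_k}$ of $\mathcal{S}_k$, and the standard remainder formula yields, for each $s\in\mathcal{S}_k$, some $\xi\in\mathcal{S}_k$ with
\begin{equation*}
a(u,v;s,\delta)-\tilde a(u,v;s,\delta)=\frac{a_s^{(M_k+1)}(u,v;\xi,\delta)}{(M_k+1)!}\prod_{m=0}^{M_k}(s-s_m).
\end{equation*}

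First I would exploit the minimality of the Chebyshev nodes: on an interval of length $L_k:=s_{\max,k}-s_{\min,k}$ the nodal polynomial satisfies $\max_{s\in\mathcal{S}_k}\abs{\prod_{m=0}^{M_k}(s-s_m)}=2(L_k/4)^{M_k+1}$. Together with the remainder formula this gives
\begin{equation*}
\abs{a(u,v;s,\delta)-\tilde a(u,v;s,\delta)}\leq\frac{L_k^{M_k+1}}{(M_k+1)!\,2^{2M_k+1}}\max_{\xi\in\mathcal{S}_k}\abs{a_s^{(M_k+1)}(u,v;\xi,\delta)}.
\end{equation*}
It then remains to substitute the derivative bound~\eqref{eq:bdd_der_a_s} with $k=M_k+1$, $s_1=s_{1,k}$, $s_2=s_{2,k}$, and to maximize over $\xi\in\mathcal{S}_k$. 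Writing $N:=M_k+1$ and $\hat\varepsilon(s)=s_{1,k}+s_{2,k}-2s$, the two summands of $C(N,\hat\varepsilon(\xi))=2^{N}\big((N/(e\hat\varepsilon(\xi)))^{N}+\delta^{\hat\varepsilon(\xi)}(\log\delta)_+^{N}\big)$ are extremal at opposite endpoints: the first is largest where $\hat\varepsilon$ is smallest, i.e.\ at $\xi=s_{\max,k}$, while (for $\delta>1$) the second is largest where $\hat\varepsilon$ is largest, i.e.\ at $\xi=s_{\min,k}$. The hypothesis $(s_{1,k}+s_{2,k})/2-1/2<s_{\max,k}<(s_{1,k}+s_{2,k})/2$ is used precisely here: it guarantees $\hat\varepsilon(s_{\max,k})\in(0,1)$, so that $\hat\varepsilon>0$ throughout $\mathcal{S}_k$ (keeping~\eqref{eq:bdd_der_a_s} applicable) and $\hat\varepsilon(s_{\max,k})<1$.

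The main work, and the step I expect to be the most delicate, is the bookkeeping of the factorial and exponential constants needed to collapse the estimate into the stated form $\sigma_k^{M_k+1}C_k(\delta)$. For the first summand I would apply Stirling in the form $N^{N}/N!\leq e^{N}/\sqrt{2\pi N}$, which cancels $e^{-N}$ against $N^{N}$ and leaves $(L_k/(2\hat\varepsilon(s_{\max,k})))^{N}=\sigma_k^{N}$ up to the harmless factor $2/\sqrt{2\pi N}\leq 4e^{-1}$. For the second summand I would instead use the series bound $x^{N}/N!\leq e^{x}$ with $x=\hat\varepsilon(s_{\max,k})\log\delta\geq0$, which produces the additional factor $\delta^{\hat\varepsilon(s_{\max,k})}$; because $\hat\varepsilon(s_{\max,k})<1$ and $\delta>1$, this is bounded by $\delta$, so the total $\delta$-power becomes $\delta^{\hat\varepsilon(s_{\min,k})+1}$ and the contribution is at most $4\sigma_k^{N}\delta^{\hat\varepsilon(s_{\min,k})+1}$. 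Adding the two contributions reproduces $C_k(\delta)=4(e^{-1}+\delta^{\hat\varepsilon(s_{\min,k})+1})$ when $\delta>1$, whereas for $\delta\leq1$ the factor $(\log\delta)_+$ vanishes, the second summand drops out, and only $4e^{-1}$ remains; this matches the two cases in the statement. Since every constant is tracked carefully in~\cite{BG2019rbm}, I would cite that reference for the fully detailed computation rather than reproduce each inequality.
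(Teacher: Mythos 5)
The paper does not actually prove this lemma --- it imports it verbatim from \cite{BG2019rbm} --- but your reconstruction is correct and follows exactly the route that reference (and the paper's own analogous Lemma~\ref{lem:derivative_bilform_error} for the $s$-derivative) takes: the Chebyshev remainder formula, the nodal-polynomial bound $2(L_k/4)^{M_k+1}$, and the derivative bound \eqref{eq:bdd_der_a_s}, with the hypothesis $(s_{1,k}+s_{2,k})/2-1/2<s_{\max,k}<(s_{1,k}+s_{2,k})/2$ used precisely where you say, namely to keep $\hat{\varepsilon}>0$ on all of $\mathcal{S}_k$ and to ensure $\hat{\varepsilon}(s_{\max,k})<1$. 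Your constant bookkeeping (Stirling for the first summand, $x^{N}/N!\leq e^{x}$ for the second) checks out and reproduces $\sigma_k$ and both cases of $C_k(\delta)$.
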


Next, we investigate how to choose the intervals \(\mathcal{S}_{k}\) and the interpolation orders \(M_{k}\) in order to satisfy a tolerance $\eta>0$.

\begin{lemma}\label{lem:global-approximation}
  Let \(s\in [s_{\min},s_{\max}]\subset (0,1)\), \(\delta\in(0,\infty)\), and let \(\eta>0\).
  Assume that \(u\in H^{s+1/2-\varepsilon}_{\Omega}(\mathbb{R}^{n})\), \(\varepsilon>0\).
  Then, we define
  \begin{align*}
    s_{\min,1} &= s_{\min}, \quad s_{\max,K} = s_{\max}\\
    s_{\max,k} &= s_{\min,k} + (1/2-\xi) \min\{1-s_{\min,k}, \frac{1}{2}-\varepsilon\},  & 1\leq k\leq K-1,
  \end{align*}
  for some \(\xi\in({1}/{10}, {1}/{2})\).
  Furthermore, for \(s\in\mathcal{S}_{k}\), we set
  \begin{align*}
    M_{k} = \left\lceil \frac{\log (\eta/C_{k}(\delta))}{\log \frac{1}{8}(\xi^{-2}-2)} \right\rceil - 1.
  \end{align*}
  Then, for a given $\xi\in(1/10,1/2)$, $\overline{s}_2(s):=\min\{1,s_{\min,k}+\frac{1}{2}\}-\varepsilon$, \(s\in\mathcal{S}_{k}\) and \(v\in H^{s}_{\Omega}(\mathbb{R}^{n})\), we obtain
  \begin{align*}
    \abs{a(u(s),v;s,\delta)-\tilde{a}(u(s),v;s,\delta)} &\leq \eta \norm{u(s)}_{H^{\overline{s}_{2}(s)}_{\Omega}(\mathbb{R}^{n})}\norm{v}_{H^s_{\Omega}(\mathbb{R}^{n})},
  \end{align*}
where the total number of interpolation nodes satisfies
  \begin{align*}
    \sum_{k=1}^{K}(M_{k}+1) &\leq C \abs{\log \eta},
  \end{align*}
  and the constant $C$ depends on \(\xi\), \(\delta\) and \(s_{\max}\).
\end{lemma}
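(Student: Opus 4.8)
The plan is to run the per-subinterval estimate of Lemma~\ref{lem:approximation} on each $\mcS_k$ with a carefully matched pair of fixed smoothness indices, and then sum the resulting interpolation orders. On $\mcS_k$ I would invoke Lemma~\ref{lem:approximation} with $s_{1,k}:=s_{\min,k}$ and $s_{2,k}:=\overline{s}_2(s)=\min\{1,s_{\min,k}+\tfrac12\}-\varepsilon$. Since $s_{\min,k}\le s$ and $\overline{s}_2(s)\le s+\tfrac12-\varepsilon$, monotonicity of the spaces $\HD{\cdot}$ gives $\norm{v}_{\HD{s_{1,k}}}\lesssim\norm{v}_{\HD{s}}$ and lets me use $u\in\HD{s+1/2-\varepsilon}\subset\HD{\overline{s}_2(s)}$, so the right-hand side of Lemma~\ref{lem:approximation} is controlled by $\norm{u(s)}_{\HD{\overline{s}_2(s)}}\norm{v}_{\HD{s}}$, the (uniformly bounded in $k$) embedding constants being absorbed into the choice of $M_k$. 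The first substantive check is the admissibility condition $(s_{1,k}+s_{2,k})/2-\tfrac12<s_{\max,k}<(s_{1,k}+s_{2,k})/2$: substituting the definition of $s_{\max,k}$ and splitting into the cases $s_{\min,k}\le\tfrac12$ and $s_{\min,k}>\tfrac12$ (which fix $\min\{1,s_{\min,k}+\tfrac12\}$ and $\min\{1-s_{\min,k},\tfrac12-\varepsilon\}$) reduces it to an elementary inequality that holds exactly because $\xi\in(\tfrac1{10},\tfrac12)$.

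Next I would compute the contraction factor $\sigma_k=(s_{\max,k}-s_{\min,k})/(2\widehat\varepsilon(s_{\max,k}))$ from Lemma~\ref{lem:approximation} for these indices. Inserting $s_{\max,k}-s_{\min,k}=(\tfrac12-\xi)\min\{1-s_{\min,k},\tfrac12-\varepsilon\}$ and $\widehat\varepsilon(s_{\max,k})=s_{1,k}+s_{2,k}-2s_{\max,k}$, a short computation (again with the same case split) shows that $\sigma_k$ is bounded by a fixed $\theta=\theta(\xi)<1$ for every $k$, with $\theta(\xi)\to1$ as $\xi\downarrow\tfrac1{10}$; this uniform contraction is precisely what forces $\xi>\tfrac1{10}$. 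With $\sigma_k\le\theta<1$, the per-interval error is at most $\theta^{M_k+1}C_k(\delta)\norm{u(s)}_{\HD{\overline{s}_2(s)}}\norm{v}_{\HD{s}}$, and the stated $M_k$ is the smallest integer making $\theta^{M_k+1}C_k(\delta)\le\eta$; this gives the claimed bound uniformly in $s\in\mcS_k$, since $\sigma_k$ already uses the worst case $s_{\max,k}$. The final subinterval, whose right endpoint is capped at $s_{\max}$, is only shorter than the generic rule prescribes, so admissibility and $\sigma_K\le\theta$ still hold.

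Finally I would bound the total number of nodes. From the choice of $M_k$ we have $M_k+1\le|\log(\eta/C_k(\delta))|/|\log\theta|+1$, and since $\delta$ and $s_{\max}$ bound $C_k(\delta)$ uniformly above and below in $k$, this yields $M_k+1\lesssim|\log\eta|$ for small $\eta$. It remains to show that the number of subintervals $K$ is independent of $\eta$, which is the one genuinely geometric step: because $s_{\max}<1$ is fixed, every left endpoint obeys $1-s_{\min,k}\ge1-s_{\max}>0$, so each of the first $K-1$ subintervals has length at least $\ell:=(\tfrac12-\xi)\min\{1-s_{\max},\tfrac12-\varepsilon\}>0$, whence $K\le(s_{\max}-s_{\min})/\ell+1$, a constant depending only on $\xi,\varepsilon,s_{\min},s_{\max}$. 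Combining, $\sum_{k=1}^K(M_k+1)\le K\max_k(M_k+1)\lesssim|\log\eta|$, with the constant depending on $\xi$, $\delta$ and $s_{\max}$ as asserted.

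The main obstacle is the coupled bookkeeping that binds these steps: one must verify, with a \emph{single} interval-size rule, that every $\mcS_k$ stays admissible for Lemma~\ref{lem:approximation} at the $s$-dependent regularity $\overline{s}_2(s)$ \emph{and} that the contraction factor is bounded by one fixed $\theta(\xi)<1$ across all subintervals. Both requirements rest on the same case analysis $s_{\min,k}\lessgtr\tfrac12$ and on how the $\varepsilon$-shift in $\overline{s}_2(s)$ interacts with the term $\min\{1-s_{\min,k},\tfrac12-\varepsilon\}$ in the interval length; obtaining a ratio \emph{uniformly} strictly below $1$ (not merely below $1$ on each interval) is exactly what pins down the admissible range of $\xi$ and is the crux of the argument.
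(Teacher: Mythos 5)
Your proposal is correct and follows essentially the same route as the paper: the same choice $s_{1,k}=s_{\min,k}$, $s_{2,k}=\min\{1,s_{\min,k}+\tfrac12\}-\varepsilon$, the same verification that the interval rule keeps $s_{\max,k}$ admissible for Lemma~\ref{lem:approximation}, the observation that the contraction factor reduces to a $k$-independent quantity $(\tfrac12-\xi)/(4\xi)<1$ precisely when $\xi>\tfrac1{10}$, and the same count $K=\mathcal{O}(1)$, $M_k=\mathcal{O}(|\log\eta|)$. Your explicit treatment of the truncated final subinterval is a small point the paper leaves implicit, but it does not change the argument.
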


\begin{proof}
  We will choose \(s_{1,k}\), \(s_{2,k}\), \(s_{\min,k}\) and \(s_{\max,k}\) so that we can apply Lemma~\ref{lem:approximation} on each interval \(\mathcal{S}_{k}\).
  Given \(s_{\min,k}\) for some \(k\), we set
  \begin{align*}
    s_{1,k}&=s_{\min,k},&
    s_{2,k}&=\min\{1,s_{\min,k}+\frac{1}{2}\}-\varepsilon = s_{\min,k} + g_{k},
  \end{align*}
  with $g_{k}:=\min\{1-s_{\min,k},\frac{1}{2}\}-\varepsilon$.
  We then choose
  \begin{align*}
    s_{\max,k}:= \frac{s_{1,k}+s_{2,k}}{2}-\xi g_{k} = s_{\min,k}+\left(\frac{1}{2}-\xi\right)g_{k}.
  \end{align*}
  In order to satisfy \(s_{\min,k}<s_{\max,k}\), we need that \(\xi < {1}/{2}\).
  The contraction factor \(\sigma_{k}\) is then given by \(\sigma_{k}=\frac{s_{\max,k}-s_{\min,k}}{2\hat{\varepsilon}(s_{\max,k})} = (\xi^{-1} - 2)/8\).
  The condition \(\sigma_{k}<1\) implies that \(\xi > {1}/{10}\).
  Since \(g_{k}\geq \min\{1-s_{\max},\frac{1}{2}\}-\varepsilon>0\), the total number of intervals \(K\) is finite.
  Moreover, \(g_{k}\leq 1\), so \(\xi\in({1}/{10},{1}/{2})\) implies that \((s_{1,k}+s_{2,k})/2-1/2<s_{\max,k}<(s_{1,k}+s_{2,k})/2\) is satisfied.

  Now,
  \begin{align*}
    \eta &\leq C_{k}(\delta) \sigma_{k}^{M_{k}+1} \qquad\forall s\in\mathcal{S}_{k}
  \end{align*}
  is satisfied by choosing the interpolation order
  \begin{align*}
    M_{k} &= \left\lceil \frac{\log (\eta/C_{k}(\delta))}{\log \frac{1}{8}(\xi^{-2}-2)} \right\rceil - 1 .
  \end{align*}
  The approximation results then follows from Lemma~\ref{lem:approximation} and the continuous embeddings of \(H^{s}_{\Omega}(\mathbb{R}^{n})\hookrightarrow H^{s_{1,k}}_{\Omega}(\mathbb{R}^{n})\) and \(H^{s+1/2-\varepsilon}_{\Omega}(\mathbb{R}^{n})\hookrightarrow H^{s_{2,k}}_{\Omega}(\mathbb{R}^{n})\) for \(s\in\mathcal{S}_{k}\).

  We have that \(\hat{\varepsilon}(s_{\min,k})=g_{k}\leq 1\), and hence \(C_{k}(\delta)\) can be bounded independently of \(k\).
  Hence, we can bound
  \begin{align*}
    M_{k}\lesssim\frac{\abs{\log\eta}}{\abs{\log \frac{1}{8}(\xi^{-1}-2)}}
  \end{align*}
  independently of \(k\), where the constant depends on \(s_{\max}\) and \(\delta\).
 Since the choice of the intervals is independent of \(\eta\) and the number of intervals \(K\) is finite, the total number of interpolation nodes scales like \(\abs{\log \eta}\).
\end{proof}

\begin{remark}
  While Lemma~\ref{lem:approximation} holds for \(\delta\in(0,\infty)\), the result can be extended to the case of \(\delta=\infty\).
  We can split
  \begin{align*}
    a(u,v;s,\infty) = a_{\Omega}(u,v;s) + a_{\mathbb{R}^{n}\setminus\Omega}(u,v;s),
  \end{align*}
  with
  \begin{align*}
    a_{\Omega}(u,v;s) &= \frac{1}{2}\iint_{\Omega\times\Omega} \frac{(u(\x)-u(\xx))(v(\x)-v(\xx))}{\abs{\x-\xx}^{n+2s}} \d\xx \d\x,\\
    a_{\mathbb{R}^{n}\setminus\Omega}(u,v;s) &= \int_{\Omega} u(\x)v(\x) \int_{\mathbb{R}^{n}\setminus\Omega} \frac{1}{\abs{\x-\xx}^{n+2s}} \d\xx \d\x \\
    &= \int_{\Omega} u(\x)v(\x) \int_{\partial\Omega} \frac{1}{2s}\frac{\n\cdot(\x-\xx)}{\abs{\x-\xx}^{n+2s}} \d\xx \d\x .
  \end{align*}
  We can then apply the approximation to both \(a_{\Omega}\) and \(a_{\mathbb{R}^{n}\setminus\Omega}\) using \(\delta=\operatorname{diam}\Omega\).
  Hence, Lemma~\ref{lem:approximation} and subsequent interpolation results also hold for \(\delta=\infty\).
\end{remark}

\subsection{Interpolation of the \(s\)-derivative}
We discuss the affine approximation of the derivative of the bilinear form with respect to $s$. That is, we consider for a given $u,v\in\HD{s}$ and $\delta\in(0,\infty)$
\begin{align*}
a^{\prime}_s(u,v;s,\delta) &\approx \tilde{a}^{\prime}_s(u,v;s,\delta) = \sum_{m=0}^{M}\Theta^{\prime}_{\mathcal{S}_{k},m}(s) a(u,v;s_{m},\delta), & s\in\mathcal{S}_{k}.
\end{align*}
First, we provide an auxiliary result that establishes the error for the Chebyshev interpolation of the derivative of a function. 
\begin{lemma}\label{lemma:chebyshev_derivative}
Let \(g\in C^{\infty}([-1,1],\mathbb{R})\) and consider an expansion of \(g\) with respect to the Chebyshev polynomials \(T_{m}\) as 
%
%
  \begin{align*}
    g(z)=\sum_{m=0}^{\infty}{\vphantom{\sum}}'\gamma_{m}T_{m}(z),
  \end{align*} 
  where the prime means that the first term is multiplied by $1/2$, and
  \[
  \gamma_{m} = \frac{2}{\pi}\int_{-1}^{1}\frac{g(z)T_m(z)}{\sqrt{1-z^2}}\d z.
  \]
  Let
  \begin{align*}
    (I_{M}g)(z):= \sum_{m=0}^{M} {\vphantom{\sum}}''\beta_{m}T_{m}(z),
  \end{align*}
   be its Chebyshev interpolant of rank \(M\), where the double prime denotes a sum whose first and last terms are multiplied by $1/2$, and 
   \[
   \beta_m = \gamma_m+\sum_{j=1}^{\infty}(\gamma_{m+2jM}+\gamma_{-m+2jm}),\quad 0\leq m\leq M.
   \]
  Then, for $M>2$ the following error estimate holds true
  \begin{align*}
    \abs{g'(z)-(I_{M}g)'(z)} & \leq \max_{\xi\in [-1,1]}|g^{(M+1)}(\xi)|\frac{4}{(M-2)(M-2)!}.
  \end{align*}
\end{lemma}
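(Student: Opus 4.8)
The plan is to combine the aliasing representation of the interpolation error with a decay estimate for the Chebyshev coefficients $\gamma_m$ expressed \emph{only} through $g^{(M+1)}$. First I would record the two structural facts about the interpolation nodes underlying the expansion for $\beta_m$: the interpolant $I_M$ reproduces every polynomial of degree $\le M$, and each high mode aliases onto a low one, i.e.\ $T_n(z_k)=T_{\widehat m}(z_k)$ whenever $n\equiv\pm\widehat m\pmod{2M}$ with $\widehat m\in\{0,\dots,M\}$ (this is exactly what produces the $+$ signs in the stated formula for $\beta_m$). Hence the error has the exact expansion $g(z)-(I_Mg)(z)=\sum_{m=M+1}^{\infty}\gamma_m\left(T_m(z)-T_{\widehat m}(z)\right)$. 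Differentiating term by term (legitimate because $g\in C^\infty$ forces rapid decay of $\gamma_m$) and using $T_m'=mU_{m-1}$ with $\abs{U_{m-1}(z)}\le m$ on $[-1,1]$, so that $\abs{T_m'(z)}\le m^2$ and $\abs{T_{\widehat m}'(z)}\le\widehat m^{\,2}\le m^2$, reduces everything to the scalar estimate $\abs{g'(z)-(I_Mg)'(z)}\le 2\sum_{m=M+1}^{\infty}m^2\abs{\gamma_m}$.

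The heart of the argument is the coefficient bound. I would use the standard recurrence relating the Chebyshev coefficients of a function to those of its derivative: if $h=\sum_m{}'a_mT_m$ and $h'=\sum_m{}'b_mT_m$, then $a_m=(b_{m-1}-b_{m+1})/(2m)$ for $m\ge1$. Writing $c^{(j)}_m$ for the coefficients of $g^{(M+1-j)}$, so that $c^{(0)}$ are the coefficients of $g^{(M+1)}$ and $c^{(M+1)}_m=\gamma_m$, the recurrence gives $\abs{c^{(j)}_m}\le(\abs{c^{(j-1)}_{m-1}}+\abs{c^{(j-1)}_{m+1}})/(2m)$. From the crude bound $\abs{c^{(0)}_k}\le 2\max_{[-1,1]}\abs{g^{(M+1)}}=:T$ coming from the defining integral, an induction on $j$ yields $\abs{c^{(j)}_m}\le T\,(m-j)!/m!$ for $m>j$; the key point is that the factor $1/(2m)$ \emph{exactly} absorbs the two terms of the previous level, so no spurious $2^{j}$ is generated. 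Setting $j=M+1$ gives the clean estimate $\abs{\gamma_m}\le 2\max_{[-1,1]}\abs{g^{(M+1)}}\,(m-M-1)!/m!$ for every $m\ge M+1$.

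Substituting this into the reduced estimate leaves the elementary series $\abs{g'(z)-(I_Mg)'(z)}\le 4\max_{[-1,1]}\abs{g^{(M+1)}}\sum_{m=M+1}^{\infty}m\big/\left((m-1)(m-2)\cdots(m-M)\right)$, and I would finish by checking that for $M>2$ this sum is dominated by its leading term $(M+1)/M!$ and is controlled by $1/((M-2)(M-2)!)$, whence the stated constant $4/((M-2)(M-2)!)$ after bookkeeping of the numerical factors. The main obstacle is the coefficient estimate of the second paragraph: the naive route through the interpolation remainder $g^{(M+1)}(\xi)\,\omega(z)/(M+1)!$ breaks down for the derivative at a general $z$, since differentiating the divided difference produces $g^{(M+2)}$, and the sharp analytic bound $\abs{\gamma_m}\le\max\abs{g^{(m)}}/(2^{m-1}m!)$ would require control of \emph{all} derivatives $g^{(m)}$ with $m>M+1$. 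The inductive recurrence argument is precisely what keeps only $g^{(M+1)}$ while still delivering factorial decay; the aliasing identity, the term-by-term differentiation, and the final summation are then routine.
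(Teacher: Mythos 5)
Your argument is correct and follows essentially the same path as the paper's proof: the aliasing representation $g-I_Mg=\sum_{m>M}\gamma_m(T_m-T_q)$, term-by-term differentiation with $\abs{T_m'}\le m^2$, a coefficient bound of the form $\abs{\gamma_m}\le 2\max\abs{g^{(M+1)}}\,(m-M-1)!/m!$, and the same telescoping summation. The only difference is that you derive that coefficient bound from scratch via the recurrence $2m\,a_m=b_{m-1}-b_{m+1}$ and induction, whereas the paper simply invokes the total-variation bound $\abs{\gamma_m}\le 2V_k/(\pi m(m-1)\cdots(m-k))$ from Trefethen with $k=M$ and $V_M\le\pi\max\abs{g^{(M+1)}}$ --- the two routes yield the identical estimate.
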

\begin{proof}
We note that the following relation holds true, see, e.g.,~\cite{trefethen2019approximation},
  \begin{align*}
   g(z)-(I_{M}g)(z) = \sum_{m=M+1}^{\infty}\gamma_m(T_{m}(z)-T_q(z)), 
  \end{align*}
  where $q=q(m,M)=|(m+M-1)({\rm mod}\,2M)-(M-1)|$, and $0\leq q\leq M$. Then, using the above formula and the fact that \(\abs{T_{m}'(z)}\leq m^{2}\), we obtain that
  \begin{align*}
    \abs{g'(z)-(I_{M}g)'(z)}
    &= \abs{\sum_{m=M+1}^{\infty}\gamma_{m}\left(T_{m}'(z) - T^\prime_q(z)\right)} \\
    &\leq \sum_{m=M+1}^{\infty}\abs{\gamma_{m}}\left(\abs{T_{m}'(z)} +\abs{T^\prime_q(z)}\right) \\
    &\leq 2 \sum_{m=M+1}^{\infty}\abs{\gamma_{m}}m^{2}.
  \end{align*}
  We recall that the following bound on the Chebyshev coefficient holds true, see, e.g.,~\cite[Theorem 4.2]{trefethen2008},
  \begin{equation*}
  |\gamma_m|\leq \frac{2 V_k}{\pi m(m-1)\dots(m-k)},\quad m\geq k+1,\quad V_k:=\norm{g^{(k)}}_T=\int_{-1}^1\frac{g^{(k+1)}(z)}{\sqrt{1-z^2}}\d z,
  \end{equation*}
  where $k\geq 1$ denotes the $k$-th derivative of $g$.
  Using the above expressions and the fact that $m/(m-1)<(M+1)/M<2$ for $M<m$ and 
  \[
\sum_{j=M+1}^{\infty}\frac{1}{j(j-1)\dots(j-k)}=\frac{1}{kM(M-1)\dots(M-k+1)}=\frac{(M-k)!}{M!k},
  \]
we can estimate 
  \begin{align*}
 \abs{g'(z)-(I_{M}g)'(z)}&\leq  2 \sum_{m=M+1}^{\infty}\abs{\gamma_{m}}m^{2}\\
& \leq\frac{2 V_{k}}{\pi}\sum_{m=M+1}^{\infty}\frac{m^2}{m(m-1)\dots(m-k)}\\
& \leq\frac{4 V_{k}}{\pi}\sum_{m=M+1}^{\infty}\frac{1}{(m-2)(m-3)\dots(m-k)}\\
&\leq \frac{4 V_{k}}{\pi}\sum_{m=M-1}^{\infty}\frac{1}{m(m-1)\dots(m-k+2)}\\
&= \frac{4 V_{k}(M-k)!}{\pi(k-2)(M-2)!}\\
&\leq 4\max_{\xi\in [-1,1]}|g^{(M+1)}(\xi)|\frac{1}{(M-2)(M-2)!},
  \end{align*}
  where in the last inequality we set $k=M$ and used the fact that
  \[
\abs{V_{k}}\leq\int_{-1}^{1}\frac{\abs{g^{(k+1)}(t)}}{\sqrt{1-t^2}}\d t\leq \pi \max_{\xi\in [-1,1]}|g^{(k+1)}(\xi)|,
  \]
  which concludes the proof.
\end{proof}

Using the previous result we derive the error for the interpolation of the derivative of the bilinear form.

\begin{lemma}\label{lem:derivative_bilform_error}
  Let $u\in\HD{s_{2,k}}$, $v\in\HD{s_{1,k}}$, and $s\in\mathcal{S}_k$, where $s_{1,k}$,$s_{2,k}$ are such that the conditions of Lemma~\ref{lem:approximation} hold true. Then, for $\delta\in(0,\infty)$, $M>2$, we obtain
 \begin{equation}
 \abs{a^{\prime}_s(u,v;s,\delta)-\tilde{a}^\prime_{s}(u,v;s,\delta)}\lesssim
\tilde{\eta}_k\norm{u}_{\HD{s_{2,k}}}\norm{v}_{\HD{s_{1,k}}},\label{eq:derivative_bilform_error}
\end{equation}
where $\tilde{\eta}_k=\tilde{\sigma}_k^M {M^2}$\ with $\tilde{\sigma}_k=(s_{\max,k}-s_{\min,k})/\hat{\varepsilon}(s_{\max,k})$, and $\hat{\varepsilon}(s)$ is defined as in Lemma~\ref{lem:approximation}.
\end{lemma}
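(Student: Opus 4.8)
The plan is to reduce the error estimate for the $s$-derivative of the bilinear form to the abstract Chebyshev-derivative bound established in Lemma~\ref{lemma:chebyshev_derivative}. The key observation is that for fixed $u,v$ and $\delta$, the map $s\mapsto a(u,v;s,\delta)$ is a smooth scalar-valued function on the interval $\mathcal{S}_k$, and the affine approximation $\tilde a^\prime_s$ is nothing but the derivative of the Chebyshev interpolant of this scalar function. Thus I would first set $g(z):=a(u,v;s(z),\delta)$, where $s(z)$ is the affine change of variables mapping the reference interval $[-1,1]$ onto $\mathcal{S}_k=[s_{\min,k},s_{\max,k}]$. Differentiating $\tilde a(u,v;s,\delta)=\sum_m \Theta_{\mathcal{S}_k,m}(s)\,a(u,v;s_m,\delta)$ term by term in $s$ gives exactly $\tilde a^\prime_s$, so the left-hand side of~\eqref{eq:derivative_bilform_error} equals $|g'(z)-(I_Mg)'(z)|$ up to the chain-rule Jacobian factor $(2/(s_{\max,k}-s_{\min,k}))$ coming from $\d s/\d z$.

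Next I would apply Lemma~\ref{lemma:chebyshev_derivative} to $g$, which yields
\[
\abs{g'(z)-(I_Mg)'(z)}\leq \max_{\xi\in[-1,1]}\abs{g^{(M+1)}(\xi)}\,\frac{4}{(M-2)(M-2)!}.
\]
The crucial remaining ingredient is a bound on the higher derivatives $g^{(M+1)}$. By the chain rule, $g^{(M+1)}(z)$ equals $a_s^{(M+1)}(u,v;s,\delta)$ times $(\d s/\d z)^{M+1}=((s_{\max,k}-s_{\min,k})/2)^{M+1}$, so I would invoke the boundedness estimate~\eqref{eq:bdd_der_a_s} from Proposition~\ref{prop:s_reg}(ii) with $s_1=s_{1,k}$, $s_2=s_{2,k}$. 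This gives $\abs{a_s^{(M+1)}(u,v;s,\delta)}\leq C(M+1,\hat\varepsilon)\norm{u}_{\HD{s_{2,k}}}\norm{v}_{\HD{s_{1,k}}}$ with $\hat\varepsilon=\hat\varepsilon(s_{\max,k})=s_{1,k}+s_{2,k}-2s_{\max,k}>0$ (using that the bound is monotone over $s\in\mathcal{S}_k$ so the worst case is at $s_{\max,k}$, matching the definition of $\hat\varepsilon(s_{\max,k})$ in the statement).

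Combining the pieces and accounting for all the Jacobian factors, the constant collapses to powers of the contraction factor: the $((s_{\max,k}-s_{\min,k})/2)^{M+1}$ from the derivatives combines with the $(M+1/(e\hat\varepsilon))^{M+1}$-type growth in $C(M+1,\hat\varepsilon)$ to produce $\tilde\sigma_k^M$ with $\tilde\sigma_k=(s_{\max,k}-s_{\min,k})/\hat\varepsilon(s_{\max,k})$, while the factorial factor $4/((M-2)(M-2)!)$ together with the combinatorial growth leaves behind only a polynomial $M^2$ factor, giving $\tilde\eta_k=\tilde\sigma_k^M M^2$. The main obstacle I anticipate is precisely this bookkeeping: tracking how the Jacobian powers, the $k!$-type terms hidden in $C(M+1,\hat\varepsilon)$, and the $1/(M-2)!$ from Lemma~\ref{lemma:chebyshev_derivative} cancel cleanly to yield the advertised $\tilde\sigma_k^M M^2$ rather than something with residual factorial growth or an extra power of $M$. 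I would handle this using Stirling-type estimates on the ratio of factorials and the elementary inequality $(k/(e\hat\varepsilon))^k\leq$ a constant times $\hat\varepsilon^{-k}k!$, ensuring the $k!$ cancels against the $(M-2)!$ in the denominator.
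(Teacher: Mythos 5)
Your proposal is correct and follows essentially the same route as the paper: the affine change of variables onto $[-1,1]$, application of Lemma~\ref{lemma:chebyshev_derivative}, and a bound on $a_s^{(M+1)}$ of the form $2^{M+1}(M+1)!\,\hat\varepsilon(s_{\max,k})^{-(M+1)}$ (the paper cites \cite[Lemma~6.2]{BG2019rbm} for this, which is exactly your combination of \eqref{eq:bdd_der_a_s} with the Stirling-type inequality $(k/e)^k\leq k!$), after which the Jacobian powers and the ratio $(M+1)!/((M-2)(M-2)!)\sim M^2$ yield $\tilde\sigma_k^M M^2$ just as you describe.
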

\begin{proof}
We present a proof for $\delta>{\rm diam}|\D|$, the case $\delta\leq{\rm diam}|\D|$ follows from the same arguments. Using  the transformation $s={(s_{\min,k}+s_{\max,k})}/{2}+z(s_{\max,k}-s_{\min,k})/2$,  $z\in[-1,1]$, and applying Lemma~\ref{lemma:chebyshev_derivative}, we obtain
\begin{multline*}
  \abs{a^{\prime}_s(u,v;s,\delta)-\tilde{a}^\prime_{s}(u,v;s,\delta)}\leq \\
  \frac{4(s_{\max,k}-s_{\min,k})^M}{2^M(M-2)(M-2)!}\max_{\xi\in[s_{\min,k},s_{\max,k}]}\abs{a^{(M+1)}_s(u,v;\xi,\delta)}.
\end{multline*}
Using the same steps as in the proof of~\cite[Lemma~6.2]{BG2019rbm}, we obtain that
\[
\max_{\xi\in[s_{\min,k},s_{\max,k}]}\abs{a^{(M+1)}_s(u,v;\xi,\delta)}\lesssim \frac{2^{M+1}(M+1)!}{\hat{\varepsilon}(s_{\max,k})^{M+1}}\norm{u}_{\HD{s_{2,k}}}\norm{v}_{\HD{s_{1,k}}}.
\]
Then, combining the previous two estimates we obtain
\begin{multline*}
 \abs{a^{\prime}_s(u,v;s,\delta)-\tilde{a}^{\prime}_{s}(u,v;s,\delta)}\lesssim \\
 \left(\frac{s_{\max,k}-s_{\min,k}}{\hat{\varepsilon}(s_{\max,k})}\right)^M\cdot\frac{(M+1)!}{(M-2)(M-2)!\hat{\varepsilon}(s_{\max,k})}\norm{u}_{\HD{s_{2,k}}}\norm{v}_{\HD{s_{1,k}}}\\
 \lesssim\tilde{\sigma}_k^M {M^2}\norm{u}_{\HD{s_{2,k}}}\norm{v}_{\HD{s_{1,k}}},
\end{multline*}
which concludes the proof.
\end{proof}

\subsection{Operator splitting for \(\delta<\infty\) and \(\delta\)-derivative}

While the bilinear form \(a(\cdot,\cdot;s,\delta)\) exhibits a smooth behavior with respect to \(s\), as shown in Proposition~\ref{prop:s_reg}, it is less regular with respect to $\delta$, inherited by the limited spatial regularity of the solution; in fact, only a single derivative with respect to \(\delta\) is available (Proposition~\ref{prop:delta_reg}).
Therefore, \(\delta\)-interpolation of the operator is bound to lead to unsatisfactory results.
Instead, we split the operator into
\begin{align*}
  a(u,v;s,\delta)
  &= a(u,v;s,\infty) + c(u,v;s,\delta),
\end{align*}
with correction term
\begin{align}
  c(u,v;s,\delta)
  &=a(u,v;s,\delta) - a(u,v;s,\infty) \nonumber \\
  &= -\int_{\R^{n}}\int_{\R^{n}} \frac{(u(\x)-u(\xx))(v(\x)-v(\xx))}{\abs{\x-\xx}^{n+2s}} \mcX_{\abs{\x-\xx}>\delta}\d\xx\d\x \nonumber \\
  &= -2 \left(u,v\right)_{L^{2}(\Omega)} \int_{\R^{n}\setminus B_{\delta}(0)}\frac{1}{\abs{\xx}^{n+2s}}\d\xx  + 2\int_{\Omega}\int_{\Omega} \frac{u(\x) v(\xx)}{\abs{\x-\xx}^{n+2s}} \mcX_{\abs{\x-\xx}>\delta}\d\xx\d\x \nonumber \\
  &= -\frac{2\pi^{n/2}}{\Gamma(n/2)}\frac{\delta^{-2s}}{s} \left(u,v\right)_{L^{2}(\Omega)} + 2\int_{\Omega}\int_{\Omega} \frac{u(\x) v(\xx)}{\abs{\x-\xx}^{n+2s}} \mcX_{\abs{\x-\xx}>\delta}\d\xx\d\x. \label{eq:correction}
\end{align}
{We note that for \(\delta\geq \operatorname{diam}\Omega\) the last term of \eqref{eq:correction} is zero. As before, we can interpolate the bilinear form in \(s\) as follows}
\begin{align*}
  a(u,v;s,\delta) &\approx \tilde{a}(u,v;s,\infty) + c(u,v;s,\delta)  \\
  &= \sum_{m=0}^{M_k} \Theta_{\mathcal{S}_k,m}(s) a(u,v;s_{m},\infty) + c(u,v;s,\delta), \quad s\in\mathcal{S}_{k}.
\end{align*}
Changing the value of \(\delta\) only requires reassembly of the correction term \(c(\cdot,\cdot;s,\delta)\), which is significantly less expensive than assembly of \(a(\cdot,\cdot;s_{m},\delta)\), since integration over the singularity \(\x=\xx\) is avoided.

Moreover, we find that the \(\delta\)-derivative is given by
\begin{align*}
  a^{\prime}_{\delta}(u,v;s,\delta)
  &=c^{\prime}_{\delta}(u,v;s,\delta)\\
  &=\frac{4\pi^{n/2}}{\Gamma(n/2)} \delta^{-1-2s} \left(u,v\right)_{L^{2}(\Omega)} -2\delta^{-n-2s}\int_{\Omega} u(\x)\int_{\partial B_{\delta}(\x)}v(\xx) \d\xx \d\x \\
  &=\frac{4\pi^{n/2}}{\Gamma(n/2)} \delta^{-1-2s} \left(u,v-\overline{v}\right)_{L^{2}(\Omega)},
\end{align*}
where
\begin{align*}
  \overline{v}(\x)&=\frac{1}{\abs{\partial B_{\delta}(\x)}}\int_{\partial B_{\delta}(\x)}v(\xx) \d\xx.
\end{align*}
Note that, as described in the following section, we rely on finite element discretizations and that for finite element functions $\overline{v}$ can be evaluated exactly. We also observe that \(\overline{v}=0\) for \(\delta\geq \operatorname{diam}\Omega\).

\section{Discretization and error estimates}\label{sec:discretization-error-estimates}
In this section we first recall results of finite element approximations for the state and adjoint variables and then provide discretization error estimates for the state and adjoint solutions obtained via interpolated forms.
We also provide, for the interpolated problem, error estimates for the gradient of the cost functional.

\medskip
In what follows, we let $\D\subset\R^n$, $n=1,2$ be convex, and let \(\mathcal{T}_{h}\) be a family of shape-regular and locally quasi-uniform triangulations of \(\Omega\) \cite{ErnGuermond2004_TheoryPracticeFiniteElements}, and let \(\mathcal{N}_{h}\) be the set of vertices of \(\mathcal{T}_{h}\), \(h_{K}\) be the diameter of the element \(K\in\mathcal{T}_{h}\).
Let \(\phi_{i}\) be the usual piecewise linear Lagrange basis function associated with a node \(\vec{z}_{i}\in\mathcal{N}_{h}\), satisfying \(\phi_{i}\left(\vec{z}_{j}\right)=\delta_{ij}\) for \(\vec{z}_{j}\in\mathcal{N}_{h}\), and let \(X_{h}:=\operatorname{span}\left\{\phi_{i}\mid \vec{z}_{i}\in\mathcal{N}_{h}\right\}\).
The finite element subspace \(V_{h}\subset H_{\Omega}^{s}(\mathbb{R}^{n})\) is given by
\begin{align*}
  V_{h} = \left\{v_{h}\in X_{h}\mid v_{h}=0 \text{ on }\partial\Omega\right\} = \operatorname{span}\left\{\phi_{i}\mid \vec{z}_{i}\not\in \partial\Omega\right\}.
\end{align*}

\subsection{Error estimates for the state and adjoint equations}

It is well known that the following inverse inequality holds for {$0\leq \beta\leq \alpha$, see, e.g.,~\cite[Corollary 1.141]{ErnGuermond2004_TheoryPracticeFiniteElements},
\begin{align}
  \norm{v_{h}}_{\HD{\alpha}}\leq C_{I}h^{\beta-\alpha}\norm{v_{h}}_{\HD{\beta}} &&\forall v_{h}\in V_{h}.\label{eq:inverse_inequality}
\end{align}}
Let \(I_{h}\) be the Scott-Zhang interpolation operator \cite[Section 1.6.2]{ErnGuermond2004_TheoryPracticeFiniteElements} satisfying for \(\beta\in[0,2-s]\) the approximation property
\begin{align}
\norm{v-I_{h}v}_{\HD{s}} & \lesssim h^{\beta}\norm{v}_{\HD{s+\beta}} \quad \forall v\in \HD{s+\beta} \label{eq:interpolation-error}
\end{align}
and for \(\beta\in[0,1]\) the stability property
\begin{align}
\norm{I_{h}v}_{\HD{\beta}}&\lesssim \norm{v}_{\HD{\beta}} \quad \forall v\in \HD{\beta}. \label{eq:stability-interpolation}
\end{align}
Then, for $f\in H^{1/2-\varepsilon}(\D)$, the discrete version of~\eqref{eq:var_form} reads as follows: find $u_h\in V_h$ such that
\begin{equation}
a(u_h,v_h;\q)=(f,v_h)_{L^2(\D)},\quad\forall v_h\in V_h.\label{eq:state_discrete}
\end{equation}
From~\cite[Proposition~3.5]{BG2019vi}, we obtain the following a-priori error estimate
\begin{align}
&\norm{u-u_h}_{\HD{s}}\lesssim h^{1/2-\varepsilon}\norm{f}_{\HD{1/2-\varepsilon}},\label{eq:error_state_H}\\
 &\norm{u-u_h}_{L^2(\D)}\lesssim h^{1/2-\varepsilon+\beta}\norm{f}_{\HD{1/2-\varepsilon}}\label{eq:error_state_L2},
\end{align}
where $\beta=\min\{s,1/2-\varepsilon\}$ and the constant $C$ is independent of $h$. Similarly, we consider a discrete version of the adjoint equation~\eqref{eq:adjoint}: find $z_h\in V_h$ such that
\begin{equation}
a(\phi_h,z_h;\q)=(u_h-\ud,\phi_h)_{L^2(\D)},\quad\forall \phi_h\in V_h.\label{eq:adjoint_discrete}
\end{equation}
The following proposition establishes the a priori error estimates for the adjoint equation.
\begin{proposition}
For $\q\in\Q$, let $z(\q)\in\HD{s+1/2-\varepsilon}$ and $z_h(\q)\in V_h$ be the solutions of~\eqref{eq:adjoint} and~\eqref{eq:adjoint_discrete}, respectively. Then, the following holds
\begin{equation}
\norm{z-z_h}_{\HD{s}}\lesssim h^{1/2-\varepsilon}\left(\norm{f}_{H^{1/2-\varepsilon}(\D)}+\norm{\ud}_{H^{1/2-\varepsilon}(\D)}\right).
\end{equation}
\end{proposition}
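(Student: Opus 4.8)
The plan is to compare $z$ and $z_h$ through an auxiliary Galerkin solution that isolates the two distinct sources of error: the discretization of the adjoint bilinear form, and the perturbation of the right-hand side caused by replacing the exact state $u$ by its discrete counterpart $u_h$. Concretely, I would introduce $\tilde{z}_h\in V_h$ solving the Galerkin problem driven by the \emph{exact} data,
\[
a(\phi_h,\tilde{z}_h;\q)=(u-\ud,\phi_h)_{L^2(\D)},\quad\forall\phi_h\in V_h,
\]
and split, by the triangle inequality,
\[
\norm{z-z_h}_{\HD{s}}\le\norm{z-\tilde{z}_h}_{\HD{s}}+\norm{\tilde{z}_h-z_h}_{\HD{s}}.
\]

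For the first term I would observe that $\tilde{z}_h$ is precisely the finite element approximation of the continuous adjoint equation~\eqref{eq:adjoint}, which has exactly the same structure as the state equation with forcing $u-\ud\in H^{1/2-\varepsilon}(\D)$ (indeed $u=\S(\q)\in\HD{s+1/2-\varepsilon}$ and $\ud\in H^{1/2-\varepsilon}(\D)$). Hence the a priori estimate of~\cite{BG2019vi} that already produced~\eqref{eq:error_state_H} applies verbatim, yielding $\norm{z-\tilde{z}_h}_{\HD{s}}\lesssim h^{1/2-\varepsilon}\norm{u-\ud}_{H^{1/2-\varepsilon}(\D)}$. The improved adjoint regularity~\eqref{eq:adjoint_regularity} then bounds $\norm{u-\ud}_{H^{1/2-\varepsilon}(\D)}\lesssim\norm{f}_{H^{1/2-\varepsilon}(\D)}+\norm{\ud}_{H^{1/2-\varepsilon}(\D)}$, which gives the asserted rate for this contribution.

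For the second term, both $\tilde{z}_h$ and $z_h$ lie in $V_h$ and differ only in the data; subtracting~\eqref{eq:adjoint_discrete} from the definition of $\tilde{z}_h$ gives $a(\phi_h,\tilde{z}_h-z_h;\q)=(u-u_h,\phi_h)_{L^2(\D)}$ for all $\phi_h\in V_h$. Testing with $\phi_h=\tilde{z}_h-z_h$, invoking the coercivity of $a(\cdot,\cdot;\q)$ (equivalence of the $\Vd$- and $\HD{s}$-norms), Cauchy--Schwarz, and the Poincar\'e inequality~\eqref{eq:poincare}, I would absorb one factor of $\norm{\tilde{z}_h-z_h}_{\HD{s}}$ to obtain $\norm{\tilde{z}_h-z_h}_{\HD{s}}\lesssim\norm{u-u_h}_{L^2(\D)}$. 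The sharper $L^2$ state estimate~\eqref{eq:error_state_L2} then controls this by $h^{1/2-\varepsilon+\beta}\norm{f}_{\HD{1/2-\varepsilon}}$ with $\beta=\min\{s,1/2-\varepsilon\}\ge0$, which for $h\le1$ is no larger than $h^{1/2-\varepsilon}\norm{f}_{\HD{1/2-\varepsilon}}$, and is in fact of strictly higher order.

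Combining the two bounds via the triangle inequality yields the claimed estimate. I expect the only genuinely non-routine point to be recognizing that the discrete adjoint~\eqref{eq:adjoint_discrete} is driven by $u_h$ rather than by the exact state $u$, so that it is \emph{not} the plain Galerkin approximation of~\eqref{eq:adjoint}: the auxiliary solution $\tilde{z}_h$ is exactly what lets one treat the discretization error and the data-perturbation error separately, and the crucial observation is that the data-perturbation contribution is governed by the $L^2$ state error rather than the energy-norm error, so it does not degrade the overall convergence rate.
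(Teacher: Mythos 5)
Your proposal is correct and follows essentially the same route as the paper: the paper likewise introduces an auxiliary Galerkin solution $\hat{z}_h\in V_h$ driven by the exact data $u-\ud$, bounds $\norm{z-\hat{z}_h}_{\HD{s}}$ by the a priori estimate of~\cite{BG2019vi} together with the adjoint regularity~\eqref{eq:adjoint_regularity}, and controls $\norm{\hat{z}_h-z_h}_{\HD{s}}$ by the state error in a weaker norm via~\eqref{eq:error_state_L2}. The only cosmetic difference is that the paper measures the data perturbation in $H^{-s}(\D)$ rather than $L^2(\D)$ before invoking the same $L^2$ state estimate, which does not change the final rate.
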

\begin{proof}
Let $\ud\in\HD{1/2-\varepsilon}$ and let $u\in\HD{s}$ be the solution of~\eqref{eq:var_form}; we denote by $\hat{z}_h\in\HD{s}$ the solution of  
\[
a(\phi_h,\hat{z}_h;\q)=(u-\ud,\phi_h)_{L^2(\D)},\quad\forall\phi_h\in V_h.
\]
By the standard Lax-Milgram argument the above problem is well-posed. Furthermore, since $u-\ud\in\HD{1/2-\varepsilon}$, by invoking the regularity result~\eqref{eq:adjoint_regularity}, it follows from~\cite[Proposition~3.5]{BG2019vi} that 
\[
\norm{z-\hat{z}_h}_{\HD{s}}\lesssim h^{1/2-\varepsilon}\norm{u-\ud}_{H^{1/2-\varepsilon}(\D)}.\]
By applying the above estimate and \eqref{eq:error_state_L2} we obtain
\begin{align*}
\norm{z-z_h}_{\HD{s}}\leq&\ \norm{z-\hat{z}_h}_{\HD{s}}+\norm{\hat{z}_h-z_h}_{\HD{s}}\\
\lesssim& h^{1/2-\varepsilon}\norm{u-\ud}_{H^{1/2-\varepsilon}(\D)}+\norm{u-\ud}_{H^{-s}(\D)}\\
\lesssim&\  h^{1/2-\varepsilon}\norm{u-\ud}_{H^{1/2-\varepsilon}(\D)}+h^{1/2-\varepsilon+\beta}\norm{f}_{H^{1/2-\varepsilon}(\D)}\\
\lesssim&\ h^{1/2-\varepsilon}\left(\norm{f}_{H^{1/2-\varepsilon}(\D)}+\norm{\ud}_{H^{1/2-\varepsilon
}(\D)}\right).
\end{align*}
This concludes the proof.
\end{proof}

\subsection{Error estimates for the interpolated state and adjoint equations}
{Note that, in practice, instead of solving~\eqref{eq:state_discrete} and~\eqref{eq:adjoint_discrete}, which involve the exact bilinear form, we consider the} approximated problem where the bilinear form is replaced by its affine interpolant. That is, we solve the discretized problems of finding, respectively, $\tilde{u}_{h}(\q)\in V_{h}$, such that
\begin{equation}
  \tilde{a}(\tilde{u}_{h}(\q),v_{h};\q)=(f,v_{h})_{L^2(\D)},\quad\forall v_{h}\in V_{h},\label{eq:discrete_var_form}
\end{equation}
and $\tilde{z}_h$ such that 
\begin{equation}
  \tilde{a}(\phi_h,\tilde{z}_{h};\q)=(\tilde{u}_h-\ud,\phi_{h})_{L^2(\D)},\quad\forall \phi_{h}\in V_{h}.\label{eq:adjoint_discrete_affine}
\end{equation} 
The next theorem provides the error estimate for the state solution of the interpolated problem.
\begin{theorem}\label{thm:solution_error_affine}
  Let \(\q=(s,\delta)\in[s_{\min},s_{\max}]\times(0,\infty)\), and let \(u(\q)\) be the solution of \eqref{eq:var_form}.
  We consider an operator interpolation as given by Lemma~\ref{lem:global-approximation} for the tolerance
  \begin{align}
    \eta &\leq \frac{\min_{s\in[s_{\min},s_{\max}]}\alpha(s,\delta)}{2C_{I}}h^{1/2-\varepsilon}, \label{eq:error-bound-interpolation}
  \end{align}
  where \(\alpha(s,\delta)\) is the coercivity constant of \(a(\cdot,\cdot;\q)\).
  Let also $\tilde{u}_h$ be the solution of the interpolated problem~\eqref{eq:discrete_var_form}, then
  \begin{align}
    \norm{u-\tilde{u}_{h}}_{\HD{s}} &\lesssim \left(h^{1/2-\varepsilon}+\eta\right)\norm{f}_{H^{1/2-\varepsilon}(\D)}\lesssim h^{1/2-\varepsilon}\norm{f}_{H^{1/2-\varepsilon}(\D)}. \label{eq:solution_error_affine}
  \end{align}
\end{theorem}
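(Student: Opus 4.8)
The strategy is a standard Strang-type argument: split the total error $u - \tilde{u}_h$ through the intermediate exact-form discrete solution $u_h$ (the solution of~\eqref{eq:state_discrete}), writing
\begin{equation*}
\norm{u-\tilde{u}_{h}}_{\HD{s}} \leq \norm{u-u_h}_{\HD{s}} + \norm{u_h-\tilde{u}_{h}}_{\HD{s}}.
\end{equation*}
The first term is immediately controlled by the a-priori estimate~\eqref{eq:error_state_H}, giving the $h^{1/2-\varepsilon}\norm{f}_{H^{1/2-\varepsilon}(\D)}$ contribution. The whole task therefore reduces to bounding the second, purely discrete term, which measures the effect of replacing the exact bilinear form $a$ by its interpolant $\tilde a$ on the finite-dimensional space $V_h$.

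\textbf{Bounding the consistency term.} For $w_h := u_h - \tilde u_h \in V_h$ I would use coercivity of $a(\cdot,\cdot;\q)$ with constant $\alpha(s,\delta)$, then subtract the two discrete equations~\eqref{eq:state_discrete} and~\eqref{eq:discrete_var_form} (both having right-hand side $(f,v_h)_{L^2(\D)}$) to convert the difference of solutions into a difference of bilinear forms acting on $\tilde u_h$:
\begin{equation*}
\alpha(s,\delta)\norm{w_h}_{\HD{s}}^2 \leq a(w_h,w_h;\q) = a(u_h,w_h;\q) - a(\tilde u_h,w_h;\q) = a(\tilde u_h,w_h;\q) - \tilde a(\tilde u_h,w_h;\q),
\end{equation*}
where the last equality uses $a(u_h,w_h;\q)=(f,w_h)_{L^2(\D)}=\tilde a(\tilde u_h,w_h;\q)$. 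Now I apply the interpolation error estimate of Lemma~\ref{lem:global-approximation} to the right-hand side with the arguments $\tilde u_h$ and $w_h$, yielding a bound of the form $\eta\,\norm{\tilde u_h}_{H^{\overline{s}_2(s)}_{\Omega}(\mathbb{R}^n)}\norm{w_h}_{\HD{s}}$. After cancelling one factor of $\norm{w_h}_{\HD{s}}$ this gives $\norm{w_h}_{\HD{s}} \lesssim (\eta/\alpha)\,\norm{\tilde u_h}_{H^{\overline{s}_2(s)}_{\Omega}(\mathbb{R}^n)}$.

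\textbf{The main obstacle.} The delicate point is that Lemma~\ref{lem:global-approximation} requires the \emph{higher} Sobolev norm $\norm{\tilde u_h}_{H^{\overline{s}_2(s)}_{\Omega}}$ with $\overline{s}_2(s) > s$, but $\tilde u_h$ is merely a finite element function whose natural regularity is only $H^s_\Omega$. This is exactly where the inverse inequality~\eqref{eq:inverse_inequality} enters: I would estimate $\norm{\tilde u_h}_{H^{\overline{s}_2(s)}_{\Omega}} \leq C_I h^{s-\overline{s}_2(s)}\norm{\tilde u_h}_{\HD{s}}$, so that the consistency bound becomes $\norm{w_h}_{\HD{s}} \lesssim (C_I\,\eta/\alpha)\,h^{s-\overline{s}_2(s)}\norm{\tilde u_h}_{\HD{s}}$. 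Since $\overline{s}_2(s)-s \leq 1/2-\varepsilon$, the negative power of $h$ is at worst $h^{-(1/2-\varepsilon)}$, and this is precisely why the tolerance is chosen as in~\eqref{eq:error-bound-interpolation}, namely $\eta \leq (\min_s \alpha)/(2C_I)\,h^{1/2-\varepsilon}$: this choice makes $C_I\,\eta\,h^{s-\overline{s}_2(s)}/\alpha \leq 1/2$, so the term $\norm{\tilde u_h}_{\HD{s}}$ can be absorbed. Finally I would bound the resulting $\norm{\tilde u_h}_{\HD{s}}$ by $\norm{f}_{H^{1/2-\varepsilon}(\D)}$ via discrete stability (coercivity plus the Poincaré inequality~\eqref{eq:poincare} applied to~\eqref{eq:discrete_var_form}), which closes the estimate and, combined with~\eqref{eq:error_state_H}, yields the claimed bound; the second inequality in~\eqref{eq:solution_error_affine} is then immediate from $\eta \lesssim h^{1/2-\varepsilon}$.
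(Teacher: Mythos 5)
Your overall architecture (Strang-type splitting through the exact-form discrete solution $u_h$, inverse inequality to handle the higher-order norm, the tolerance \eqref{eq:error-bound-interpolation} to control the resulting negative power of $h$) is in the right family, but the argument as written does not deliver the claimed rate. The problem is in your ``main obstacle'' step. After cancelling one factor of $\norm{w_h}_{\HD{s}}$ and applying the inverse inequality to $\norm{\tilde u_h}_{H^{\overline{s}_2(s)}_\Omega}$, you arrive at
\begin{equation*}
\norm{w_h}_{\HD{s}} \lesssim \frac{C_I\,\eta}{\alpha}\, h^{s-\overline{s}_2(s)}\norm{\tilde u_h}_{\HD{s}} \leq \tfrac12 \norm{\tilde u_h}_{\HD{s}} \lesssim \norm{f}_{H^{1/2-\varepsilon}(\D)}.
\end{equation*}
There is nothing to ``absorb'' here: the quantity on the right is $\norm{\tilde u_h}_{\HD{s}}$, not $\norm{w_h}_{\HD{s}}$, and it is only $O(1)$. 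The inverse inequality costs exactly the factor $h^{-(1/2-\varepsilon)}$ that the smallness of $\eta$ buys you, so the consistency term ends up bounded by a constant rather than by $\eta\norm{f}$, and you obtain no convergence from that piece at all.

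The missing idea is that the higher-order norm $\norm{\cdot}_{H^{\overline{s}_2(s)}_\Omega}$ demanded by Lemma~\ref{lem:global-approximation} must be placed on an object with \emph{uniform} higher regularity, not on $\tilde u_h$. The paper does this by invoking Strang's first lemma and choosing the comparison function $v_h = I_h u$, the Scott--Zhang interpolant of the exact solution: since $u\in \HD{s+1/2-\varepsilon}\hookrightarrow\HD{\overline{s}_2(s)}$ and $I_h$ is stable in $\HD{\overline{s}_2(s)}$ by \eqref{eq:stability-interpolation}, one gets $\eta\norm{I_h u}_{\HD{\overline{s}_2(s)}}\lesssim \eta\norm{f}_{H^{1/2-\varepsilon}(\D)}$ with no inverse inequality and hence no loss. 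In the paper the inverse inequality and the tolerance condition \eqref{eq:error-bound-interpolation} are used only once, and for a different purpose: to prove that $\tilde a(\cdot,\cdot;\q)$ is coercive on $V_h$, which is what makes the interpolated problem well-posed and Strang's lemma applicable (a point your proof also leaves unaddressed, since you assume $\tilde u_h$ exists and only ever use coercivity of $a$). Your route could be repaired within its own logic --- e.g.\ split $\norm{\tilde u_h}_{\HD{\overline{s}_2(s)}}\le \norm{u_h}_{\HD{\overline{s}_2(s)}}+\norm{w_h}_{\HD{\overline{s}_2(s)}}$, apply the inverse inequality only to the $w_h$ part so that it genuinely absorbs into the left-hand side, and establish a uniform bound $\norm{u_h}_{\HD{\overline{s}_2(s)}}\lesssim\norm{f}_{H^{1/2-\varepsilon}(\D)}$ from \eqref{eq:error_state_H} and the inverse inequality applied to $u_h-I_hu$ --- but as written the proof does not establish \eqref{eq:solution_error_affine}.
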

\begin{proof}
  For \(\tilde{u}_{h}\in V_{h}\)
  \begin{align*}
    \tilde{a}(\tilde{u}_{h},\tilde{u}_{h};\q)
    & \geq a(\tilde{u}_{h},\tilde{u}_{h};\q) - \abs{a(\tilde{u}_{h},\tilde{u}_{h};\q)-\tilde{a}(\tilde{u}_{h},\tilde{u}_{h};\q)} \\
    &\geq {\alpha(s,\delta)} \norm{\tilde{u}_{h}}_{\HD{s}}^{2} - \eta \norm{\tilde{u}_{h}}_{\HD{\overline{s}_{2}(s)}}\norm{\tilde{u}_{h}}_{\HD{s}} \\
    &\geq \left({\alpha(s,\delta)} - \frac{C_{I}\eta}{h^{\overline{s}_{2}(s)-s}}\right)\norm{\tilde{u}_{h}}_{\HD{s}}^{2} \\
    &= \left({\alpha(s,\delta)} - \frac{\min\alpha(s,\delta)}{2} h^{s+1/2-\varepsilon-\overline{s}_{2}(s)}\right)\norm{\tilde{u}_{h}}_{\HD{s}}^{2} \\
    &\geq\left({ \alpha(s,\delta)}/2 \right)\norm{\tilde{u}_{h}}_{\HD{s}}^{2},
  \end{align*}
  where we have used the coercivity of \(a(\cdot,\cdot;\q)\), Lemma~\ref{lem:global-approximation}, the inverse inequality~\eqref{eq:inverse_inequality}, and \(s+1/2-\varepsilon-\overline{s}_{2}(s)\geq 0\), where $\overline{s}_2(s)$ is defined as in Lemma~\ref{lem:global-approximation}. This shows that \(\tilde{a}(\cdot,\cdot;\q)\) is coercive on the discrete space \(V_{h}\).

  By Strang's first lemma \cite{ErnGuermond2004_TheoryPracticeFiniteElements}, we have that
  \begin{align*}
    \norm{u-\tilde{u}_{h}}_{\HD{s}}
    &\lesssim \inf_{v_{h}} \left\{\norm{u-v_{h}}_{\HD{s}} + \sup_{w_{h}}\frac{\abs{a(v_{h},w_{h};\q)-\tilde{a}_{\xi}(v_{h},w_{h};\q)}}{\norm{w_{h}}_{\HD{s}}} \right\} \\
    &\lesssim \inf_{v_{h}} \left\{\norm{u-v_{h}}_{\HD{s}} + \eta \norm{v_{h}}_{\HD{\overline{s}_{2}(s)}}  \right\} \\
    &\lesssim \left\{\norm{u-I_{h}u}_{\HD{s}} + \eta \norm{I_{h}u}_{\HD{\overline{s}_{2}(s)}}  \right\}.
  \end{align*}
  Using \eqref{eq:interpolation-error} and \eqref{eq:stability-interpolation} we obtain
  \begin{align*}
    \norm{u-\tilde{u}_{h}}_{\HD{s}} &\lesssim h^{1/2-\varepsilon}\norm{u}_{\HD{s+1/2-\varepsilon}} + \eta \norm{u}_{\HD{\overline{s}_{2}}} \nonumber\\
    &\lesssim(h^{1/2-\varepsilon}+\eta)\norm{u}_{\HD{s+1/2-\varepsilon}}.
  \end{align*}
  Finally, using \eqref{eq:error-bound-interpolation}, Assumption~\ref{ass:f-regularity} and Theorem~\ref{thm:regularity_linear}  we obtain the desired result.
\end{proof}

Next, we derive the error estimates for the interpolated adjoint problem.
\begin{theorem}\label{thm:adjoint_error_affine}
For $\q\in\Q$, let $z(\q)\in\HD{s+1/2-\varepsilon}$ and $z_h(\q)\in V_h$ be the solutions of~\eqref{eq:adjoint} and~\eqref{eq:adjoint_discrete}, respectively, and the tolerance $\eta$ for the operator interpolation is chosen as in Theorem~\ref{thm:solution_error_affine}. Then, the following holds
  \begin{align}
    \norm{z-\tilde{z}_{h}}_{H_{\Omega}^{s}} &\lesssim \left(h^{1/2-\varepsilon}+\eta\right)\left(\norm{f}_{H^{1/2-\varepsilon}(\D)}+\norm{\ud}_{H^{1/2-\varepsilon}(\D)}\right).\label{eq:error_adjoint_affine}
  \end{align}
\end{theorem}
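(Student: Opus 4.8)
The plan is to mirror the proof of Theorem~\ref{thm:solution_error_affine}, but to keep track of the extra error introduced by using the approximate state $\tilde{u}_h$ in place of $u$ on the right-hand side of the interpolated adjoint problem~\eqref{eq:adjoint_discrete_affine}. To isolate the two error mechanisms, I would introduce an auxiliary discrete adjoint $\hat{z}_h\in V_h$ solving the \emph{interpolated} adjoint problem driven by the \emph{exact} data,
\begin{equation*}
\tilde{a}(\phi_h,\hat{z}_h;\q)=(u-\ud,\phi_h)_{L^2(\D)},\quad\forall\phi_h\in V_h.
\end{equation*}
This problem is well posed because $\tilde{a}(\cdot,\cdot;\q)$ is symmetric (being a linear combination, with the Lagrange weights $\Theta_{\mathcal{S}_k,m}$, of the symmetric forms $a(\cdot,\cdot;s_m,\delta)$) and coercive on $V_h$, the latter fact already established in the proof of Theorem~\ref{thm:solution_error_affine} under the tolerance choice~\eqref{eq:error-bound-interpolation}. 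The triangle inequality then gives $\norm{z-\tilde{z}_h}_{\HD{s}}\leq\norm{z-\hat{z}_h}_{\HD{s}}+\norm{\hat{z}_h-\tilde{z}_h}_{\HD{s}}$, and I would bound the two terms separately.

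For the first term, $z$ and $\hat{z}_h$ solve the same problem with the \emph{same} right-hand side $(u-\ud,\cdot)_{L^2(\D)}$, only the bilinear form being perturbed; this is exactly the setting of Theorem~\ref{thm:solution_error_affine} with $z$ in the role of $u$ and $u-\ud$ in the role of $f$. Applying Strang's first lemma, choosing $v_h=I_h z$, and estimating the consistency term through the interpolation bound of Lemma~\ref{lem:global-approximation} together with the Scott--Zhang estimates~\eqref{eq:interpolation-error} and~\eqref{eq:stability-interpolation} (using $\overline{s}_2(s)\leq s+1/2-\varepsilon$), I obtain
\begin{equation*}
\norm{z-\hat{z}_h}_{\HD{s}}\lesssim\left(h^{1/2-\varepsilon}+\eta\right)\norm{z}_{\HD{s+1/2-\varepsilon}}.
\end{equation*}
The adjoint regularity estimate~\eqref{eq:adjoint_regularity} then converts $\norm{z}_{\HD{s+1/2-\varepsilon}}$ into $\norm{f}_{H^{1/2-\varepsilon}(\D)}+\norm{\ud}_{H^{1/2-\varepsilon}(\D)}$, which produces the required form of the bound.

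For the second term, subtracting the defining equations of $\hat{z}_h$ and $\tilde{z}_h$ yields $\tilde{a}(\phi_h,\hat{z}_h-\tilde{z}_h;\q)=(u-\tilde{u}_h,\phi_h)_{L^2(\D)}$ for all $\phi_h\in V_h$, where now the bilinear form is identical and only the data differ by $u-\tilde{u}_h$. Testing with $\phi_h=\hat{z}_h-\tilde{z}_h$, invoking the coercivity of $\tilde{a}$ on $V_h$, and applying Cauchy--Schwarz followed by the Poincaré inequality~\eqref{eq:poincare}, I would get
\begin{equation*}
\norm{\hat{z}_h-\tilde{z}_h}_{\HD{s}}\lesssim\norm{u-\tilde{u}_h}_{L^2(\D)}\lesssim\norm{u-\tilde{u}_h}_{\HD{s}},
\end{equation*}
and the state estimate~\eqref{eq:solution_error_affine} of Theorem~\ref{thm:solution_error_affine} bounds the last quantity by $(h^{1/2-\varepsilon}+\eta)\norm{f}_{H^{1/2-\varepsilon}(\D)}$. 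Combining the two contributions and recalling that $\eta\lesssim h^{1/2-\varepsilon}$ by~\eqref{eq:error-bound-interpolation} yields the claimed estimate~\eqref{eq:error_adjoint_affine}. The only genuinely new ingredient relative to the state case is the data-perturbation term $\hat{z}_h-\tilde{z}_h$, so the main point to get right is the choice of the auxiliary problem $\hat{z}_h$, which cleanly separates the bilinear-form perturbation from the data perturbation; the remaining steps reuse the coercivity of $\tilde{a}$ on $V_h$, the Strang-lemma argument of Theorem~\ref{thm:solution_error_affine}, and the adjoint regularity~\eqref{eq:adjoint_regularity}. A minor technical check is that the symmetry of $\tilde{a}$ permits placing the unknown adjoint in either argument when invoking coercivity and Strang's lemma.
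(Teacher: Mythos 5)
Your proposal is correct and takes essentially the same route as the paper: the paper's own proof is only a one-line appeal to the argument of Theorem~\ref{thm:solution_error_affine} via Strang's first lemma together with \eqref{eq:interpolation-error}, \eqref{eq:inverse_inequality} and the state estimate \eqref{eq:solution_error_affine}, and your splitting through the auxiliary adjoint $\hat{z}_h$ with exact data $(u-\ud,\cdot)_{L^2(\D)}$ is exactly the decomposition the paper already uses for the non-interpolated adjoint estimate. Your filled-in details (coercivity of $\tilde{a}$ on $V_h$ under \eqref{eq:error-bound-interpolation}, symmetry to place the unknown in either slot, and the Poincar\'e bound for the data-perturbation term) are all sound.
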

\begin{proof}
The proof follows similar arguments as in Theorem~\ref{thm:solution_error_affine} by using  Strang's first lemma and invoking~\eqref{eq:inverse_inequality},~\eqref{eq:interpolation-error}, and~\eqref{eq:solution_error_affine}.
\end{proof}

\subsection{Error estimates for the gradient of the cost functional}
First, we prove auxilliary convergence results, which will be needed later to derive the error estimate for the derivative of the reduced cost functional. 
\begin{lemma}\label{lemma:aux_error_estimate}
Let $u,\tilde{u}_h$ and $z,\tilde{z}_h$ be the continuous and discrete solutions of~\eqref{eq:var_form},~\eqref{eq:discrete_var_form} and~\eqref{eq:adjoint},~\eqref{eq:adjoint_discrete_affine}, respectively, and let the conditions of Theorem~\ref{thm:solution_error_affine} and Theorem~\ref{thm:adjoint_error_affine} hold true. Then, for $\mu\in [0,s+1/2)$, $\varepsilon>0$, and $\alpha:=\min\{s-\mu,1/2-\varepsilon\}$, $s\in (0,1)$ the following holds true
\begin{align}
\norm{u-\tilde{u}_h}_{\HD{\mu}}\lesssim
\left(h^{\min\{s+1/2-\mu,1-\varepsilon\}-\varepsilon}+\eta\right)\norm{f}_{H^{1/2-\varepsilon}(\D)}\label{eq:state_error_1}
\end{align}
and
\begin{align}
\norm{z-\tilde{z}_h}_{\HD{\mu}}\lesssim
\left(h^{\min\{s+1/2-\mu,1-\varepsilon\}-\varepsilon}+\eta\right)C_d
\label{eq:adj_error_1}
\end{align}
where $C_d:=\norm{f}_{H^{1/2-\varepsilon}(\D)}+\norm{\ud}_{H^{1/2-\varepsilon}(\D)}$, and $\HD{0}\equiv L^2(\D)$.  Furhermore, we have the following norm bound 
\begin{equation}
\norm{\tilde{u}_h}_{\HD{\mu}}\lesssim\norm{f}_{H^{1/2-\varepsilon}(\D)},
\label{eq:state_norm_1}
\end{equation}
for \(\mu\in[0,s+1/2)\), with the hidden constant independent of $h$.
\end{lemma}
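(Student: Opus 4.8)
The plan is to prove the three assertions in a specific order dictated by a potential circularity: the duality argument that yields~\eqref{eq:state_error_1} for $\mu<s$ will require the norm bound~\eqref{eq:state_norm_1} evaluated at an index \emph{above} $s$, so that bound must be established first, and by a route that does not itself invoke duality. Accordingly I would (i) prove~\eqref{eq:state_error_1} for $\mu\in[s,s+1/2)$ by a split-plus-inverse-inequality argument, (ii) deduce~\eqref{eq:state_norm_1} from it, (iii) prove~\eqref{eq:state_error_1} for $\mu\in[0,s)$ by Aubin--Nitsche duality, and (iv) reduce the adjoint estimate~\eqref{eq:adj_error_1} to the state analysis plus a right-hand-side perturbation.

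For step (i), with $\mu\in[s,s+1/2)$, I would split $u-\tilde{u}_h=(u-I_hu)+(I_hu-\tilde{u}_h)$. The interpolation term is controlled by the general Scott--Zhang property $\norm{v-I_hv}_{\HD{\mu}}\lesssim h^{t-\mu}\norm{v}_{\HD{t}}$ (the natural extension of~\eqref{eq:interpolation-error}) with $t=s+1/2-\varepsilon$, while the finite-element remainder $I_hu-\tilde{u}_h\in V_h$ is estimated by the inverse inequality~\eqref{eq:inverse_inequality} with $\beta=s$, $\alpha=\mu$ and then by Theorem~\ref{thm:solution_error_affine} together with~\eqref{eq:interpolation-error}. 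Since $\mu\ge s$ forces $s+1/2-\mu\le 1/2<1-\varepsilon$, the cap in the minimum is inactive and the exponent is exactly $s+1/2-\varepsilon-\mu$; the stray factor $h^{s-\mu}\eta$ is absorbed into the $h$-rate using the standing tolerance $\eta\lesssim h^{1/2-\varepsilon}$ from~\eqref{eq:error-bound-interpolation}. This step uses neither~\eqref{eq:state_norm_1} nor duality. Step (ii) then follows: for $\mu\le s$, \eqref{eq:state_norm_1} is immediate from discrete coercivity of $\tilde{a}(\cdot,\cdot;\q)$ and the embedding $\HD{s}\hookrightarrow\HD{\mu}$; for $\mu\in(s,s+1/2)$ one writes $\norm{\tilde{u}_h}_{\HD{\mu}}\le\norm{u}_{\HD{\mu}}+\norm{u-\tilde{u}_h}_{\HD{\mu}}$ and, choosing $\varepsilon<s+1/2-\mu$ so that $u\in\HD{s+1/2-\varepsilon}\hookrightarrow\HD{\mu}$ by Theorem~\ref{thm:regularity_linear}, applies the case just proven.

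For step (iii), with $\mu\in[0,s)$, I would run an Aubin--Nitsche argument. Writing $\norm{u-\tilde{u}_h}_{\HD{\mu}}=\sup\{(u-\tilde{u}_h,g)_{L^2(\D)}:\ \norm{g}_{H^{-\mu}(\D)}=1\}$ and letting $w\in\HD{s+\alpha}$ solve the exact dual problem $a(w,v;\q)=(g,v)_{L^2(\D)}$ — where $\alpha=\min\{s-\mu,1/2-\varepsilon\}$ is precisely the regularity index furnished by Theorem~\ref{thm:regularity_linear} for data $g\in H^{-\mu}(\D)$, since $-\mu\ge-s$ — the standard Galerkin manipulation with $w_h=I_hw$ gives the identity $(u-\tilde{u}_h,g)_{L^2(\D)}=a(w-I_hw,u-\tilde{u}_h;\q)-(a-\tilde{a})(I_hw,\tilde{u}_h;\q)$. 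The first term is handled by continuity of $a$, the estimate $\norm{w-I_hw}_{\HD{s}}\lesssim h^{\alpha}\norm{w}_{\HD{s+\alpha}}$, and Theorem~\ref{thm:solution_error_affine}, producing $h^{\alpha}(h^{1/2-\varepsilon}+\eta)$. The consistency term is the crux: using symmetry and Lemma~\ref{lem:global-approximation} with $\tilde{u}_h$ placed in the high-regularity slot, $\abs{(a-\tilde{a})(I_hw,\tilde{u}_h;\q)}\lesssim\eta\,\norm{\tilde{u}_h}_{\HD{\overline{s}_2(s)}}\norm{I_hw}_{\HD{s}}$, and here~\eqref{eq:state_norm_1} at the index $\overline{s}_2(s)<s+1/2$ together with the stability~\eqref{eq:stability-interpolation} yields $\lesssim\eta\norm{f}_{H^{1/2-\varepsilon}(\D)}$. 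Collecting terms and using $h^{\alpha}\eta\le\eta$ gives the exponent $\alpha+1/2-\varepsilon=\min\{s+1/2-\mu,1-\varepsilon\}-\varepsilon$, as claimed.

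Finally, for~\eqref{eq:adj_error_1} I would introduce the auxiliary discrete adjoint $\hat{z}_h\in V_h$ solving $\tilde{a}(\phi_h,\hat{z}_h;\q)=(u-\ud,\phi_h)_{L^2(\D)}$ and split $z-\tilde{z}_h=(z-\hat{z}_h)+(\hat{z}_h-\tilde{z}_h)$. The first difference has exactly the structure of the state error with data $u-\ud\in H^{1/2-\varepsilon}(\D)$ (regularity~\eqref{eq:adjoint_regularity}), so steps (i)--(iii) apply verbatim and produce the $C_d$-weighted bound; the second difference lies in $V_h$, solves a discrete problem with right-hand side $(u-\tilde{u}_h,\cdot)_{L^2(\D)}$, and is controlled by discrete coercivity, the inverse inequality when $\mu>s$, and the state estimate~\eqref{eq:state_error_1} at $\mu=0$. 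I expect the main obstacle to be precisely the consistency term in the duality argument of step (iii): it forces uniform-in-$h$ control of $\tilde{u}_h$ in a norm strictly above the energy order $s$, which is what makes~\eqref{eq:state_norm_1} indispensable, and the non-circular route to that bound (through the $\mu\ge s$ split rather than through duality) is what pins down the ordering of the whole proof. A secondary, purely bookkeeping difficulty is verifying that the various exponents collapse to the single expression $\min\{s+1/2-\mu,1-\varepsilon\}-\varepsilon$ and that the cap $1-\varepsilon$ becomes active only for $\mu$ below $s-1/2+\varepsilon$.
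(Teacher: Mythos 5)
Your proposal is correct and follows essentially the same route as the paper: the split-plus-inverse-inequality argument for $\mu\in[s,s+1/2)$, the norm bound deduced from it, and an Aubin--Nitsche duality argument for $\mu\in[0,s)$ whose consistency term is controlled exactly via Lemma~\ref{lem:global-approximation} and the bound~\eqref{eq:state_norm_1} at the index $\overline{s}_2(s)$. The only differences are cosmetic (the paper phrases the dual problem with right-hand side $(u-\tilde{u}_h,\cdot)_{\HD{\mu}}$ rather than your $L^2$-pairing with $g\in H^{-\mu}(\D)$, and it omits the adjoint case as ``similar,'' which your auxiliary-discrete-adjoint splitting implements correctly).
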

\begin{proof}
We present a proof for the state solutions only, since for the adjoint solutions it follows similar arguments and we omit it for compactness purposes.
First, we consider the case $\mu\in[s,s+1/2)$. Using~\eqref{eq:inverse_inequality},~\eqref{eq:interpolation-error},~\eqref{nonl_regularity},~\eqref{eq:solution_error_affine} we obtain
\begin{multline*}
\norm{u-\tilde{u}_h}_{\HD{\mu}}\leq\norm{u-I_hu}_{\HD{\mu}}+\norm{I_hu-\tilde{u}_h}_{\HD{\mu}}\\
\lesssim h^{s+1/2-\varepsilon-\mu}\norm{u}_{\HD{s+1/2-\varepsilon}}
+h^{s-\mu}\norm{I_hu-\tilde{u}_h}_{\HD{s}}\\
\lesssim h^{s+1/2-\varepsilon-\mu}\norm{f}_{H^{1/2-\varepsilon}(\D)}
+h^{s-\mu}\left(\norm{I_hu-u}_{\HD{s}}+\norm{u-\tilde{u}_h}_{\HD{s}}\right)\\
\lesssim h^{s+1/2-\varepsilon-\mu}\norm{f}_{H^{1/2-\varepsilon}(\D)}.
\end{multline*}
Then, it immediately follows 
that 
\[
\norm{\tilde{u}_h}_{\HD{\mu}}\leq\norm{\tilde{u}_h-u}_{\HD{\mu}}+\norm{u}_{\HD{s}}\lesssim (h^{s+1/2-\varepsilon-\mu} +1)\norm{f}_{H^{1/2-\varepsilon}(\D)},\] 
and using $h\lesssim 1$ we conclude~\eqref{eq:state_norm_1} .
Next, using a duality argument we prove the case $\mu\in[0,s)$. For $\chi\in\HD{s}$ we consider the following problem
\[
a(v,\chi;\q)=l(v)\quad\forall v\in\HD{s},
\]
where $l(v):=(u-\tilde{u}_h,v)_{\HD{\mu}}$. It is clear that $l\in H^{-\mu}(\D)$, and applying Theorem~\ref{thm:regularity_linear} we obtain that $\chi\in\HD{s+\alpha}$ with $\alpha=\min\{s-\mu,1/2-\varepsilon\}$, and 
\begin{equation}
\norm{\chi}_{\HD{s+\alpha}}\lesssim \norm{u-\tilde{u}_h}_{\HD{\mu}}.\label{eq:proof_smth1}
\end{equation}
Now choosing $v=u-\tilde{u}_h\in\HD{s}$, and using Lemma~\ref{lem:global-approximation}, Theorem~\ref{thm:solution_error_affine},~\eqref{eq:interpolation-error},~\eqref{eq:stability-interpolation}, and taking into account that $a(u,I_h\chi;\q)-\tilde{a}(\tilde{u}_h,I_h\chi)=0$, we obtain
\begin{multline*}
\norm{u-\tilde{u}_h}^2_{\HD{\mu}}
=a(u-\tilde{u}_h,\chi;\q) - a(u,I_h\chi;\q)+\tilde{a}(\tilde{u}_h,I_h\chi;\q) \\
\leq\abs{a(u-\tilde{u}_h,\chi-I_h\chi;\q)}+\abs{a(\tilde{u}_h,I_h\chi;\q)-\tilde{a}(\tilde{u}_h,I_h\chi;\q)}\\
\leq \norm{u-\tilde{u}_h}_{\HD{s}}\norm{\chi-I_h\chi}_{\HD{s}}+\eta\norm{\tilde{u}_h}_{\HD{\bar{s}_2(s)}}\norm{I_h\chi}_{\HD{s}}\\
\lesssim h^{\alpha+1/2-\varepsilon}\norm{\chi}_{\HD{s+\alpha}}\norm{f}_{H^{1/2-\varepsilon}(\D)}+\eta\norm{\tilde{u}_h}_{\HD{s+1/2-\varepsilon}}\norm{\chi}_{\HD{s}}\\
\lesssim \left(h^{\alpha+1/2-\varepsilon}+\eta\right)\norm{u-\tilde{u}_h}_{\HD{\mu}}\norm{f}_{H^{1/2-\varepsilon}(\D)},
\end{multline*}
where the last two inequalities have been obtained by using that $\tilde{u}_h\in\HD{s+1/2-\varepsilon}\hookrightarrow\HD{\bar{s}_2(s)}$, where $\bar{s}_2(s)$ such as in Lemma~\ref{lem:global-approximation}, and $\norm{\tilde{u}_h}_{\HD{s+1/2-\varepsilon}}\lesssim\norm{f}_{H^{1/2-\varepsilon}}$ by~\eqref{eq:state_norm_1}, and invoking the stability estimate 
\[
\norm{\chi}_{\HD{s}}\lesssim\norm{u-\tilde{u}_h}_{\HD{\mu}}
\]
together with~\eqref{eq:proof_smth1}. Then, dividing by $\norm{u-\tilde{u}_h}_{\HD{\mu}}$ we obtain the desired result~\eqref{eq:state_error_1} for $\mu\in[0,s)$. The remaining norm bound~\eqref{eq:state_norm_1} is obtained by the similar arguments as before. This concludes the proof. 
\end{proof}

For $\tilde{u}_h,\tilde{z}_h\in V_h$ the solutions of~\eqref{eq:discrete_var_form} and~\eqref{eq:adjoint_discrete_affine}, respectively, we define
\[
\tilde{j}_h^{\prime}(\q):=\mathcal{R}^\prime(\q)-\tilde{a}^\prime_q(\tilde{u}_h,\tilde{z}_h;\q). 
\]
Then, we obtain the following error estimate for the derivative of the reduced cost functional.

\begin{theorem}\label{thm:objective_error_affine}
For $\q\in\Q$, $s\in(0,1)$, $\varepsilon>0$ the following estimate holds
  \begin{align}
    \abs{j^\prime(\q)-\tilde{j}^\prime_h(\q)}\lesssim\begin{dcases}
   |\log h|\left(h^{s+1/2-\varepsilon}+\eta\right)+\tilde{\eta}\quad &s<1/2,\\
      h^{3/2-s-3\varepsilon}+\eta+\tilde{\eta},\quad &s\geq 1/2,
    \end{dcases}\label{eq:error_obj_derivative}
  \end{align}
  where $\eta$ is defined as in Theorem~\ref{thm:solution_error_affine}, $\tilde{\eta}=\max_{k}\tilde{\eta}_k$ and $\tilde{\eta}_k$ is defined as in~\eqref{eq:derivative_bilform_error}.
\end{theorem}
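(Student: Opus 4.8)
The plan is to note that the regularization terms $\mathcal{R}^\prime(\q)$ in $j^\prime$ and $\tilde{j}^\prime_h$ coincide and cancel, leaving
\[
|j^\prime(\q)-\tilde{j}^\prime_h(\q)| = |a^\prime_\q(u,z;\q)-\tilde{a}^\prime_\q(\tilde{u}_h,\tilde{z}_h;\q)|,
\]
and then to estimate separately the two components $a^\prime_s$ and $a^\prime_\delta$ of the parametric gradient. Since the $\delta$-derivative is evaluated exactly through the operator splitting of Section~\ref{sec:interp-form} (only the $s$-derivative is interpolated), I would insert the intermediate quantity $a^\prime_s(\tilde{u}_h,\tilde{z}_h;\q)$ to split the $s$-component into a solution error and an interpolation error,
\begin{multline*}
a^\prime_s(u,z;\q)-\tilde{a}^\prime_s(\tilde{u}_h,\tilde{z}_h;\q)\\
= \underbrace{\bigl(a^\prime_s(u,z;\q)-a^\prime_s(\tilde{u}_h,\tilde{z}_h;\q)\bigr)}_{\text{solution error}} + \underbrace{\bigl(a^\prime_s(\tilde{u}_h,\tilde{z}_h;\q)-\tilde{a}^\prime_s(\tilde{u}_h,\tilde{z}_h;\q)\bigr)}_{\text{interpolation error}},
\end{multline*}
and treat the $\delta$-component analogously, noting that it carries no interpolation error.

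The interpolation error is the easy term: applying Lemma~\ref{lem:derivative_bilform_error} on each subinterval $\mathcal{S}_k$ to the pair $(\tilde{u}_h,\tilde{z}_h)$ bounds it by $\tilde{\eta}_k\norm{\tilde{u}_h}_{\HD{s_{2,k}}}\norm{\tilde{z}_h}_{\HD{s_{1,k}}}$, and the uniform norm bound \eqref{eq:state_norm_1} of Lemma~\ref{lemma:aux_error_estimate} controls both factors by $\norm{f}_{H^{1/2-\varepsilon}(\D)}$ and $C_d$; taking the maximum over $k$ produces the $\tilde{\eta}$ term. For the solution error I would use bilinearity,
\[
a^\prime_s(u,z)-a^\prime_s(\tilde{u}_h,\tilde{z}_h) = a^\prime_s(u-\tilde{u}_h,z) + a^\prime_s(\tilde{u}_h,z-\tilde{z}_h),
\]
and bound each summand with the boundedness estimate \eqref{eq:bdd_der_a_s}, pairing the error factor ($u-\tilde{u}_h$ or $z-\tilde{z}_h$, measured in an intermediate order $\HD{\mu}$ via \eqref{eq:state_error_1} and \eqref{eq:adj_error_1}) against the smooth factor ($z$ or $\tilde{u}_h$, in $\HD{s+1/2-\varepsilon}$). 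The $\delta$-component is analogous but uses the $L^2$-boundedness \eqref{eq:bdd_der_a_delta} together with \eqref{eq:state_error_1}, \eqref{eq:adj_error_1} at $\mu=0$; its rate $h^{\min\{s+1/2,1-\varepsilon\}-\varepsilon}+\eta$ turns out to be dominated by the $s$-component and can be discarded.

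The hard part is the choice of Sobolev exponents in \eqref{eq:bdd_der_a_s} for the $s$-component, and this is exactly what generates the case distinction. The constant there behaves like $C(1,\hat{\varepsilon})\sim \hat{\varepsilon}^{-1}$ with $\hat{\varepsilon}=s_1+s_2-2s>0$, reflecting the logarithmic kernel of $a^\prime_s$, while the error factor in $\HD{\mu}$ converges at rate $h^{s+1/2-\mu-\varepsilon}$. For $s<1/2$ I would drive the intermediate order $\mu\to 0$, keeping the smooth factor at its full regularity $s+1/2-\varepsilon$; then $\hat{\varepsilon}\to 1/2-s-\varepsilon$ and the rate approaches $h^{s+1/2-\varepsilon}$, but as $s\uparrow 1/2$ the constant degenerates, and optimizing $\hat{\varepsilon}^{-1}h^{s+1/2-\mu-\varepsilon}$ over the admissible $\mu$ (the optimum occurring at $\hat{\varepsilon}\sim 1/|\log h|$) yields the uniform factor $|\log h|$. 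For $s\geq 1/2$ the restriction $s_1,s_2\in(0,1)$ in \eqref{eq:bdd_der_a_s} becomes binding: the smooth factor is usable only up to $s_1<1$, which forces $s_2>2s-1+\varepsilon$ and lowers the rate to $h^{3/2-s-3\varepsilon}$; keeping $\hat{\varepsilon}\sim\varepsilon$ fixed then absorbs the logarithm into the extra $h^{-\varepsilon}$. Collecting the dominant $s$-contribution with the interpolation term $\tilde{\eta}$ and the subordinate $\eta$ gives \eqref{eq:error_obj_derivative}.
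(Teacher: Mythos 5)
Your proposal is correct and follows essentially the same route as the paper's proof: the identical five-term decomposition (bilinearity splitting of the $s$- and $\delta$-solution errors plus the $s$-interpolation error), the same key estimates \eqref{eq:bdd_der_a_s}, \eqref{eq:bdd_der_a_delta}, Lemma~\ref{lem:derivative_bilform_error} and Lemma~\ref{lemma:aux_error_estimate}, and the same mechanism for the case split ($s_1<1$ forcing $s_2>2s-1$ for $s\geq 1/2$; the constant $C(1,\hat\varepsilon)\sim\hat\varepsilon^{-1}$ balanced at $\hat\varepsilon\sim 1/|\log h|$ producing the logarithm for $s<1/2$). The only cosmetic difference is which factor carries the small Sobolev index in the $s<1/2$ case (the paper fixes $\norm{z}_{\HD{2s}}$ and puts the error factor in $\HD{\xi}$ with $\xi\sim 1/|\log h|$), which leads to the same bound.
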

\begin{proof}
We consider
\begin{multline}
\abs{j^\prime(\q)-\tilde{j}^\prime_h(\q)}\leq\abs{a^{\prime}_\q(u,z;\q)-\tilde{a}^{\prime}_\q(\tilde{u}_h,\tilde{z}_h;\q)}\\
\leq \abs{a^\prime_s(u-\tilde{u}_h,z;\q)}
+\abs{a^\prime_s(\tilde{u}_h,z-\tilde{z}_h;\q)}
+\abs{a^\prime_s(\tilde{u}_h,\tilde{z}_h;\q)-\tilde{a}^\prime_s(\tilde{u}_h,\tilde{z}_h;\q)}\\
+\abs{a^\prime_\delta(u-\tilde{u}_h,z;\q)}+\abs{a^\prime_\delta(\tilde{u}_h,z-\tilde{z}_h;\q)}.\label{eq:proof_redcost_1}
\end{multline}
Next, we estimate each term from the above.  For the first two terms we distinguish between two cases $s<1/2$ and $s\geq 1/2$. In particular, for $s\in(0,1/2)$, using~\eqref{eq:bdd_der_a_s} with $s_1=2s$ and $s_2=\xi$, $0<\xi<2(1-s)$,~\eqref{eq:adjoint_regularity},~\eqref{eq:inverse_inequality}, and~Lemma~\ref{lemma:aux_error_estimate}, we can estimate
\begin{multline*}
\abs{a^\prime_s(u-\tilde{u}_h,z;\q)}\leq C(1,\xi)\norm{u-\tilde{u}_h}_{\HD{\xi}}\norm{z}_{\HD{2s}}\\
\leq C(1,\xi)\left(\norm{u-I_h u}_{\HD{\xi}}+\norm{I_hu-\tilde{u}_h}_{\HD{\xi}} \right)\norm{z}_{\HD{2s}}\\
\lesssim C(1,\xi)\left(h^{s+1/2-\varepsilon-\xi}+h^{-\xi}\norm{I_hu-\tilde{u}_h}_{L^2(\D)}\right)\norm{z}_{\HD{2s}}\\
\lesssim C(1,\xi)h^{-\xi}\left(h^{s+1/2-\varepsilon}+\norm{I_hu-{u}}_{L^2(\D)}+\norm{u-\tilde{u}_h}_{L^2(\D)}\right)\norm{z}_{\HD{2s}}\\
\lesssim|\log h|\left(h^{s+1/2-\varepsilon}+\eta\right),
\end{multline*}
where the last inequality has been obtained by taking $\xi\sim -1/\log h$, and noting that $C(1,\beta)$ in~\eqref{eq:bdd_der_a_s} scales like $C(1,\beta)\sim \xi^{-1}$, and, hence, $C(1,\xi)h^{-\xi}\sim|\log h|$. Similarly, we estimate the second term in~\eqref{eq:proof_redcost_1} for $s\in(0,1/2)$ and obtain
\begin{multline*}
\abs{a^\prime_s(\tilde{u}_h,z-\tilde{z}_h;\q)}\leq C(1,\xi)\norm{z-\tilde{z}_h}_{\HD{\xi}}\norm{\tilde{u}_h}_{\HD{2s}}\\
\lesssim C(1,\xi)h^{-\xi}\left(h^{s+1/2-\varepsilon}\norm{z}_{\HD{s+1/2-\varepsilon}}+\norm{I_h z-\tilde{z}_h}_{L^2(\D)}\right)
\norm{\tilde{u}_h}_{\HD{2s}}\\
\lesssim|\log h|\left(h^{s+1/2-\varepsilon}+\eta\right), 
\end{multline*}
where we have invoked Lemma~\ref{lemma:aux_error_estimate} to estimate $\norm{\tilde{u}_h}_{\HD{2s}}\lesssim\norm{f}_{H^{1/2-\varepsilon}(\D)}$. 
\\
Next, we consider the case $s\in[1/2,1)$. As previously, using~\eqref{eq:bdd_der_a_s} with $s_1=1-\varepsilon$ and $s_2=2s-1+2\varepsilon$,~\eqref{eq:adjoint_regularity},~\eqref{eq:inverse_inequality}, and~Lemma~\ref{lemma:aux_error_estimate} for $\mu=2s-1+2\varepsilon$ we obtain
\begin{equation*}
\abs{a^\prime_s(u-\tilde{u}_h,z;\q)}\lesssim \norm{u-\tilde{u}_h}_{\HD{2s-1+2\varepsilon}}\norm{z}_{\HD{1-\varepsilon}}
\lesssim h^{3/2-s-3\varepsilon}+\eta.
\end{equation*}
Similarly, it follows for the second term in~\eqref{eq:proof_redcost_1}
\begin{equation*}
\abs{a^\prime_s(\tilde{u}_h,z-\tilde{z}_h;\q)}\lesssim \norm{z-\tilde{z}_h}_{\HD{2s-1+2\varepsilon}}\norm{\tilde{u}_h}_{\HD{1-\varepsilon}}
\lesssim h^{3/2-s-3\varepsilon}+\eta.
\end{equation*}
Next, using~\eqref{eq:bdd_der_a_delta},~\eqref{eq:poincare} and Lemma~\ref{lemma:aux_error_estimate} we can estimate the last two terms in~\eqref{eq:proof_redcost_1}
\begin{multline*}
\abs{a^\prime_\delta(u-\tilde{u}_h,z;\q)}+\abs{a^\prime_\delta(\tilde{u}_h,z-\tilde{z}_h;\q)}\\
\lesssim\norm{u-\tilde{u}_h}_{L^2(\D)}\norm{z}_{\HD{s}}+\norm{z-\tilde{z}_h}_{L^2(\D)}\norm{\tilde{u}_h}_{\HD{s}}\\
\lesssim\left(h^{\alpha+1/2-\varepsilon}+\eta\right)\left(\norm{f}_{H^{1/2-\varepsilon}}+\norm{\ud}_{H^{1/2-\varepsilon}}\right),
\end{multline*}
where $\alpha=\min\{s,1/2-\varepsilon\}$.
Eventually, we estimate the error due to the affine inerpolation of the bilinear form. In particular, invoking Lemma~\ref{lem:derivative_bilform_error} with $s_{2,k}=\bar{s}_2(s)$ and $s_{1,k}=s_{\min,k}$, where we recall $\overline{s}_2(s)=\min\{1,s_{\min,k}+1/2\}-\varepsilon$ for \(s\in\mathcal{S}_{k}\), and using the continuous embedding $\HD{s+1/2-\varepsilon}\hookrightarrow\HD{\overline{s}_2(s)}$, we deduce
\begin{align*}
\abs{a^\prime_s(\tilde{u}_h,\tilde{z}_h;\q)-\tilde{a}^\prime_s(\tilde{u}_h,\tilde{z}_h;\q)}\lesssim\tilde{\eta}\norm{\tilde{u}_h}_{\HD{s+1/2-\varepsilon}}\norm{\tilde{z}_h}_{\HD{s}}
\lesssim \tilde{\eta},
\end{align*}
where, as before, we used the fact that $\norm{\tilde{u}_h}_{\HD{s+1/2-\varepsilon}}\lesssim 1$, thanks to~\eqref{eq:state_norm_1}, and $\tilde{\eta}=\max_{k}\tilde{\eta}_k$. Then, combining all above estimates we obtain the desired result.
\end{proof}

\section{Numerical results}\label{sec:numerics}

In this section, we illustrate the theoretical results via numerical experiments.
We first report on the accuracy of the interpolant of the bilinear form \(a(\cdot,\cdot)\) and its \(s\)-derivative, and then solve identification problems either for \(s\) only or \(s\) and \(\delta\) jointly.

In all cases, a panel clustering approach is used to avoid prohibitively expensive dense matrices~\cite{AinsworthGlusa2018}.
In a nutshell, assembly is split into near field and far field interactions.
Far field interactions are then approximated using Chebyshev interpolation, where order and interpolation domain depend on the distance of the interaction.
Specially designed quadrature rules account for the singular behavior of the kernel, leading to assembly cost of \(\mathcal{O}(N \log^{2d} N)\) for fixed $\delta$ and $s$, where \(N=\dim V_{h}\) \cite{AinsworthGlusa2017}.
The error due to quadrature is always dominated by the discretization error, which is why we do not explicitly keep track of it and why we still denote the bilinear form involving quadrature as \(a(\cdot,\cdot;\cdot)\).
The equations for state and adjoint are solved using the conjugate gradient method, preconditioned by geometric multigrid, resulting in quasi-optimal complexity for both problem setup and solution.
The discretized control problems are solved using the BFGS algorithm~\cite{NocedalWright2006_NumericalOptimization2nd}.

We will consider the following test problems:
\begin{enumerate}
\item[\textbf{I}]
  The fractional Laplacian of order \(s\) on the \(n\)-dimensional ball.\\
  Let \(\Omega=B_{1}(0)\subset \mathbb{R}^{n}\), \(\delta=\infty\) and \((-\Delta)^{s}:=C_{n,s}\mathcal{L}\), where \(\mathcal{L}\) is given by \eqref{nonl_operator} and \(C_{n,s}\) is given by \eqref{eq:FL_constant}.
  The analytic solution of
  \begin{align*}
    \left\{\begin{array}{rcll}
             (-\Delta)^{s}u&=& 1 & \text{in } \Omega,\\
             u&=&0& \text{in } \mathbb{R}^{n}\setminus\Omega.
           \end{array}\right.
  \end{align*}
  is given by \cite{Getoor1961_FirstPassageTimesSymmetric}
  \begin{align*}
    u_{ex}(\x;s)=c_{n,s} (1-\abs{\x}^{2})_{+}^{s} \in H_{\Omega}^{s+1/2-\varepsilon}, \quad\forall \varepsilon>0,
  \end{align*}
  with \(c_{n,s}= \frac{\Gamma(n/2)}{2^{2s}\Gamma(\frac{n+2s}{2})\Gamma(1+s)}\).
\item[\textbf{II}]
  The nonlocal operator \(\mathcal{L}\) of order \(s\) and truncation length \(\delta\) on the \(n\)-dimensional ball.\\
  We solve \eqref{eq:nonl_strong} with right-hand side \(f=1\), for which no analytic solution is known.
\end{enumerate}

\subsection{Convergence of the interpolation}\label{sec:interpolation-convergence}
In this section we illustrate the convergence of the interpolation for both the operator and the gradient of the functional.

\subsubsection{Convergence of the operator interpolation}\label{sec:interpolation-operator-convergence}

In order to illustrate the results of Theorem~\ref{thm:solution_error_affine}, we solve problem \textbf{I} for \(s=\frac{1}{10},\frac{2}{10},\dots,\frac{9}{10}\) and dimension \(n=1\) for mesh sizes \(h=2^{-j}\), \(j=4,\dots,10\), using the operator interpolation \(\tilde{a}(\cdot,\cdot; s,\infty)\) and the exact (up to quadrature error) operator \(a(\cdot,\cdot; s,\infty)\).
The free parameter \(\xi\) is chosen via brute-force minimization with respect to the total number of interpolation nodes.
We note that this procedure is inexpensive, since only intervals \(\mathcal{S}_{k}\) and interpolation orders \(M_{k}\) are manipulated.

\begin{figure}
  \centering
  \includegraphics[width = 0.9\textwidth]{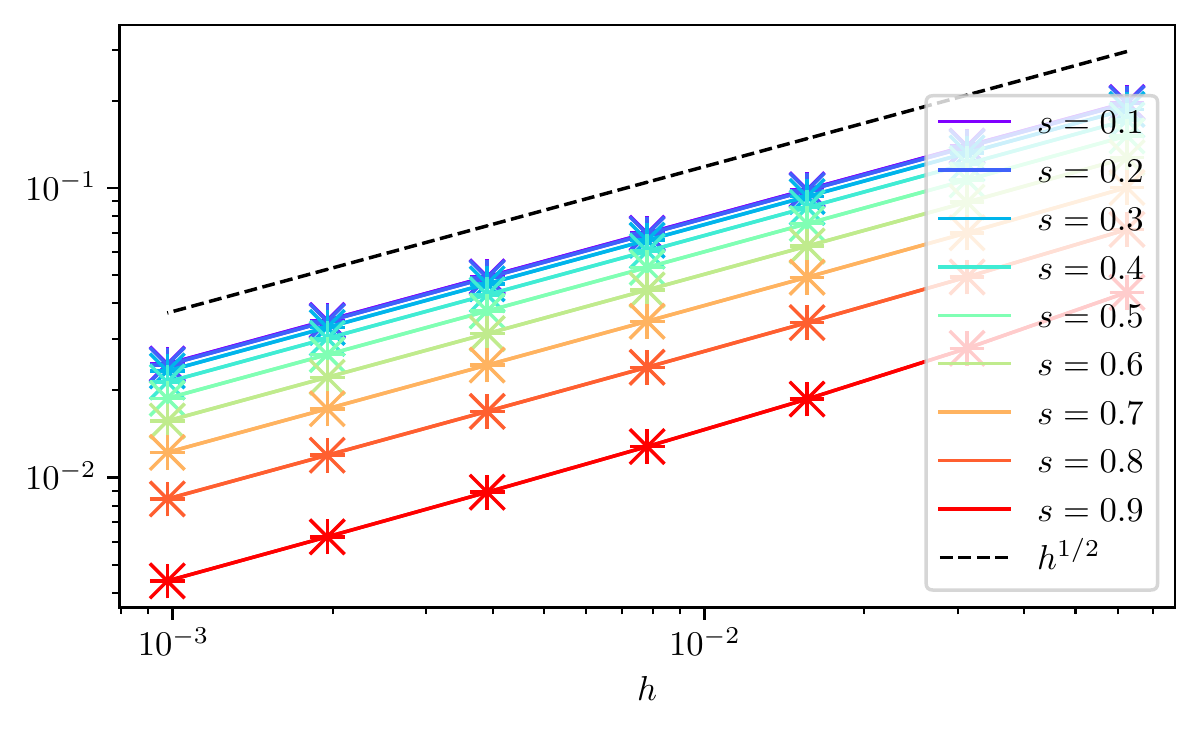}
  \includegraphics[width = 0.88\textwidth]{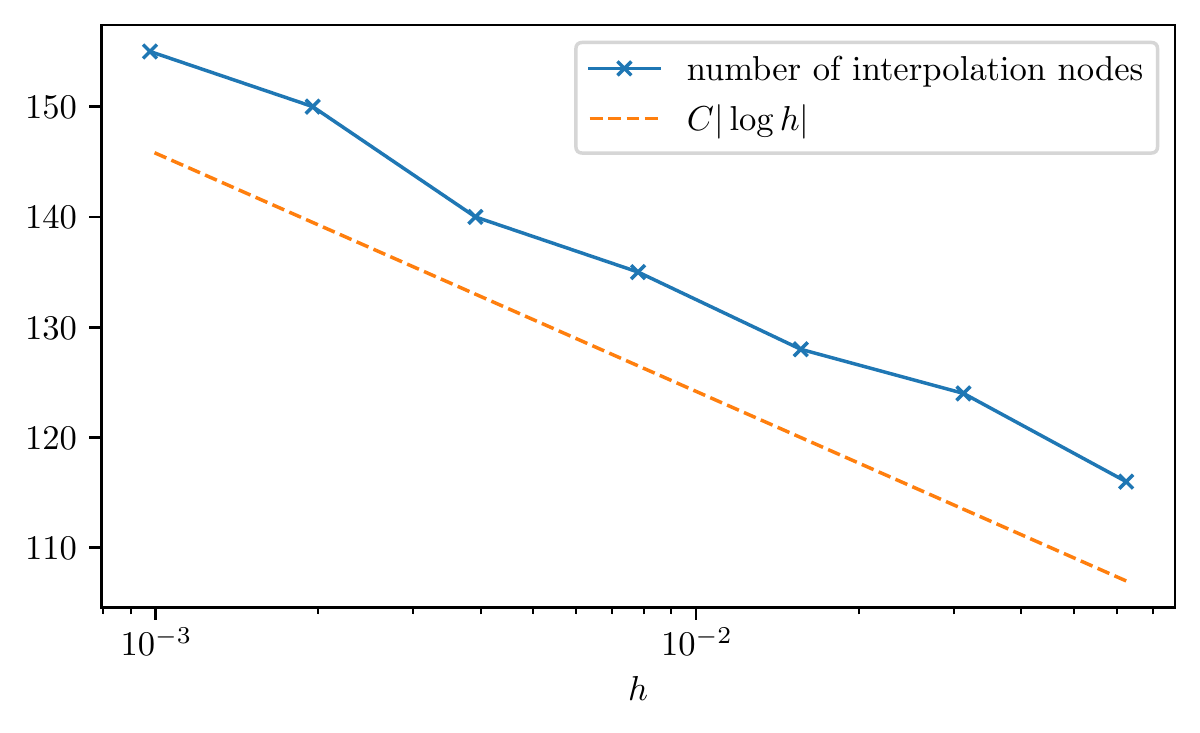}
  \includegraphics[width = 0.9\textwidth]{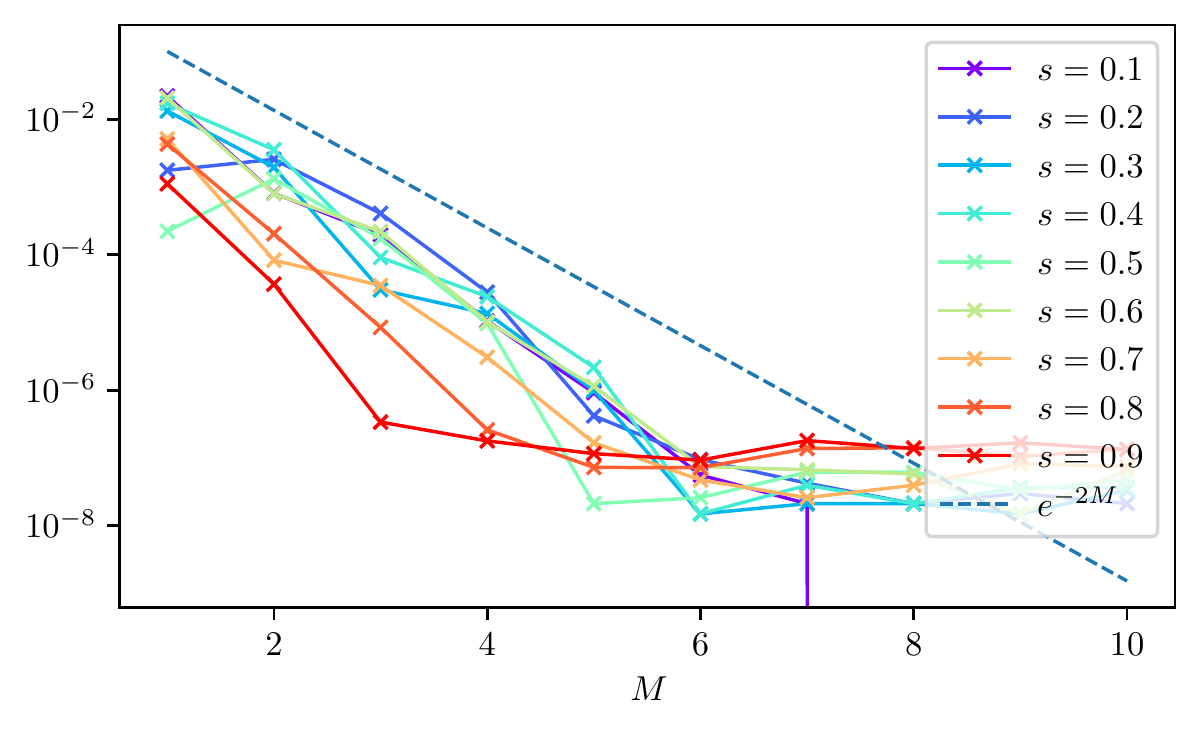}
  \caption{
    Solution errors with interpolation, \(\norm{\tilde{u}_{h}(s)-u(s)}_{H_{\Omega}^{s}}\) (\(\times\)), and without interpolation, \(\norm{u_{h}(s)-u(s)}_{H_{\Omega}^{s}}\) (\(+\)), for \(h=2^{-j}\), \(j=4,\dots,10\) (\emph{top}).
    Total number of interpolation nodes (\emph{middle}).
    Exponential convergence of the interpolation error \(\norm{\tilde{u}_{h,M}(s)-u_{h}(s)}_{H_{\Omega}^{s}}\)  with respect to \(M\) (\emph{bottom}).
  }
  \label{fig:interpolation-error}
\end{figure}

In Figure~\ref{fig:interpolation-error} we plot the errors \(\norm{\tilde{u}_{h}(s)-u(s)}_{H_{\Omega}^{s}}\) and \(\norm{u_{h}(s)-u(s)}_{H_{\Omega}^{s}}\), where \(\tilde{u}_{h}(s)\) and \(u_{h}(s)\) are the solutions obtained with and without interpolation.
It can be observed that both methods lead to virtually identical error.
We also plot the total number of interpolation nodes and observe that it indeed is proportional to \(\abs{\log h}\).
To further illustrate the exponential convergence with respect to the interpolation order, we also plot \(\norm{\tilde{u}_{h,M}(s)-u_{h}(s)}_{H_{\Omega}^{s}}\) for \(h=2^{-10}\), a fixed value of \(\xi\) and \(1\leq M\leq 12\) interpolation nodes on each interval \(\mathcal{S}_{k}\).
The observed plateau is due to quadrature error that impacts both \(u_{h,M}\) and \(u_{h}\).
However, we note that the magnitude of this error is negligible compared to the discretization error, as can be seen from Figure~\ref{fig:interpolation-error}.

\subsubsection{Convergence of the gradient approximation}\label{sec:gradient-convergence}

We evaluate the convergence of the approximation of the \(s\)-derivative.
We set \(u_{d}=1-\x^{2}\) and consider the optimal control problem with respect to $s$ for problem \textbf{I}.
For \(s=\frac{1}{10},\frac{2}{10},\dots,\frac{9}{10}\), we compute
\begin{align*}
  \abs{\tilde{j}_{h}'(s,\infty) - \tilde{j}_{\underline{h}}'(s,\infty)},
\end{align*}
where \(\tilde{j}_{h}\) is the reduced cost functional, evaluated on a mesh of size \(h\) (Figure~\ref{fig:derivative-interpolation-error}) and \(\underline{h}\ll h\).
We notice that the observed convergence rates are of order $\mathcal{O}(h)$, which indicate that the derived error estimates~\eqref{eq:error_obj_derivative} are nearly optimal for $s=1/2$.
We believe that an improved order of convergence could be obtained theoretically also for $s>1/2$, however this would require to re-derive the estimates~\eqref{eq:bdd_der_a_s} for $s_2,s_1\geq 1$, which is out of the scope of the current work.
We also report, for fixed mesh size \(h=2^{-10}\),
\begin{align*}
  \abs{\tilde{j}_{M}'(s,\infty) - \tilde{j}_{\overline{M}}'(s,\infty)},
\end{align*}
where \(\tilde{j}_{M}\) is the reduced cost functional using \(M\) interpolation nodes on each interval \(\mathcal{S}_{k}\) and \(M\ll \overline{M}\).
Again, we observe exponential convergence until the quadrature error is reached, which is in agreement with the theoretical results.

\begin{figure}
  \centering
  \includegraphics[width = 0.9\textwidth]{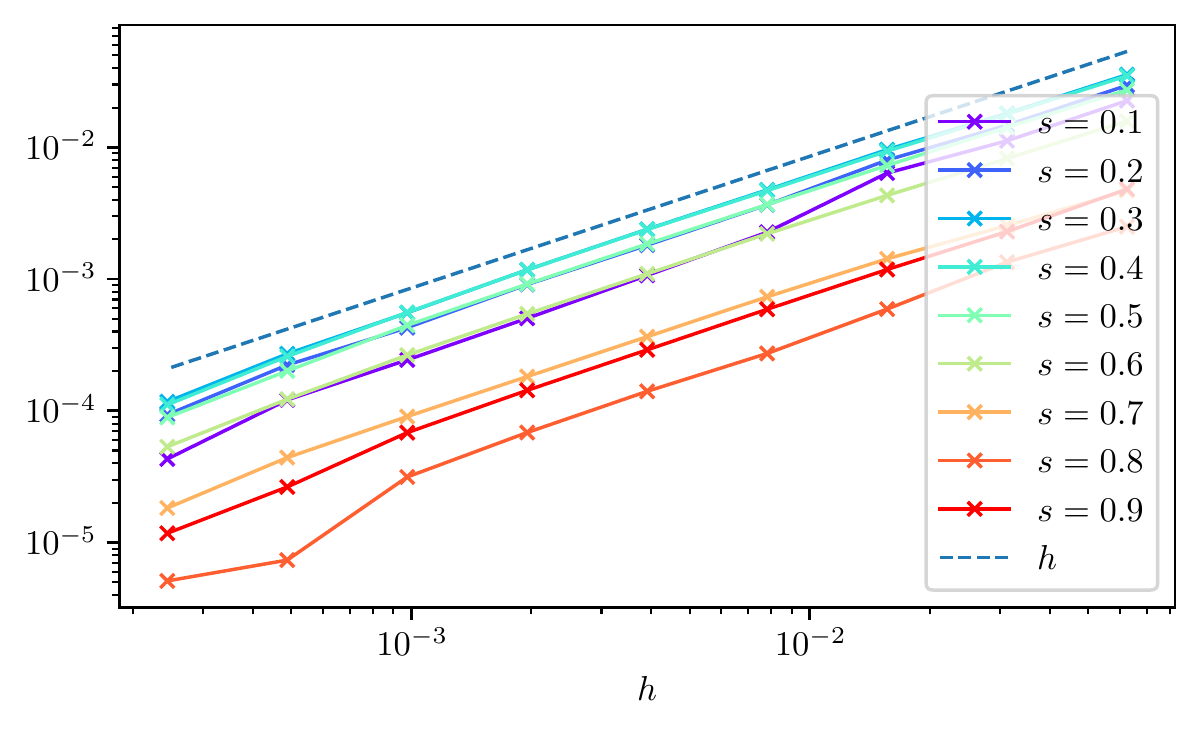}
  \includegraphics[width = 0.9\textwidth]{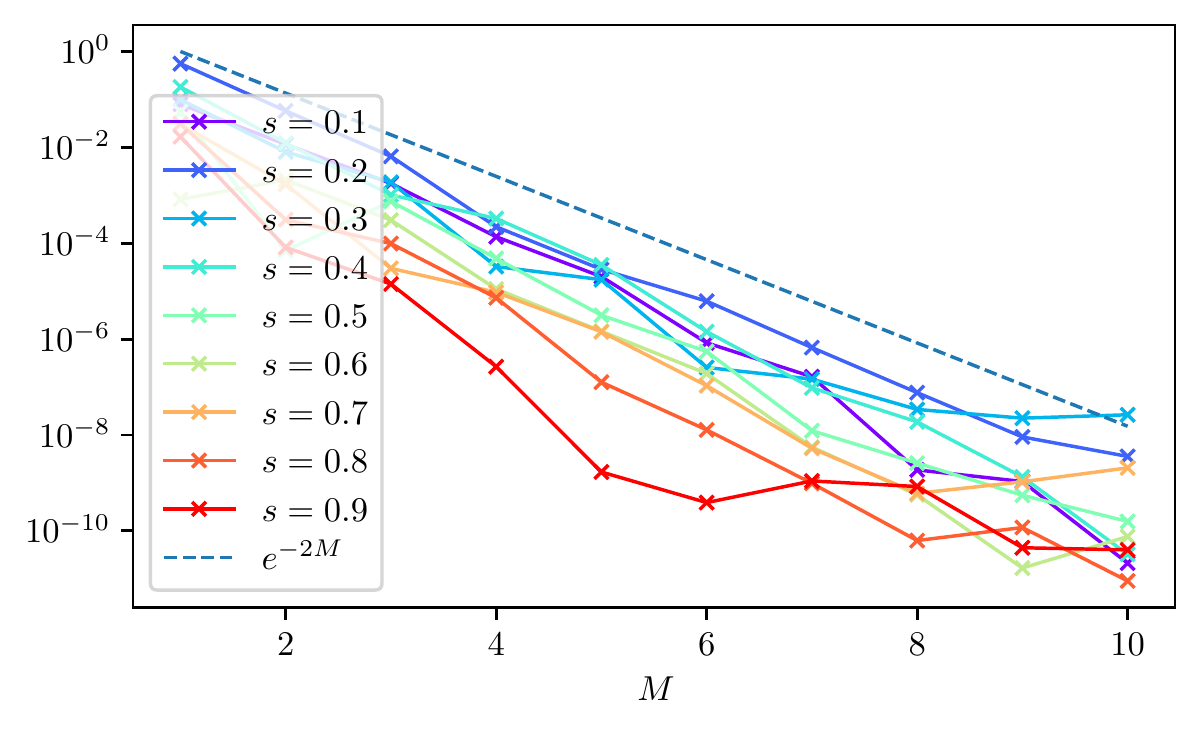}
  \caption{
    Convergence of the derivative of the reduced cost functional (\emph{top}).
    Exponential convergence of the derivative of the reduced cost functional with respect to \(M\) (\emph{bottom}).
  }
  \label{fig:derivative-interpolation-error}
\end{figure}

\subsection{Complexity and memory requirements}

For convenience, we summarize the complexity and memory requirements of the operator approximation for \(a(\cdot,\cdot;s,\delta)\).
In a preprocessing step, \(a(\cdot,\cdot;s_{m},\infty)\) is assembled for all Chebyshev nodes \(s_{m}\in\mathbb{S}_{k}\), \(k=1,\dots K\).
By combining Lemma~\ref{lem:global-approximation} with Theorem~\ref{thm:solution_error_affine}, the number of required Chebyshev nodes scales like \(\abs{\log h}\sim \log N\), where \(N=\dim V_{h}\).
By using the panel clustering approach of \cite{AinsworthGlusa2018}, each operator evaluation costs \(\mathcal{O}(N \log^{2n} N)\) operations and memory.
Hence, the overall cost of the preprocessing step scales as \(\mathcal O(N \log^{2n+1} N)\) in complexity and memory.

The assembly of the correction terms \(c(\cdot,\cdot;s_{m},\delta)\) for \(\delta<\infty\) costs \(\mathcal{O}(N \log^{2n} N)\) when using panel clustering.
Since matrix-vector products involving \(a\) and \(c\) scale in the same fashion as the assembly, solving linear systems involving \(a(\cdot,\cdot;s,\delta)\) for state and adjoint solution using multigrid is achieved in \(\mathcal{O}(N \log^{2n+1} N)\) operations.
Overall, each BFGS iteration has \(\mathcal{O}(N \log^{2n+1} N)\) complexity.
This shows that the presented approach is quasi-optimal in the number of unknowns.

We illustrate the efficiency of the approach by constructing all required operators of problem \textbf{II} for \(s\in\{0.25, 0.75\}\) and \(\delta\in\{0.5,1.5,2.5\}\) and different mesh sizes \(h\).
All computations were performed on a single core of a Intel Xeon E5-2650 processor.
\begin{figure}
  \centering
  \includegraphics[width = 0.9\textwidth]{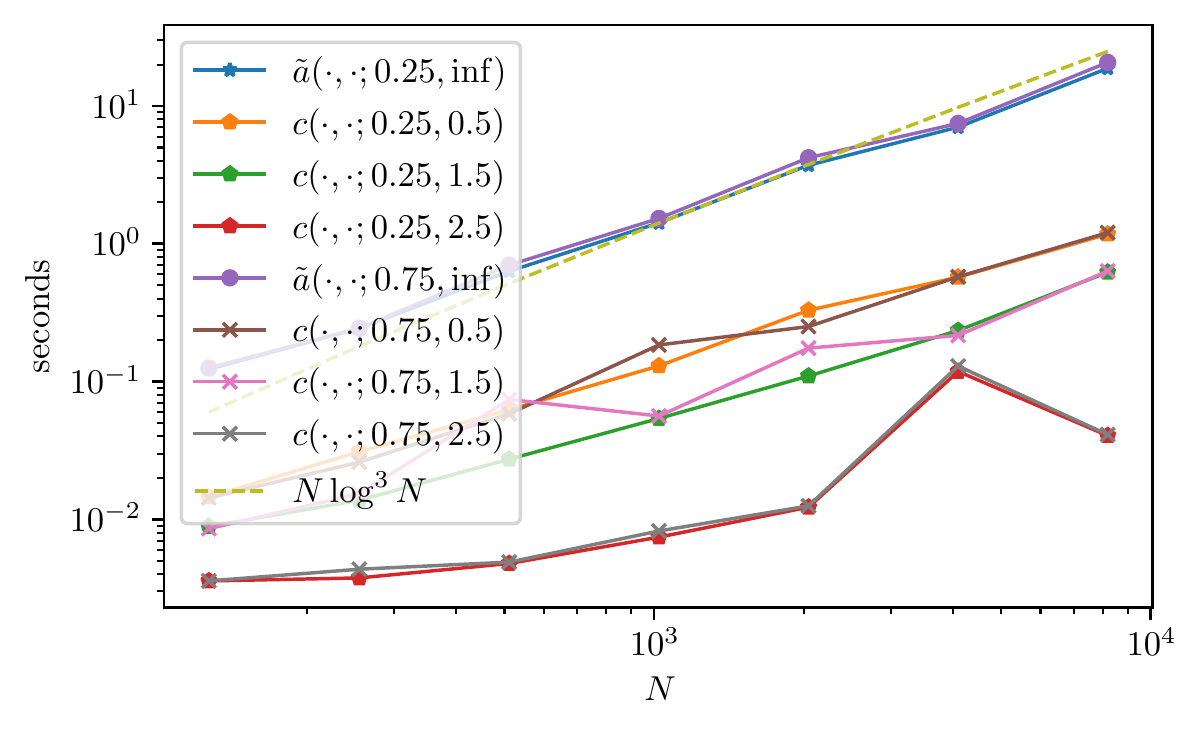}
  \includegraphics[width = 0.9\textwidth]{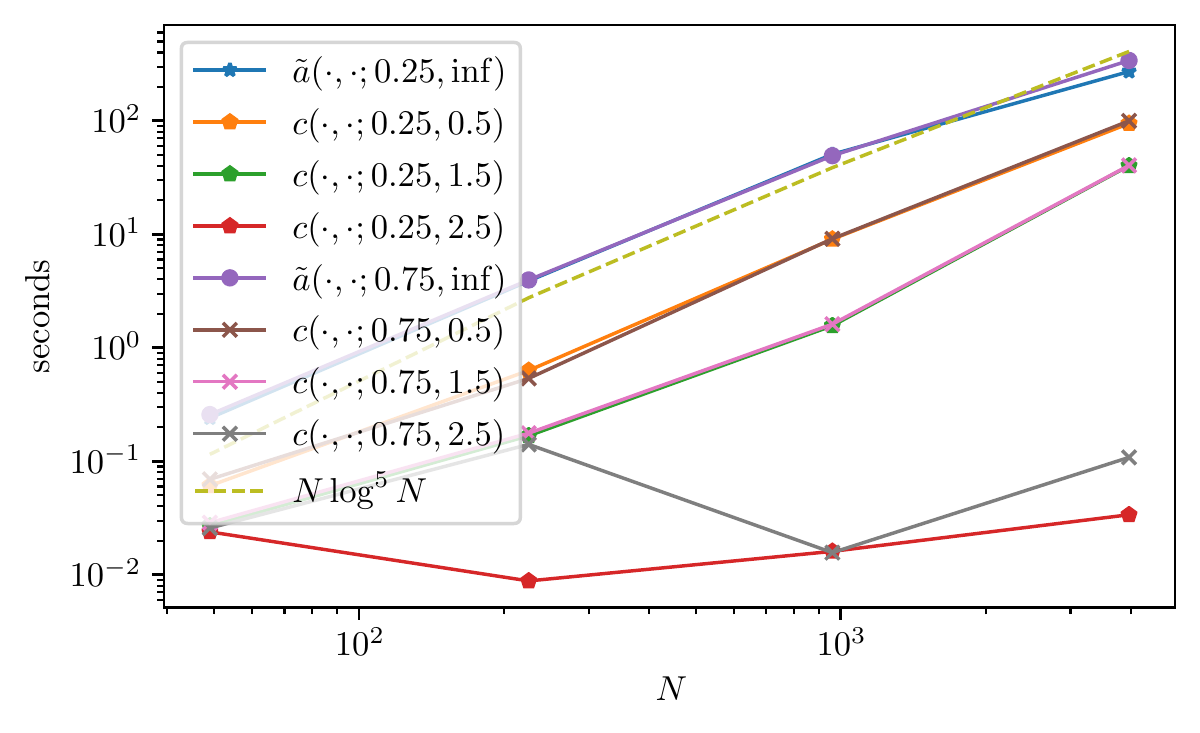}
  \caption{
    Assembly timings for \(\tilde{a}(\cdot,\cdot;s,\infty)\) and correction terms \(c(\cdot,\cdot;s,\delta)\) in 1D (\emph{top}) and 2D (\emph{bottom}).
  }
  \label{fig:timings}
\end{figure}
The timings for assembly of \(\tilde{a}(\cdot,\cdot;s,\infty)\) and \(c(\cdot,\cdot;s,\delta)\) given in Figure~\ref{fig:timings} show that we indeed recover quasi-optimal complexity.
We observe that assembly of the corrections for \(\delta\geq\operatorname{diam}\Omega\) is significantly cheaper, since the last term of \eqref{eq:correction} drops out, and assembly of the purely local mass terms only has \(\mathcal{O}(N)\) complexity.

\subsection{Consistency of the identification strategy}\label{sec:consistency}
For one- and two-dimensional problems we test the robustness of our optimization approach by using the manufactured solutions described in {\bf I} and {\bf II}.

\subsubsection{The fractional Laplacian case}

We consider test problem \textbf{I}, and set \(\mathcal{R}(s)=\frac{\alpha}{s(1-s)}\), \(\alpha=5\times 10^{-7}\) in order to satisfy conditions \eqref{eq:regularization-conditions}.
Set \(s^{*}=0.5\) and \(u_{d}=u_{ex}(s^{*})\).
The exact solution of the minimization problem \eqref{eq:pr_min} is given by \((s^{*}, u_{ex}( s^{*}))\), since \(s^{*}\) minimizes the regularization.
In \(n=1\) dimensions, the mesh size is given by \(h=2.4\times10^{-4}\) and in \(n=2\) dimensions by \(h=0.04\), and the number of unknowns is \(8191\) and \(3969\) respectively.
The initial guess is \(s_{0}=0.1\).
The linear solver tolerance is \(10^{-10}\), and the BFGS iteration is terminated if the norm of the gradient drops below \(10^{-8}\).
\begin{figure}
  \centering
  \includegraphics[width = 0.9\textwidth]{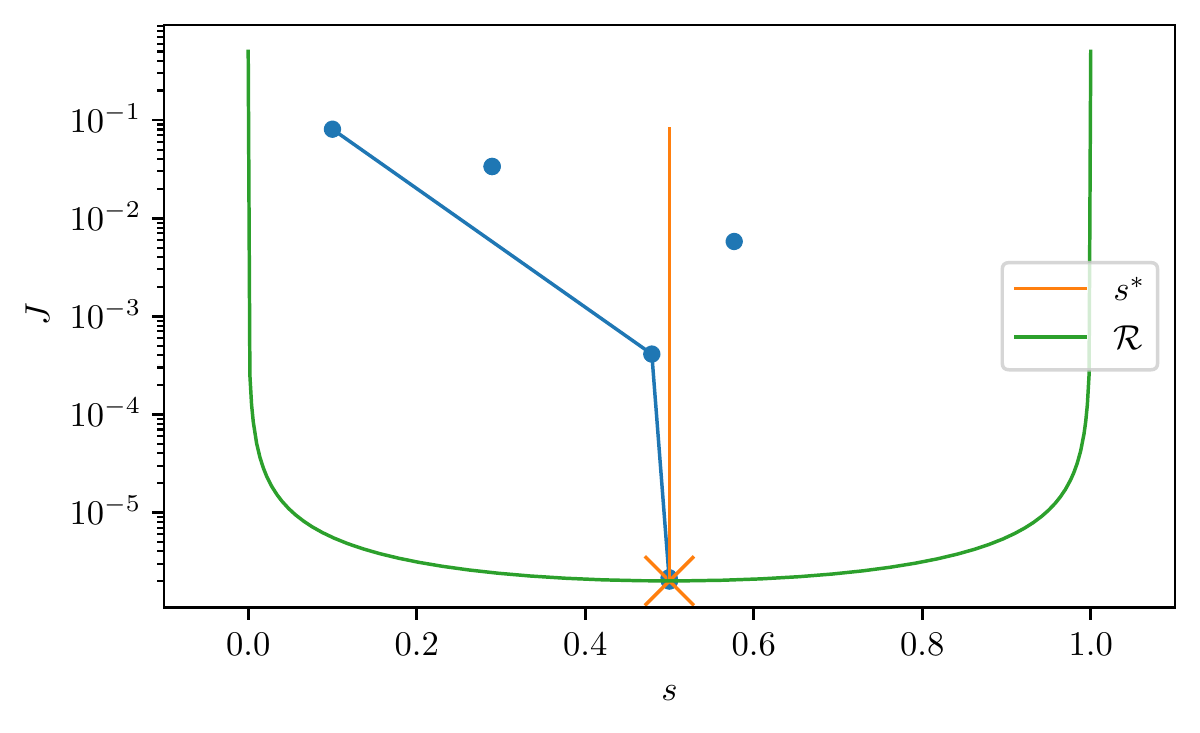}
  \includegraphics[width = 0.9\textwidth]{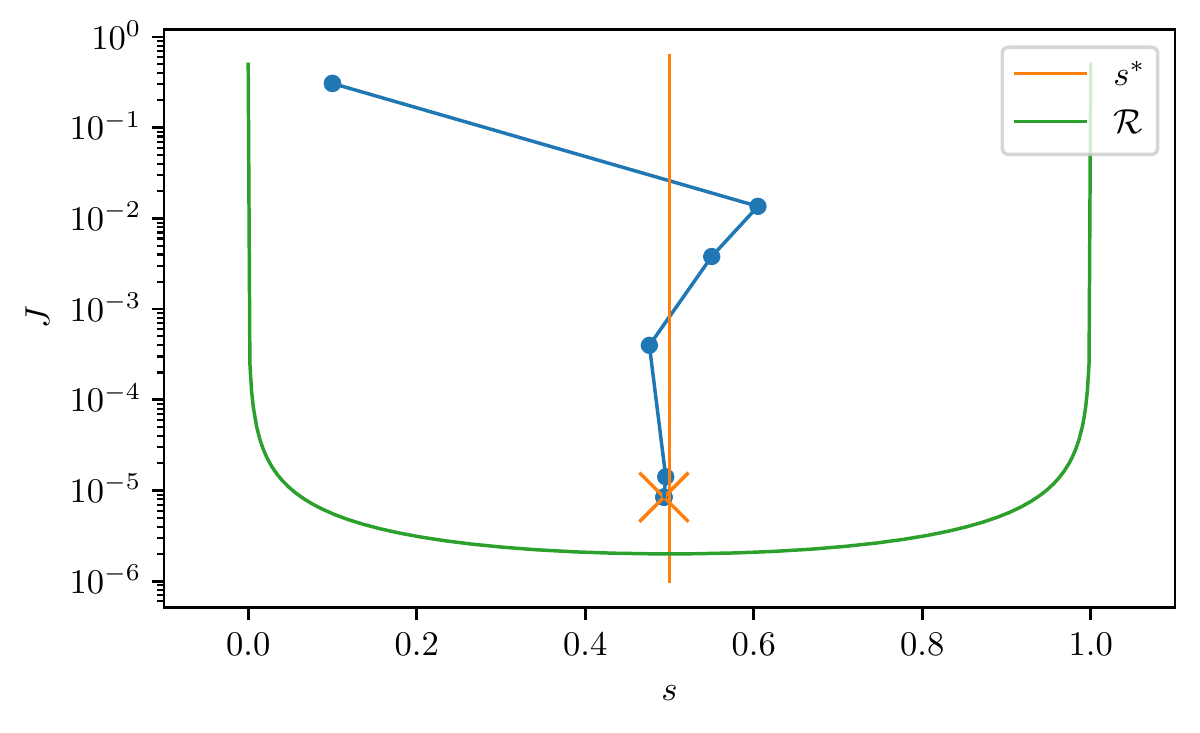}
  \caption{
    Value of the cost functional at all functional evaluations (\(\bullet\)) and for BFGS iterates (--) in 1D (\emph{top}) and 2D (\emph{bottom}) for test problem \textbf{I} and \(u_{d}=u_{ex}(0.5)\).
    The final iterate is marked with \(\times\).
  }
  \label{fig:iterates-fracLapl1}
\end{figure}
In Figure~\ref{fig:iterates-fracLapl1} we display the values \(\tilde{j}_{h}\) of the cost functional for all functional evaluations (\(\bullet\)).
In both cases, the exact value of \(s\) is recovered.

In a second example we take \(s^{*}=0.75\) and set \(u_{d}=u_{ex}(s^{*})\).
Since \(s^{*}\) no longer coincides with the minimum of the regularization, no exact solution of the minimization problem \eqref{eq:pr_min} is known.
The initial guess is again chosen to be \(s_{0}=0.1\).
\begin{figure}
  \centering
  \includegraphics[width = 0.9\textwidth]{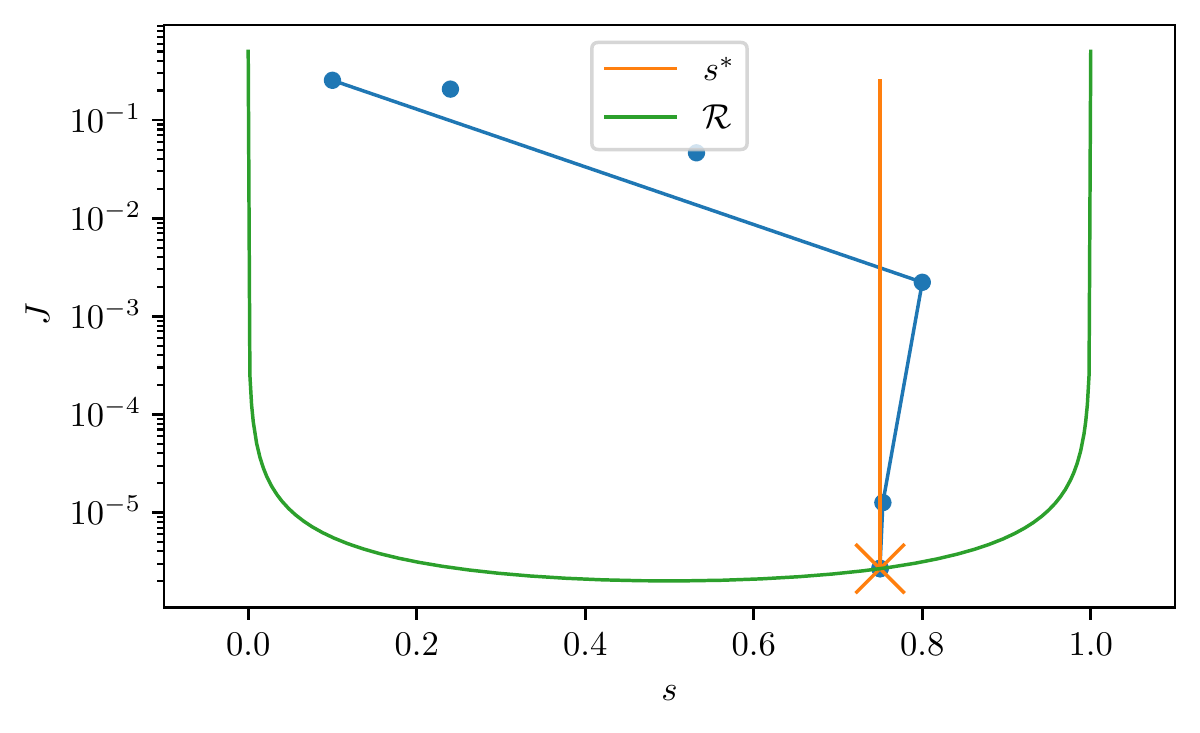}
  \includegraphics[width = 0.9\textwidth]{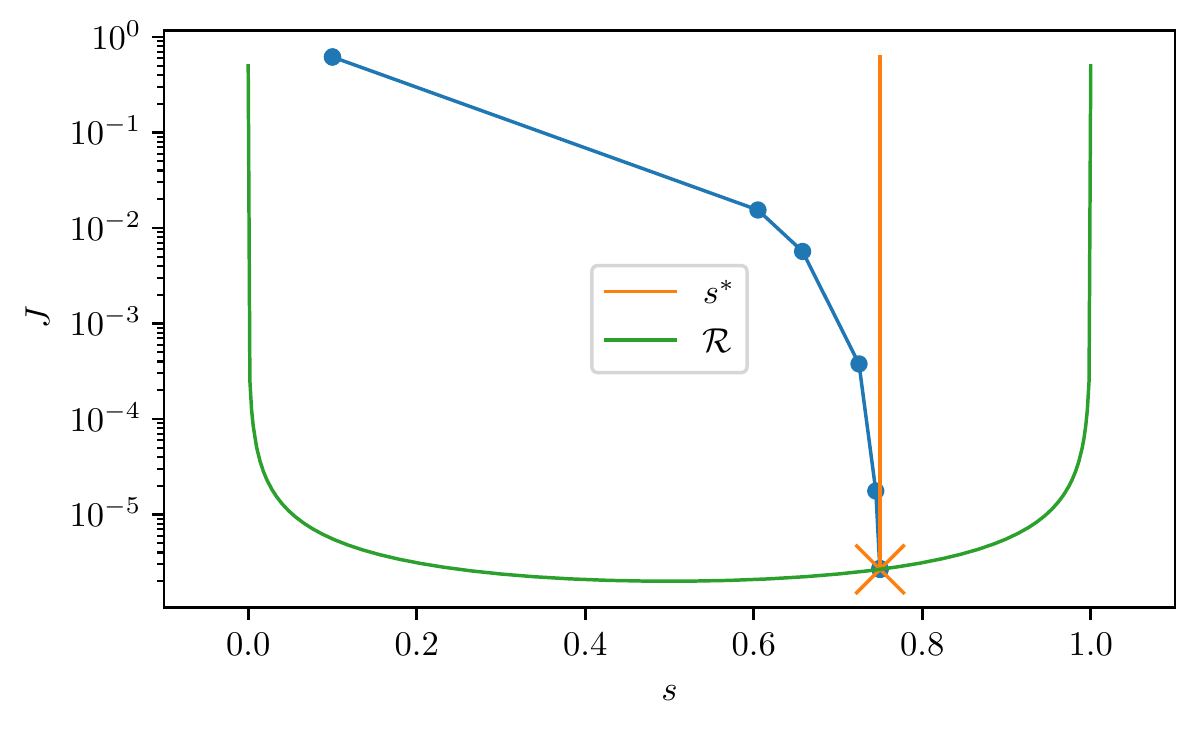}
  \caption{
    Value of the cost functional at all functional evaluations (\(\bullet\)) and for BFGS iterates (--) in 1D (\emph{top}) and 2D (\emph{bottom}) for test problem \textbf{I} and \(u_{d}=u_{ex}(0.75)\).
    The final iterate is marked with \(\times\).
  }
  \label{fig:iterates-fracLapl2}
\end{figure}
The values of the BFGS iterates and the regularization \(\mathcal{R}\) are shown in Figure~\ref{fig:iterates-fracLapl2}.
While the exact solution of the minimization problem \eqref{eq:pr_min} is unknown, it can be seen that in both cases \(s_{\text{final}}\), the final value of \(s\), is close to \(s^{*}\) and the final value of the cost functional is quite close to \(J(u_{d},s^{*})=\mathcal{R}(s^{*})\).

\subsubsection{The truncated fractional case}
\label{sec:trunc-fract-case}
In a final set of examples, we solve the identification problem for \(\q=(s,\delta)\).
We set \(\q^{*}=(s^{*},\delta^{*})=(0.75, 0.9)\) and solve the optimal control problem for \textbf{II} in \(n=1\) and \(n=2\) dimensions.
We set \(u_{d}=u_{h}(\q^{*})\) and use the regularization \(\mathcal{R}(s,\delta) = \frac{\alpha}{s(1-s)} + \beta\frac{e^{\delta}}{\delta}\) with \(\alpha=5\times 10^{-7}\) and \(\beta=10^{-6}\).
The initial guess is \(q_{0}=(0.1, 0.5)\).
\begin{figure}
  \centering
  \includegraphics[width = 0.9\textwidth]{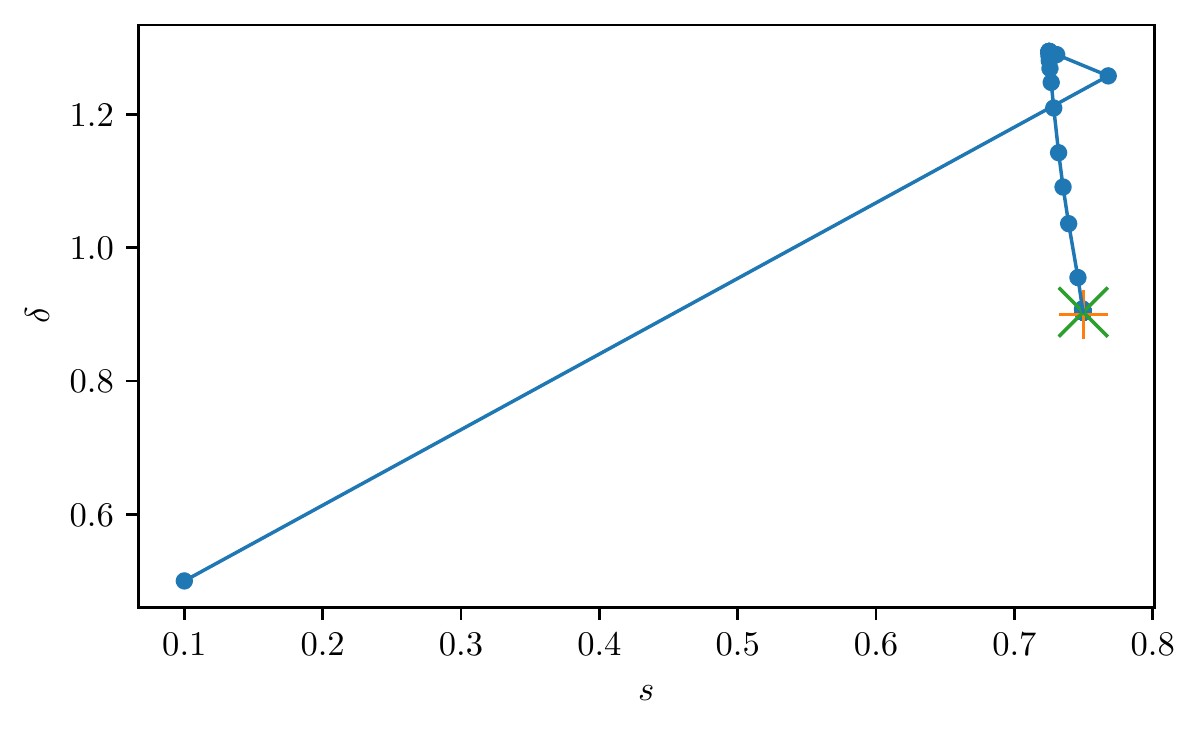}
  \includegraphics[width = 0.9\textwidth]{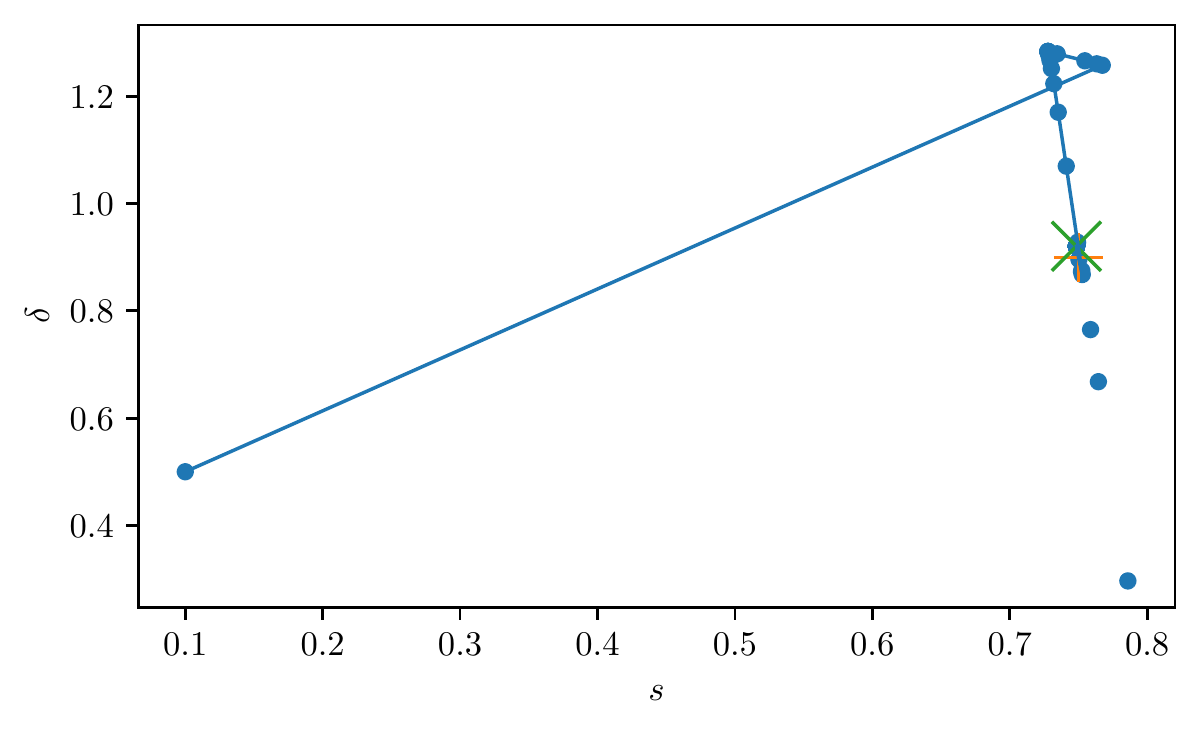}
  \caption{
    Cost functional evaluations (\(\bullet\)) and BFGS iterates (--) in 1D (\emph{top}) and 2D (\emph{bottom}) for test problem \textbf{II} and \(\mathcal{R}(s,\delta) = \frac{\alpha}{s(1-s)} + \beta\frac{e^{\delta}}{\delta}\) with \(\alpha=5\times 10^{-7}\) and \(\beta=10^{-6}\).
    The final iterate is marked with \(\times\), the parameters \(q^{*}\) used to generate the data \(u_{d}\) is marked with \(+\).
  }
  \label{fig:iterates-truncated-fracLapl}
\end{figure}
Figure~\ref{fig:iterates-truncated-fracLapl} shows the \((s,\delta)\)-values of the BFGS iterates.
We observe that the method indeed recovers \(q_{\text{final}}\) in proximity to \(q^{*}\).
\begin{figure}
  \centering
  \includegraphics[width = 0.9\textwidth]{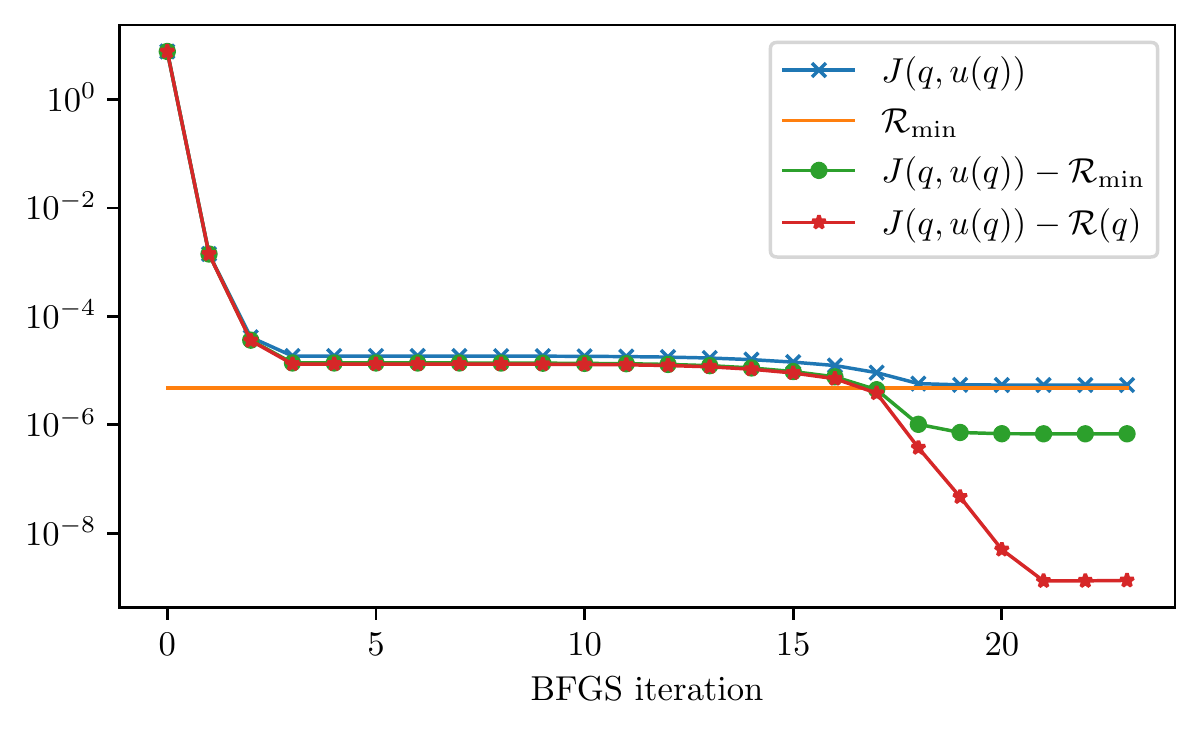}
  \includegraphics[width = 0.9\textwidth]{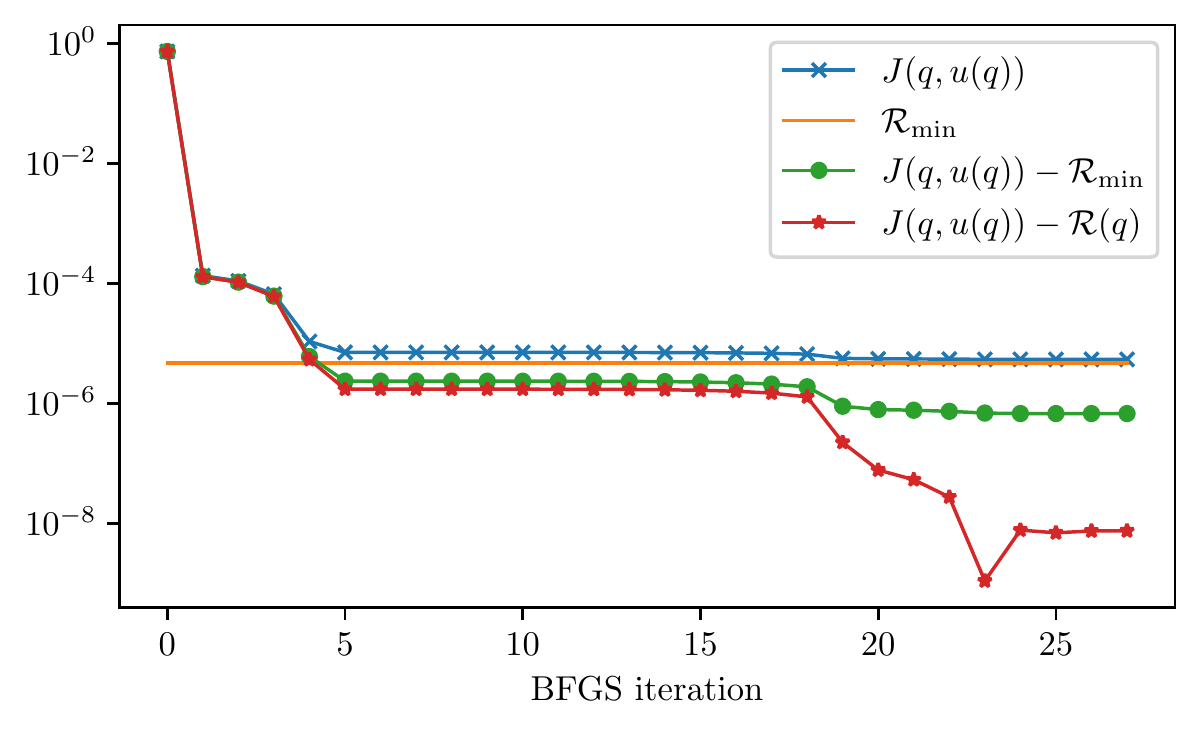}
  \caption{
    Values of the cost functional for the BFGS iterates in 1D (\emph{top}) and 2D (\emph{bottom}) for test problem \textbf{II} and \(\mathcal{R}(s,\delta) = \frac{\alpha}{s(1-s)} + \beta\frac{e^{\delta}}{\delta}\) with \(\alpha=5\times 10^{-7}\) and \(\beta=10^{-6}\).
    \(\mathcal{R}_{\min}=\min_{q}\mathcal{R}(q)\).
  }
  \label{fig:cost-truncated-fracLapl}
\end{figure}
In Figure~\ref{fig:cost-truncated-fracLapl}, we display the corresponding values of the cost functional.
We observe that \(j(q_{\text{final}})<j(q^{*})=\mathcal{R}(q^{*})\approx 5.4\times10^{-4}\).

\subsection{Convergence with respect to the mesh size}\label{sec:h-convergence}

In the same setting as in the previous section, we solve test problem \textbf{II} in \(n=1\) dimensions for a sequence of meshes with varying mesh size \(h\).
The linear solver tolerance is \(10^{-10}\), and the BFGS iteration is terminated if the norm of the gradient drops below \(10^{-8}\).
The obtained values for \(\q=(s,\delta)\), the number of BFGS iterations and the number of evaluations of the functional are shown in Table~\ref{tab:convergence}.
\begin{table}
  \centering
  \begin{tabular}{cc|cccc}
    \(h\) & \(N\) & \(s\) & \(\delta\) & iterations & evaluations \\
    \hline
    \(2^{-10}\) & 2047 & 0.74976 & 0.90306 & 24 & 77 \\
    \(2^{-11}\) & 4095 & 0.74976 & 0.90332 & 28 & 87 \\
    \(2^{-12}\) & 8191 & 0.74976 & 0.90329 & 23 & 79 \\
    \(2^{-13}\) & 16383 & 0.74975 & 0.90331 & 25 & 63 \\
    \(2^{-14}\) & 32767 & 0.74975 & 0.90331 & 23 & 73 \\
  \end{tabular}
  \caption{
    Convergence study for test problem \textbf{II} in 1D, \(\mathcal{R}(s,\delta) = \frac{\alpha}{s(1-s)} + \beta\frac{e^{\delta}}{\delta}\) with \(\alpha=5\times 10^{-7}\) and \(\beta=10^{-6}\).
    The optimal parameter values without regularization are \(s^{*}=0.75\) and \(\delta^{*}=0.9\).
  }
  \label{tab:convergence}
\end{table}
We observe that the number of required iterations barely changes as the mesh is refined.

\subsection{Convergence with respect to regularization}\label{sec:regularization}

In the same setting as in Section~\ref{sec:trunc-fract-case}, we solve test problem \textbf{II} in \(n=1\) dimensions for a sequence of regularization parameters \(\alpha\) and \(\beta\).
The obtained values for \(\q=(s,\delta)\), the number of BFGS iterations and the number of evaluations of the functional are shown in Table~\ref{tab:regularization}.
\begin{table}
  \centering
  \begin{tabular}{cc|cccc}
    \(\alpha\)& \(\beta\) & \(s\) & \(\delta\) & iterations & evaluations \\
    \hline
    5e-04 & 1e-03 & 0.73096 & 1.1373 & 24 & 67 \\
    5e-05 & 1e-04 & 0.7394 & 1.0478 & 21 & 55 \\
    5e-06 & 1e-05 & 0.7478 & 0.92978 & 25 & 74 \\
    5e-07 & 1e-06 & 0.74976 & 0.90329 & 23 & 79 \\
    5e-08 & 1e-07 & 0.74998 & 0.90034 & 25 & 29 \\
    5e-09 & 1e-08 & 0.75 & 0.90004 & 25 & 87 \\
  \end{tabular}
  \caption{
    Convergence with respect to the regularization for test problem \textbf{II} in 1D, \(\mathcal{R}(s,\delta) = \frac{\alpha}{s(1-s)} + \beta\frac{e^{\delta}}{\delta}\).
    The optimal parameter values without regularization are \(s^{*}=0.75\) and \(\delta^{*}=0.9\).
  }
  \label{tab:regularization}
\end{table}
We observe that as \(\alpha,\beta\rightarrow0\), the recovered values for \(s\) and \(\delta\) tend towards \((s^{*},\delta^{*})\).

\subsection{Noisy data}
\label{sec:noisy-data}

We again solve test problem \textbf{II} in 1D, but, as opposed to Section~\ref{sec:trunc-fract-case}, augment the data \(u_{d}\) by a pointwise normal distributed noise: \(u_{d}(\x)=u_{h}(\q^{*})(\x) + \mathcal{N}(0,\sigma^{2})\).
\begin{table}
  \centering
  \begin{tabular}{c|cccc}
    \(\sigma\)  & \(s\) & \(\delta\) & iterations & evaluations \\
    \hline
    1 & 0.8486 & 0.10847 & 35 & 105 \\
    \(2^{-1}\) & 0.79461 & 0.45007 & 32 & 136 \\
    \(2^{-2}\) & 0.76898 & 0.69291 & 28 & 67 \\
    \(2^{-3}\) & 0.75936 & 0.79458 & 27 & 69 \\
    \(2^{-4}\) & 0.75456 & 0.84792 & 23 & 71 \\
    \(2^{-5}\) & 0.75217 & 0.87517 & 20 & 69 \\
    \hline
    0 & 0.74976 & 0.90329 & 23 & 79 \\
  \end{tabular}
  \caption{
    Test problem \textbf{II} in 1D and \(\mathcal{R}(s,\delta) = \frac{\alpha}{s(1-s)} + \beta\frac{e^{\delta}}{\delta}\) with \(\alpha=5\times 10^{-7}\) and \(\beta=10^{-6}\) where the data \(u_{d}\) is pointwise disturbed by normally distributed noise with variance \(\sigma\).
    The optimal parameter values without regularization are \(s^{*}=0.75\) and \(\delta^{*}=0.9\).
  }
  \label{tab:noise}
\end{table}
We observe that as \(\sigma\rightarrow0\), both \(s\) and \(\delta\) converge towards their respective values obtained without noise.

\section{Conclusion}\label{sec:conclusion}
We presented a mathematically rigorous approach to parameter identification for nonlocal models featuring kernels of fractional type. Our careful analysis of well-posedness and regularity properties of state and adjoint equations allows for a deeper understanding of the identification problem and for the design of suitable optimization techniques. More specifically, by introducing an approximation, via interpolation, of the bilinear form and its derivative, we are able to obtain highly accurate approximations of the gradients with a nearly optimal complexity. We stress that the impact of this approximation goes beyond the scope of this paper and can potentially impact any numerical framework that involves parametrized nonlocal operators and their derivatives. As such, our approximation can be adapted to machine learning algorithms for the discovery of model parameters. 

A natural follow up of this work is to consider a higher-dimensional parameter space and compare our approach with alternative identification techniques. Nontrivial extensions of this work include the generalization to the variable horizon case and the variable fractional order case. The latter problem is particularly challenging, as regularity properties of the associated state equation have not been fully analyzed.

\bibliographystyle{siam}
\bibliography{references}
\end{document}